\documentclass[12pt,A4]{article}
\usepackage[french,english]{babel}
\usepackage{amsmath,amsfonts,amssymb,amsthm,mathrsfs,bbm}
\usepackage[font=sf, labelfont={sf,bf}, margin=1cm]{caption}
\usepackage{graphicx,graphics}
\usepackage{epsfig}
\usepackage{latexsym}
\usepackage[applemac]{inputenc}
\usepackage{ae,aecompl}
\usepackage{textcomp}
\usepackage{pstricks}
\usepackage{xfrac}
\usepackage{enumerate}
\usepackage{xcolor}
\usepackage[colorlinks=true,urlcolor=blue,citecolor=blue,citebordercolor=blue,linkcolor=blue]{hyperref}
\pagestyle{headings}
\usepackage[top=2.5cm,left=1.5cm,right=1.5cm,bottom=2.5cm]{geometry}
\usepackage{comment}
\usepackage{eulervm,palatino}

\DeclareGraphicsExtensions{.jpg,.pdf}

\newcommand{\op}[1]{\operatorname{#1 }}

\newcommand{\E}[1]{\mathbb{E} \left [ #1 \right ]}
\newcommand{\R}{\mathbb R}

\newcommand{\Z}{\mathbb Z}

\renewcommand{\P}[2]{\mathbb{P}_{#2}\left( #1 \right)}

\def\build#1_#2^#3{\mathrel{
\mathop{\kern 0pt#1}\limits_{#2}^{#3}}}

\newtheorem{theorem}{Theorem}[section]

\newtheorem{proposition}[theorem]{Proposition}
\newtheorem{lemma}[theorem]{Lemma}
\newtheorem{corollary}[theorem]{Corollary}
\newtheorem{definition}[theorem]{Definition}

\renewcommand{\P}[1]{\mathbb{P}\left[#1\right]}
\newtheorem{rek}[theorem]{Remark}

\theoremstyle{definition}

\newcommand\Es[1]{\mathbb{E}\left[#1\right]}
\renewcommand\Pr[1]{\mathbb{P}\left(#1\right)}
\newcommand\PrB[1]{\mathbb{P}_{b}\left(#1\right)}

\newcommand \GWE[1]{\mathsf{GW}_{\mu^{ \circ},\mu^{\bullet}} \left[#1\right]}
\newcommand \GWnu[1]{\mathsf{GW}_{ \nu} \left(#1\right)}
\newcommand \GWrho[1]{\mathsf{GW}_{ \rho} \left(#1\right)}
\newcommand \Esrho[1]{\mathsf{GW}_{ \rho} \left[#1\right]}
\newcommand \Esnu[1]{\mathsf{GW}_{ \nu} \left[#1\right]}

\def \D {\mathbb D}

\def \R {\mathbb R}

\def \E {\mathbb E}
\def \P {\mathbb{P}}

\newcommand \fl[1] {\left\lfloor #1 \right\rfloor}

\usepackage{enumerate}

\title{ \textsc{Percolation on random triangulations \\ and  \\ stable looptrees}}
\author{Nicolas Curien\thanks{CNRS and LPMA UPMC Paris 6, E-mail: nicolas.curien@gmail.com}  \quad \& \quad  Igor Kortchemski\thanks{DMA, École Normale Sup\'erieure, E-mail: igor.kortchemski@normalesup.org} }

\date{}

\DeclareSymbolFont{extraup}{U}{zavm}{m}{n}
\DeclareMathSymbol{\varheart}{\mathalpha}{extraup}{86}
\DeclareMathSymbol{\vardiamond}{\mathalpha}{extraup}{87}

\makeatletter
\renewcommand*{\@fnsymbol}[1]{\ensuremath{\ifcase#1\or  \vardiamond \or \clubsuit\or \spadesuit\or
   \mathsection\or \mathparagraph\or \|\or **\or \dagger\dagger
   \or \ddagger\ddagger \else\@ctrerr\fi}}
\makeatother

\begin{document}
\maketitle
\let\thefootnote\relax\footnotetext{ \\\emph{MSC2010 subject classiﬁcations}. Primary  05C80, 60J80 ; secondary 60K35.\\
 \emph{Keywords and phrases.} Random planar triangulations, percolation, Galton--Watson trees.\\
 \emph{This work is partially supported by the French “Agence Nationale de la Recherche”  ANR-08-BLAN-0190.}}
  
\abstract{We study site percolation on Angel \& Schramm's  Uniform Infinite Planar Triangulation. We compute several critical and near-critical exponents, and describe the scaling limit of the boundary of large percolation clusters in all regimes  (subcritical, critical and supercritical). We prove in particular that the scaling limit of the boundary of large critical percolation clusters is the random stable looptree of index $3/2$, which was introduced in \cite{CK13}.
We also give a conjecture linking looptrees of any index $\alpha \in (1,2)$ with scaling limits of cluster boundaries in random triangulations decorated with $O(N)$ models.}

\vfill

\begin{figure}[!h]
 \begin{center}
 \includegraphics[width=0.45 \linewidth]{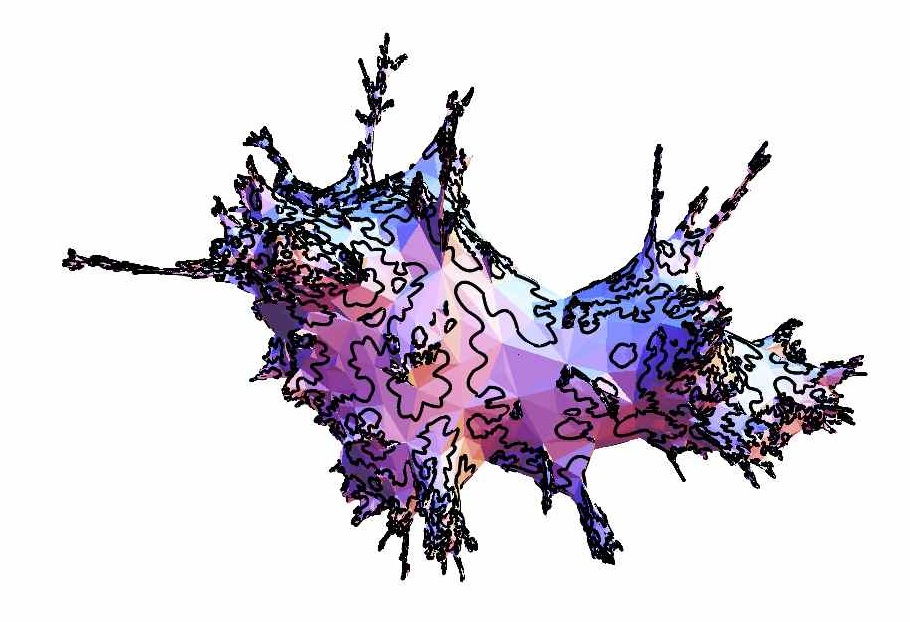}
 \caption{A site percolated triangulation and the interfaces separating the clusters.}
 \end{center}
 \end{figure}
 
 \vfill

 \clearpage

\section{Introduction}
We investigate site percolation on large random triangulations and in particular on the Uniform Infinite Planar Triangulation (in short, UIPT) which was introduced by Angel \& Schramm \cite{AS03}. In particular, we compute the critical and near-critical exponents related to the perimeter of percolation interfaces and we identify the scaling limit of the boundary of large clusters in all regimes (critical, subcritical and supercritical). In the critical case, this limit is shown to be $ \mathscr{L}_{3/2}$, the stable looptree of parameter ${3}/{2}$ introduced in \cite{CK13}. Our method is based on a surgery technique inspired from Borot, Bouttier \& Guitter and on a tree  decomposition of 
triangulations with non-simple boundary. We finally state precise conjectures linking the whole family of looptrees $( \mathscr{L}_{\alpha})_{1<\alpha<2}$ to scaling limits of cluster boundaries of random planar triangulations decorated with $O(N)$ models.

\paragraph{The UIPT.} The probabilistic theory of random planar maps and its physics counterpart, the Liouville  2D quantum gravity, is a very active field of research. See in particular the work of Le Gall and Miermont on scaling limits of large random planar maps and the Brownian map \cite{LG11,Mie11}. The goal is to understand universal large-scale properties of random planar graphs or maps.
One possible way to get information about the geometry of these random lattices is to understand the behavior of (critical) statistical mechanics models on them. In this paper, we focus on one of the simplest of such models: site percolation on random triangulations.  

Recall that a triangulation is a proper embedding of a finite connected graph in the two-dimensional sphere, considered up to orientation-preserving homeomorphisms, and such that all the faces have degree $3$. We only consider \emph{rooted} triangulations, meaning that an oriented edge is distinguished and called the root edge. Note that we allow loops and multiple edges.
We write $ \mathbb{T}_{n}$ for the set of all rooted triangulations with $n$ vertices, and let $T_{n}$ be a random triangulation chosen uniformly at random among $ \mathbb{T}_{n}$. Angel \& Schramm \cite{AS03} have introduced an infinite random planar triangulation $T_{\infty}$, called the Uniform Infinite Planar Triangulation (UIPT), which is obtained as the local limit of $T_{n}$ as $ n \rightarrow \infty$. More precisely, $T_{\infty}$ is characterized by the fact that for every $r \geq 0$ we have the following convergence in distribution
 \begin{eqnarray} \label{eq:charT}B_{r}(T_{n}) & \xrightarrow[n\to\infty]{(d)} & B_{r}(T_{\infty}),  \end{eqnarray}
where $B_r(m)$ is the map formed by the edges and vertices of $m$ that  are at
graph distance smaller than or equal to $r$ from the origin of the root edge.  This infinite random triangulation and its quadrangulation analog (the UIPQ, see \cite{CD06,Kri05}) have attracted a lot of attention, see \cite{BCsubdiffusive,CurKPZ,GGN12} and the references therein. 

\paragraph{Percolation.} Given  the UIPT, we consider a site percolation by coloring its vertices independently white with probability $a\in (0,1)$ and black with probability $1-a$. This model has already been studied by Angel \cite{Ang03}, who proved that the critical threshold  is almost surely equal to $$a_{c} =   {1}/{2}.$$ His approach was based on a clever Markovian exploration of the UIPT called the peeling process. See also \cite{ACpercopeel,CurKPZ,MN13} for further studies of  percolation on random maps using the peeling process.  \medskip

In this work, we are interested in the geometry of \emph{the boundary of percolation clusters} and use a different approach. We condition the root edge of the UIPT on being of the form $\circ \to \bullet$, which will allow us to define the percolation interface going through the root edge. The white cluster of the origin is by definition the set of all the white vertices and edges between them that can be reached from the origin of the root edge using white vertices only.  We denote by $ \mathcal{H}^{\circ}_{a}$ its hull, which is obtained by filling-in the holes of the white cluster except the one containing the target of the root edge called the exterior component (see Fig.~\ref{percoex-site} and Section \ref{sec:necklace} below for a precise definition). Finally,  we denote by  $ \partial  \mathcal{H}_{a}^\circ$ the boundary of the hull, which is the graph formed by the edges and vertices of  $\mathcal{H}^{\circ}_{a}$ adjacent to the exterior (see Fig.~\ref{percoex-site}), and let $ \#\partial  \mathcal{H}_{a}^\circ$ be its perimeter, or length, that is the number of half edges of $\partial  \mathcal{H}_{a}^\circ$ belonging to the exterior. Note that  $  \partial \mathcal{H}^{\circ}_{a}$ is formed of discrete cycles attached by some pinch-points.  It follows from the work of Angel \cite{Ang03} that, for every value of $a \in (0,1)$, the boundary $\partial \mathcal{H}_{a}^\circ$ is always \emph{finite} (if an infinite interface separating a black cluster from a white cluster existed, this would imply the existence of both  infinite black and white clusters, which is intuitively not posible).

One of our contributions is to find the precise asymptotic behavior for the probability of having a large perimeter in the critical case.

\begin{figure}[!h]
 \begin{center}
 \includegraphics[width=0.8 \linewidth]{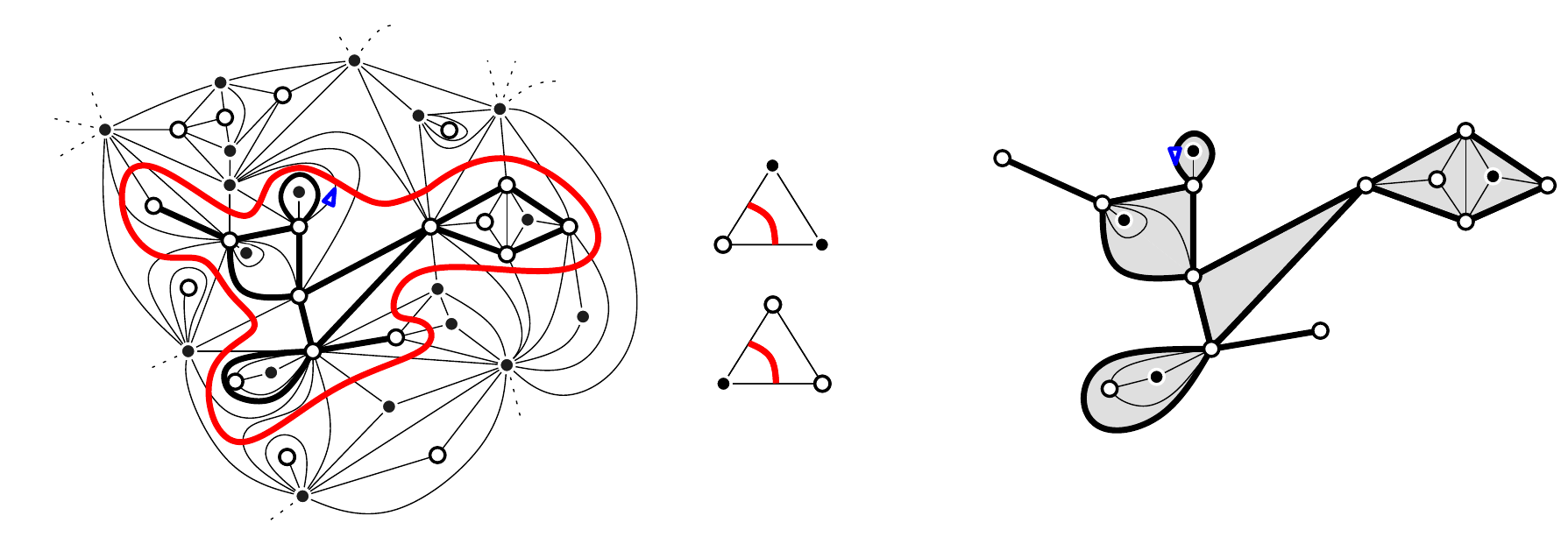}
 \caption{\label{fig:clusterloop}On the left, a part of a site-percolated triangulation with the interface going through the root edge and the hull of the cluster of the origin. The interface is drawn using the rules displayed in the middle triangles. On the right, the boundary of the hull is in bold black line segments and has perimeter $16$.\label{percoex-site}} 
 \end{center}
 \end{figure}

 \begin{theorem}[Critical exponent for the perimeter] \label{thm:expo}For $a= a_{c}=1/2$ we have 
 $$ \Pr{ \# \partial \mathcal{H}^{ \circ}_{\text{\sfrac{1}{2}}} = n} \quad  \underset{n \to \infty}{\sim} \quad \frac{ {3}}{ {2} \cdot |\Gamma(-{2}/{3})|^3} \cdot n^{-4/3},$$
 where $ \Gamma$ is Euler's Gamma function.
 \end{theorem}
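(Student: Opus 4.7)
The approach I would take combines two ingredients visible in the paper's announced methodology: a \emph{tree-decomposition} of the non-simple boundary $\partial\mathcal{H}^\circ_{1/2}$ into its simple cycles, and a \emph{surgery} along the percolation interface in the spirit of Borot--Bouttier--Guitter. I would first encode $\partial\mathcal{H}^\circ_{1/2}$ by a plane tree $\tau$ whose vertices correspond to the simple cycles of the necklace and whose edges correspond to the pinch points, so that $\#\partial\mathcal{H}^\circ_{1/2}$ becomes an explicit functional of $\tau$ decorated by its cycle lengths.

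Next, cutting the UIPT along the percolation interface produces an inner region (filling the hull) and an outer region (containing the target of the root edge). For each simple cycle of length $\ell$, the enumeration of triangulations with simple boundary of length $\ell$ (a Brown/Krikun-type count of order $\lambda_c^{-\ell}\,\ell^{-5/2}$ at the critical fugacity) combines with the Bernoulli percolation weights $(1/2)^{\#\circ + \#\bullet}$. At $a_c = 1/2$ these inputs should conspire so that $\tau$, together with its cycle-length decoration, is distributed as an explicit critical Galton--Watson tree whose offspring distribution lies in the domain of attraction of a $3/2$-stable law; this is consistent with, and is the source of, the scaling limit $\mathscr{L}_{3/2}$ announced in the abstract.

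Given this explicit description, the tail $\Pr{\#\partial\mathcal{H}^{\circ}_{1/2} = n} \sim c\,n^{-4/3}$ reduces to a singularity-analytic statement. Let $F(x) = \sum_n \Pr{\#\partial\mathcal{H}^{\circ}_{1/2} = n}\,x^n$; the tree decomposition gives a functional equation of the form $F(x) = x\,\phi(F(x))$, with $\phi$ encoding the single-cycle weights. The $3/2$-stable tail of the underlying offspring distribution forces an explicit singular expansion of $\phi$ near its radius of convergence of the form $\phi(x_c) - \phi(x) = c_0(x_c - x) + c_1 (x_c - x)^{3/2} + \cdots$; Lagrange inversion together with a standard transfer theorem then yield the claimed polynomial rate $n^{-4/3}$ for $F$, and the precise prefactor $\frac{3}{2|\Gamma(-2/3)|^3}$ is recovered by tracking the multiplicative constants through the inversion and applying the reflection identity $\Gamma(\alpha)\Gamma(1-\alpha) = \pi/\sin(\pi\alpha)$ at $\alpha = 2/3$.

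The main obstacle will be the identification of the joint law of $\tau$ and its cycle lengths: one must leverage the spatial Markov property of the UIPT for the interface peeling, and carefully separate the contributions of the hull, of the unique infinite exterior component (which carries the local-limit conditioning of the UIPT), and of the finite black and white bubbles attached at pinch points. In particular, one must verify that the combinatorial and percolation weights factorize cleanly across the nodes of $\tau$, and that the resulting offspring law is \emph{exactly} critical precisely at $a_c = 1/2$. This exact criticality is the analytic heart of the argument: it is responsible both for the emergence of the $3/2$-stable scaling of the looptree limit and for the sharp constant $\frac{3}{2|\Gamma(-2/3)|^3}$ in the tail.
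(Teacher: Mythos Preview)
Your strategy correctly isolates the two structural inputs --- the tree of simple components and the necklace surgery --- and the fact that at $a_c=1/2$ the associated Galton--Watson tree is critical with offspring tail in the $3/2$-stable domain. The gap is in the final analytic step. A functional equation $F(x)=x\,\phi(F(x))$ encodes the \emph{total-progeny} generating function of a single Galton--Watson tree; for a critical law with $3/2$-stable tail, Lagrange inversion and transfer give coefficients of order $n^{-1-1/\alpha}=n^{-5/3}$, not $n^{-4/3}$. What your scheme actually computes is (up to exponential factors) the Boltzmann mass $Q_{1/2}(\{T:\#\partial T=n\})$, and in the paper that quantity is indeed shown to behave like $n^{-5/3}$.

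The missing $n^{1/3}$ comes precisely from the feature you flagged as ``the main obstacle'' but then did not build into the computation: on the UIPT one of the two hulls is \emph{infinite}. After surgery, the joint law factorises as
\[
\Pr{\mathcal{H}^\circ_{1/2}=h,\ \#\partial\mathcal{H}^\bullet_{1/2}=m}\ \propto\ 12^{n}\,Q_{1/2}(h)\cdot\binom{n+m}{n}\cdot 12^{m}\,K_{1/2}(m),
\]
where $K_{1/2}(m)$ is the infinite-volume coefficient of the black side, obtained not from $Q_{1/2}$ but from a size-biased functional $\mathsf{GW}\big[\sum_{u\in\bullet(\tau)}\phi(k_u)\,\mathbbm{1}_{|\tau|=m+1}\big]$. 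Its renormalised asymptotic is $24^{m}K_{1/2}(m)\asymp m^{1/3}$, not $m^{-5/3}$. Summing over $m$ with the necklace weight $\binom{n+m}{n}2^{-n-m}$ (which concentrates around $m\approx n$) then gives $\Pr{\#\partial\mathcal{H}^\circ_{1/2}=n}\asymp n^{-5/3}\cdot n^{1/3}=n^{-4/3}$, with the constant $\tfrac{3}{2|\Gamma(-2/3)|^{3}}$ emerging from the two separate constants and the Gaussian limit of the binomial. This biased convolution does not fit into a single implicit equation $F=x\,\phi(F)$; you need \emph{two} distinct asymptotics --- $\tilde Q_n\sim c\,n^{-5/3}$ for the finite side and $\tilde K_n\sim c'\,n^{1/3}$ for the infinite side --- together with the necklace sum, to reach the exponent $-4/3$.
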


It is interesting to mention that the exponent $4/3$ for the perimeter of the boundary of critical clusters also appears when dealing with the half-plane model of the UIPT: using the peeling process, it is shown in \cite{ACpercopeel} that $ P( \# \partial \mathcal{H}_{a_{c}}^\circ > n) \asymp n^{-1/3}$, where  $a_{n} \asymp b_{n}$ means that the sequence $a_{n}/b_{n}$ is bounded from below and above by certain constants.

The main idea used to establish Theorem \ref{thm:expo} is a tree representation of the $2$-connected components of $  \partial \mathcal{H}_{a}^{\circ}$, which we prove to be closely related to the law of a certain two-type Galton--Watson tree. We reduce the study of this two-type random tree to the study of a standard one-type Galton--Watson tree by using a recent bijection due to Janson \& Stef\'ansson \cite{JS12},  which enables us to use the vast literature on random trees and branching processes to make exact computations.

This method also allows us to fully understand the probabilistic structure of the hull of the white cluster and to identify the scaling limits (for the Gromov--Hausdorff topology) in any regime (subcritical, critical and supercritical)  of $\partial \mathcal{H}_{a}^{\circ}$, seen as a compact metric space, when its perimeter tends to infinity. In particular, we establish that the scaling limit of $\partial \mathcal{H}_{a_{c}}^{\circ}$ conditioned to be large, appropriately rescaled, is the stable looptree of parameter $ 3/2$ introduced in \cite{CK13}, whose definition we now recall.

\paragraph{Stable looptrees.} Random stable looptrees are random compact metric spaces and can, in a certain sense, be seen as the dual of the stable trees introduced and studied in \cite{DLG02,LGLJ98}. They are constructed in \cite{CK13} using stable processes with no negative jumps, but can also be defined as scaling limits of discrete objects: With every rooted oriented tree (or plane tree) $ \tau$, we  associate a graph, called the discrete looptree of $ \tau$ and denoted by $ \mathsf{Loop}( \tau)$,  which is the graph on the set of vertices of $\tau$ such that two vertices $u$ and $v$ are joined by an edge if and only if one of the following three conditions are satisfied in $ \tau$: $u$ and $v$ are consecutive siblings of a same parent, or $u$ is the first sibling (in the lexicographical order) of $v$, or $u$ is the last sibling of $v$, see Fig.~\ref{fig:loop}. Note that in \cite{CK13},  $\mathsf{Loop}( \tau)$ is defined as a different graph, and that here $\mathsf{Loop}( \tau)$ is the graph which is denoted by $\mathsf{Loop}'( \tau)$ in \cite{CK13}. We view $ \mathsf{Loop}( \tau)$ as a compact metric space by endowing its vertices with the graph distance (every edge has unit length).

 \begin{figure}[!h]
 \begin{center}
 \includegraphics[width=0.65  \linewidth]{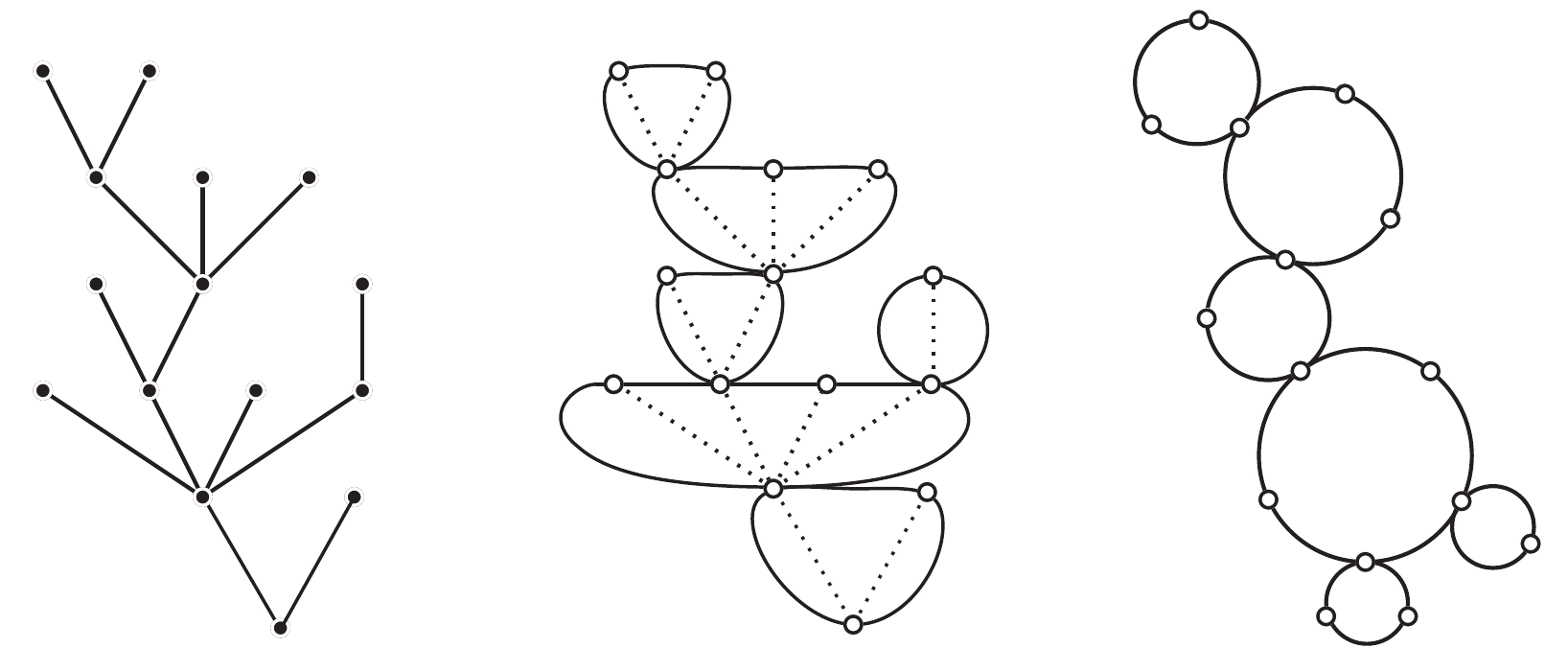}
 \caption{ \label{fig:loop}A plane tree $\tau$ (left) and its looptree $ \mathsf{Loop}( \tau)$ (middle and right).}
 \end{center}
 \end{figure} 
  Fix $ \alpha \in (1,2)$.  Now let $\tau_{n}$ be a Galton--Watson tree conditioned on having $n$ vertices, whose offspring distribution $\mu$ is critical and satisfies $\mu_{k} \sim c \cdot k^{-1-\alpha}$ as $k \rightarrow \infty$ for a certain $c>0$.   In \cite[Section 4.2]{CK13}, it is shown that there exists a random compact metric space $\mathscr{L}_{\alpha}$, called the stable looptree of index $ \alpha$, such that 
\begin{eqnarray} \label{eq:invprinc}  n^{-1/\alpha} \cdot \mathsf{Loop}( \tau_{n}) & \quad \xrightarrow[n\to\infty]{(d)} \quad &  \left(c |\Gamma(- \alpha)| \right)^{-1/ \alpha} \cdot \mathscr{L}_{\alpha}, \end{eqnarray}
where the convergence holds in distribution for the Gromov--Hausdorff topology and where $c \cdot M$ stands for the metric space obtained from $M$ by multiplying all distances by $c >0$. Recall that the Gromov--Hausdorff topology gives a sense to the convergence of (isometry classes) of compact metric spaces, see Section \ref{sec:GH} below for the definition.

 \begin{figure}[!h]
 \begin{center}
  \includegraphics[height=5cm]{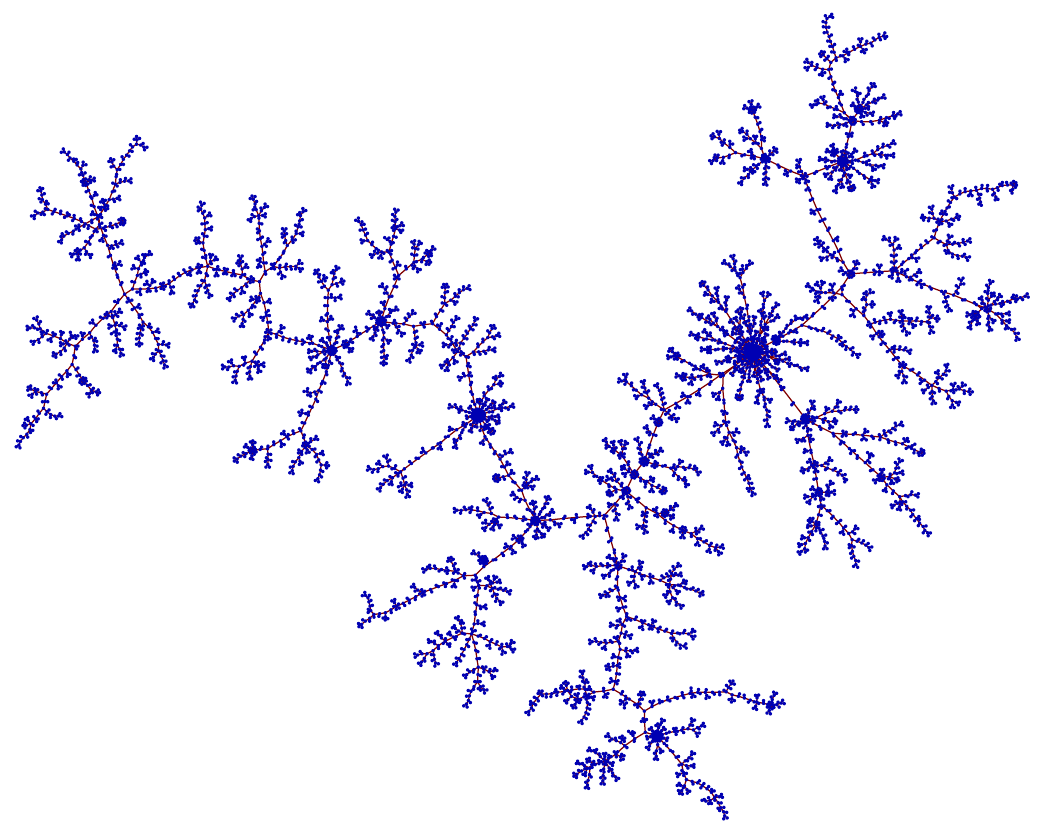}
    \includegraphics[height=4cm]{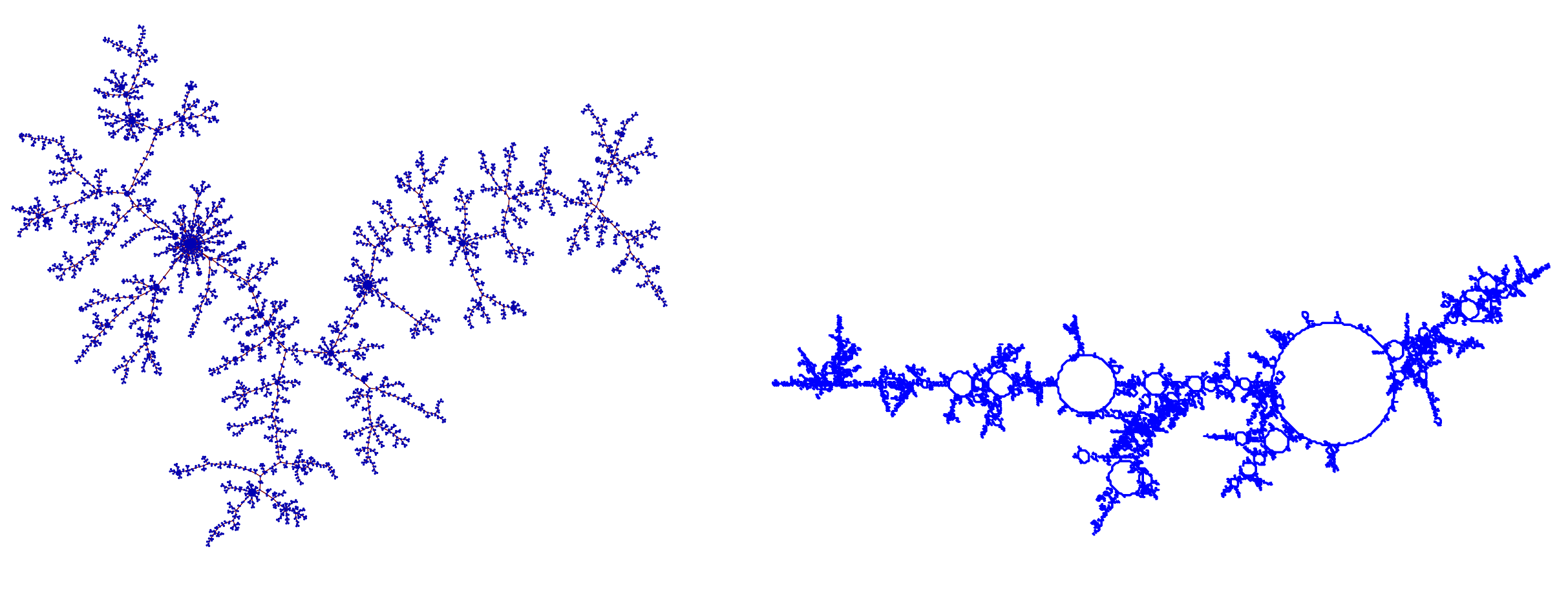}
 \caption{ \label{fig:frerots} An $ \alpha={3}/{2}$ stable tree, and its {associated}
 looptree {$\mathscr{L}_{3/2}$}, embedded non isometrically and non properly in the plane.}
 \end{center}
 \end{figure}
 
It has been proved in \cite{CK13} that the Hausdorff dimension of $ \mathscr{L}_{ \alpha}$ is almost surely equal to $\alpha$. Furthermore, the stable looptrees can be seen as random metric spaces interpolating between the unit length circle $ \mathcal{C}_{1}:=\frac{1}{2\pi} \cdot \mathbb{S}_{1}$ and  Aldous' Brownian CRT  \cite{Ald91a} (which we view here as the tree $ \mathcal{T}_{ \mathbf{e}}$ coded by a normalized Brownian excursion $ \mathbf{e}$, see \cite{LG05}). We are now in position to describe the possible scaling limits of the boundary of percolation clusters in the UIPT.  For fixed $a \in (0,1)$, let $ \partial \mathcal{H}_{a}^{\circ}(n)$ be the boundary of the white hull of the origin conditioned on the event that $ \mathcal{H}^{\circ}_{a}$ is finite and that the perimeter of $ \partial  \mathcal{H}^{\circ}_{a}$ is $n$. We view $  \partial \mathcal{H}_{a}^{\circ}(n)$ as a compact metric space by endowing its vertices with the graph distance (every edge has unit length).

 \bigskip

 \begin{theorem}[Scaling limits for $\partial \mathcal{H}^{ \circ}_{a}$ when $ \# \mathcal{H}_{a}^\circ < \infty$] \label{thm:scalingperco} For every $a \in (0,1)$, there exists a positive constant $C_{a}$ such that  the following convergences hold in distribution for the Gromov--Hausdorff topology:
$$\begin{array}{cllcl} (i) &  \mbox{when }1/2<a<1,&   n^{-1} \cdot \partial \mathcal{H}_a^{\circ}(n) &\xrightarrow[n\to\infty]{(d)}&  C_{a} \cdot \mathcal{C}_{1}, \\ \ \\
(ii) & \mbox{when }a=a_{c}=1/2,&  n^{-2/3} \cdot\partial \mathcal{H}_a^{\circ}(n)  &\xrightarrow[n\to\infty]{(d)}&  3^{1/3}  \cdot \mathscr{L}_{3/2}, \\ \ \\ 
(iii) & \mbox{when }0<a<1/2, &  n^{-1/2} \cdot \partial \mathcal{H}_a^{\circ}(n) &\xrightarrow[n\to\infty]{(d)}& C_a\cdot  \mathcal{T}_{ \mathbf{e}}.  \end{array}$$
See Theorem \ref{thm:nearcrit} below for more details about the constants $C_{a}$.
\end{theorem}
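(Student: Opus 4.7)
The plan is to reduce the scaling limit of $\partial \mathcal{H}_a^\circ(n)$ to that of a discrete looptree built on a conditioned one-type Galton--Watson tree, and then to invoke three distinct scaling-limit regimes. Building on the necklace decomposition of $\partial \mathcal{H}_a^\circ$ into $2$-connected cycles that is developed earlier in the paper, I would encode the genealogy of these cycles as a two-type Galton--Watson tree and apply the Janson--Stef\'ansson bijection \cite{JS12} to convert it into a one-type Galton--Watson tree $\tau^{(a)}$ with an explicit offspring law $\mu^{(a)}$. Under this correspondence, $\partial \mathcal{H}_a^\circ$ is isometric, up to a uniformly bounded additive distortion, to the discrete looptree $\mathsf{Loop}(\tau^{(a)})$, and conditioning on $\#\partial \mathcal{H}_a^\circ = n$ becomes a conditioning of $\tau^{(a)}$ on a size parameter of order $n$.

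Next I would identify the tail behavior of $\mu^{(a)}$ in each regime. At $a=1/2$, explicit computation (already used for Theorem \ref{thm:expo}) should give $\mu^{(1/2)}$ critical with $\mu^{(1/2)}(k) \sim c \cdot k^{-5/2}$, so the framework of \eqref{eq:invprinc} applies with $\alpha=3/2$ and produces the constant $3^{1/3}$ after matching $c$ with the normalization of $\mathscr{L}_{3/2}$. For $a<1/2$ (subcritical cluster regime), $\mu^{(a)}$ should be critical with \emph{finite variance} $\sigma_a^2$, and Aldous's invariance principle yields $n^{-1/2}\tau^{(a)}_n \to \mathcal{T}_{\mathbf{e}}$ for the Gromov--Hausdorff topology. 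For $a>1/2$ (supercritical cluster regime), the law $\mu^{(a)}$ should have mean strictly less than one while retaining the polynomial $k^{-5/2}$ tail inherited from the underlying triangulation.

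The three scaling limits for $\mathsf{Loop}(\tau^{(a)}_n)$ then follow. Case (ii) is a direct application of \eqref{eq:invprinc}. For case (iii), since the degrees in $\tau^{(a)}_n$ remain tight, each cycle in $\mathsf{Loop}(\tau^{(a)}_n)$ has bounded length and the looptree lies at bounded Gromov--Hausdorff distance from the tree itself, so $n^{-1/2}\mathsf{Loop}(\tau^{(a)}_n)$ inherits the Brownian CRT limit with an explicit constant $C_a$ depending on $\sigma_a$. For case (i) the heavy-tailed subcritical offspring law should trigger \emph{condensation}: conditioned on size $n$, $\tau^{(a)}_n$ concentrates around a tree with a unique vertex of degree $n(1+o(1))$ bearing small independent $\mu^{(a)}$-subtrees, and $\mathsf{Loop}(\tau^{(a)}_n)$ is then dominated by a single macroscopic cycle of length $\sim n$ carrying only tiny decorations. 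I expect the main obstacle to be precisely this supercritical case: upgrading the Jonsson--Stef\'ansson / Kortchemski type condensation statement into a Gromov--Hausdorff statement requires uniform control on the heights of \emph{all} subtrees hanging off the condensation vertex, so that they vanish at scale $n$ and only the macroscopic cycle, rescaled to $\mathcal{C}_1$, survives in the limit.
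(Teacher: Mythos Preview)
Your overall architecture matches the paper's: reduce $\partial\mathcal{H}_a^\circ(n)$ to the looptree of a one-type Galton--Watson tree $\tau_a^n$ via the Janson--Stef\'ansson bijection, then treat the three regimes separately. Case~(ii) and case~(i) are essentially right, and your worry about case~(i) is resolved in the paper by citing \cite[Corollary~2]{K12+}, which controls the maximal \emph{size} (not height) of the components of $\tau_a^n$ with the condensation vertex removed; since those sizes are $o(n)$, the decorations on the macroscopic loop vanish at scale $n$.

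The genuine gap is in case~(iii). Your prediction that for $a<1/2$ the one-type offspring law $\nu_a$ is ``critical with finite variance'' is wrong on both counts. A direct computation from \eqref{eq:nua} gives $F_a'(1)=(1+2(a-1/2)/\sqrt{3})^{-1}>1$ for $a<1/2$, so $\nu_a$ is \emph{supercritical}; and since $\nu_a(k)=\frac{2}{r_c(\gamma+2a)}q_k$ for $k\ge1$ with $q_k\sim c\,k^{-5/2}$, the tail is polynomial and the variance is infinite for every $a\in(0,1)$. The paper's fix is an exponential tilting: one chooses $\lambda_a\in(0,1)$ so that the tilted law $\widetilde{\nu}_a(k)\propto\lambda_a^k\nu_a(k)$ is critical; this law then has small exponential moments (hence finite variance $\widetilde{\sigma}_a^2$), and the conditioned tree $\tau_a^n$ has the same distribution under $\nu_a$ and $\widetilde{\nu}_a$.

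Even after tilting, your heuristic ``the looptree lies at bounded Gromov--Hausdorff distance from the tree itself'' does not give the limit. The looptree of a critical finite-variance Galton--Watson tree rescaled by $n^{-1/2}$ does converge to a multiple of $\mathcal{T}_{\mathbf{e}}$, but the constant is not simply $2/\widetilde{\sigma}_a$: the paper invokes \cite[Theorem~14]{CHK13}, which yields the constant $\frac{1}{2\widetilde{\sigma}_a}\big(\widetilde{\sigma}_a^2+\widetilde{\nu}_a(2\Z_+)\big)$. So Aldous's invariance principle for the tree alone is not enough; you need the looptree version.
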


Although Theorem \ref{thm:scalingperco} does not imply that $1/2$ is the critical threshold for percolation on the UIPT (as shown in \cite{Ang03}), it is  a compelling evidence for it.
Let us give a heuristic justification for the three limiting compact metric spaces appearing in the statement of this theorem. Imagine that we condition the cluster of the origin to  be finite and have a very large, but finite, boundary. In the supercritical regime $(i)$, as soon as  the cluster grows arms it is likely to become infinite, hence the easiest way to stay finite is to look like a loop. On the contrary, in the subcritical regime $(iii)$, having a large boundary costs a lot, so  the cluster adopts the shape which maximizes its boundary length for fixed size: the tree. In the critical case $(ii)$, these effects are balanced and a fractal object emerges: not quite a loop, nor a tree, but a looptree!

The proof of Theorem \ref{thm:scalingperco} gives the expression of $C_{a}$ in terms of certain quantities involving Galton--Watson trees. This allows us to obtain the following near-critical scaling behavior.

\begin{theorem}[Near-critical scaling constants] \label{thm:nearcrit}The constants $C_{a}$ satisfy the following near-critical asymptotic behavior:
$$C_{a}  \underset{ a \downarrow 1/2}{\sim} \frac{ 2}{\sqrt{3}} \cdot \left(a-\frac{1}{2} \right) \qquad \textrm{and} \qquad C_{a}\underset{a \uparrow 1/2}{ \sim}  \displaystyle  \frac{3^ {3/4}}{8} \cdot  \left( \frac{1}{2}-a \right)^{-1/2}.$$
\end{theorem}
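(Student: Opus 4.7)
The plan is to leverage the explicit description of $C_a$ furnished by the proof of Theorem~\ref{thm:scalingperco}: in each off-critical regime, $C_a$ comes out as a specific moment functional of the offspring distribution $\mu^{(a)}$ of the Galton--Watson tree that, via the Janson--Stef\'ansson bijection, encodes the $2$-connected structure of $\partial \mathcal{H}_a^\circ$. The task then reduces to computing the leading-order behaviour of these moments as $a \to 1/2$, starting from the explicit expression of $\mu^{(a)}$ (itself read off the enumeration of bicoloured triangulations with boundary).

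\textbf{The case $a \downarrow 1/2$.} In this regime $\mu^{(a)}$ turns subcritical with mean $m(a) < 1$, and the circular scaling limit $\mathcal{C}_1$ is the signature of a condensation phenomenon: conditionally on having $n$ vertices, the tree features a single vertex absorbing $\sim (1-m(a))\,n$ offspring, which yields one macroscopic cycle of length $\sim (1-m(a))\,n$ in $\mathsf{Loop}(\tau_n)$ while all other cycles remain of order $1$. Chasing this through the proof of Theorem~\ref{thm:scalingperco} gives $C_a$ as a fixed constant multiple of $1-m(a)$, so it only remains to establish $1 - m(a) = \tfrac{2}{\sqrt 3}(a - \tfrac12) + o(a - \tfrac12)$. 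This is obtained by differentiating the generating function $G_a(x) = \sum_k x^k \mu^{(a)}_k$ at $x=1$; the identity $m(1/2)=1$ guarantees that the computation reduces to a closed-form expression which one then evaluates.

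\textbf{The case $a \uparrow 1/2$.} Here $\mu^{(a)}$ is critical with finite variance $\sigma^2(a)$, and the Brownian CRT limit follows from Aldous' invariance principle applied to $\tau_n$. A standard spine/looptree computation shows that $\mathsf{Loop}(\tau_n)$, rescaled by $\sqrt{n}$, converges to a constant multiple of $\mathcal{T}_{\mathbf e}$ proportional to $\sqrt{\sigma^2(a)}$, so $C_a$ is itself proportional to $\sqrt{\sigma^2(a)}$. Since $\mu^{(1/2)}$ has infinite second moment (its tail being of the stable index $3/2$ type), $\sigma^2(a)$ blows up as $a \uparrow 1/2$; a direct Tauberian/Abelian computation on $G_a''(1) = \sum_k k(k-1)\mu^{(a)}_k$ gives the precise rate of divergence, and hence the announced asymptotic $C_a \sim \frac{3^{3/4}}{8}(1/2-a)^{-1/2}$.

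\textbf{Main obstacle.} The only genuinely delicate part is the bookkeeping required to nail down the exact multiplicative constants $2/\sqrt{3}$ and $3^{3/4}/8$. This means tracking precise normalizations through the whole chain --- generating series of $\mu^{(a)}$, disk partition functions of bicoloured triangulations, the weighting by $a$ vs.\ $1-a$, the proportionality constants in Aldous' theorem and in the condensation regime --- rather than just keeping track of leading critical exponents. The exponents $(a-\tfrac12)^{+1}$ and $(\tfrac12-a)^{-1/2}$ themselves follow directly from the stable tail index $3/2$ of $\mu^{(1/2)}$ and from the criticality $m(1/2)=1$.
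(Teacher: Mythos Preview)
Your treatment of the supercritical side $a\downarrow 1/2$ is correct and matches the paper: the condensation argument gives $C_a=1-m(a)$ exactly, and $1-m(a)=\frac{2(a-1/2)}{\sqrt3+2(a-1/2)}\sim\frac{2}{\sqrt3}(a-\tfrac12)$.

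There is, however, a genuine gap in your treatment of $a\uparrow 1/2$. You write that ``$\mu^{(a)}$ is critical with finite variance $\sigma^2(a)$'' and propose to extract the divergence of $\sigma^2(a)$ from $G_a''(1)$. Neither claim holds. For $a<1/2$ the one-type offspring distribution $\nu_a$ obtained through the Janson--Stef\'ansson bijection is \emph{supercritical} (its mean is $F_a'(1)=(1+2(a-\tfrac12)/\sqrt3)^{-1}>1$), and its tail $\nu_a(k)=\frac{2}{r_c(\gamma+2a)}\,q_k\asymp k^{-5/2}$ is the same power law for \emph{every} $a$; in particular $\sum_k k^2\nu_a(k)=\infty$ for all $a\in(0,1)$. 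So $G_a''(1)$ is identically $+\infty$ and carries no information about the near-critical scaling.

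The missing step is an exponential tilt: one must first find $\lambda_a\in(0,1)$ solving $\lambda F_a'(\lambda)=F_a(\lambda)$, replace $\nu_a$ by the tilted law $\widetilde\nu_a$ with generating function $z\mapsto F_a(\lambda_a z)/F_a(\lambda_a)$ (which \emph{is} critical and, crucially, has finite variance because the tilt cuts off the heavy tail), and only then apply Aldous' invariance principle. The relevant variance is $\widetilde\sigma_a^{\,2}=\lambda_a^2 F_a''(\lambda_a)/F_a(\lambda_a)$, and the whole near-critical computation boils down to the asymptotics of $\lambda_a$ and $F_a''(\lambda_a)$ as $a\uparrow 1/2$: one finds $1-\lambda_a\sim\frac{16}{9}(1/2-a)^2$, whence $F_a''(\lambda_a)=\frac{3}{4(2a-1+\sqrt3)}\,(1-\lambda_a)^{-1/2}\sim\frac{3\sqrt3}{16}(1/2-a)^{-1}$ and $C_a\sim\widetilde\sigma_a/2\sim\frac{3^{3/4}}{8}(1/2-a)^{-1/2}$. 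Without this tilting, your argument cannot produce a finite $C_a$, let alone the correct constant.
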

 
See \eqref{eq:Ca} below for the exact expression of $C_{a}$. 

\bigskip

Let us mention that the exponents appearing in the previous theorems are expected to be universal (see Section \ref{sec:universality} for analogous results for type II triangulations). Finally, we believe that our techniques may be extended to prove that the stable looptrees $(\mathscr{L}_{\alpha} : \alpha \in (1,2))$ give the scaling limits of the outer boundary of  clusters of suitable statistical mechanics models on random planar triangulations, see Section \ref{sec:conjectures}.

\paragraph{Strategy and organization of the paper.} In Section 2, we explain how to decompose a percolated triangulation into a white hull, a black hull and a necklace by means of a surgery along a percolation interface. In Section 3, we study the law of the white hull by using the so-called Boltzmann measure with exposure and its relation with a certain Galton--Watson tree. In Section 4, we prove our main results by carrying out explicit calculations. Finally, Section 5 is devoted to  comments, extensions and conjectures. 

\bigskip

\noindent \textbf{Acknowledgments: } The first author is indebted to Olivier Bernardi and Grégory Miermont for many useful discussions concerning percolation on random maps. 

\section{Decomposition of percolated triangulations} \label{sec:perco}
In this section, we explain how to decompose certain percolated triangulations into a pair of two hulls glued together by a so-called necklace of triangles. This sort of decomposition was first considered by Borot, Bouttier \& Guitter \cite{BBG11,BBG12}. We then associate a natural tree structure to a triangulation with boundary. The crucial feature is that when considering percolation on the UIPT, the random tree coding the hull turns out to be related to a Galton--Watson tree with an explicit offspring distribution in the domain of attraction of a stable law of index $3/2$.

\subsection{Percolated triangulations} \label{sec:percotrig}
A planar map is a proper embedding of a finite connected graph in the two-dimensional sphere, considered up to orientation-preserving homeomorphisms. The faces of the map are the connected components of the complement of the edges, and the degree of a face is the number of edges that are incident to it, with the convention that if both sides of an edge are incident to the
same face, this edge is counted twice. 

As usual in combinatorics, we will only consider \emph{rooted} maps that are maps with a distinguished oriented root edge. If $m$ is a planar map, we will denote by $\mathrm{V}(m)$, $ \mathrm{E}(m)$ and $ \mathrm{F}(m)$ respectively the sets of vertices, edges and faces of $m$.

A triangulation is a (rooted) planar map whose faces are all triangles, i.e.\,have degree three. Self-loops and multiples edges are allowed.  A triangulation with \emph{boundary} $T$  is a planar map whose faces are triangles except the face adjacent on the right of the root edge called the external face, which can be of arbitrary degree. The size $|T|$ of $T$ is its total number of vertices. The boundary $ \partial T$ is the graph made of the vertices and edges adjacent to the external face of $T$ and its perimeter $\# \partial T$ is the degree of the external face. The boundary is \emph{simple} if the number of vertices of $ \partial {T}$ is equal to its perimeter, or equivalently if $ \partial T$ is a discrete cycle.  In the following, we denote by $ \mathbb{T}_{n,p}$ the set of all triangulations with boundary of perimeter $p$ having $n$ vertices in total. For reasons that will appear later, the set $ \mathbb{T}_{2,2}$ is made of the ``triangulation'' made of a single edge. By splitting the root edge of a triangulation into two new edges seen as part of a new triangle (by adding a loop) it follows that $ \#\mathbb{T}_{n,1}$ is also the number of rooted triangulations with $n$ vertices in total.   When working with the UIPT we will also allow  triangulations to be infinite, see \cite{CMM12} for background (in the quadrangular case).

A \emph {percolated triangulation} is by definition a triangulation $T$ with a coloring of its vertices in black or white. We say that the percolation is \emph{nice} if the root edge joins a white vertex to a black vertex (which we write $ \circ \rightarrow \bullet$).
Note that this forces a percolation interface to go through the root edge, and that the latter cannot be a self-loop. The origin of the root edge is called the \emph{white origin} and its target is called the \emph{black origin}.

\medskip

\begin{center}\emph{In the following, we always assume that the percolation is nice.}
\end{center}

\subsection{Necklace surgery}
\label{sec:necklace}
 
In this section, we assume in addition that the percolation interface going through the root edge is finite, see Fig.~\ref{fig:decoupage}.

\paragraph{The white and black hulls.} Let $T$ be a nicely percolated triangulation (possibly infinite). The white cluster is by definition the submap consisting of all the edges (together with their extremities) whose endpoints are in the same white connected component as the white origin. The complement of this cluster consists of connected components.
The white hull $ \mathcal{H}^\circ$ is the union of the white cluster and of all the latter connected components, except the one containing the black origin (see Fig.~\ref{fig:decoupage}). Hence $ \mathcal{H}^{\circ}$ is a triangulation with a (non necessarily simple) boundary, and is by convention rooted at the edge whose origin is the white origin (with the external face lying on its right). Note that by definition all the vertices on the boundary of $ \mathcal{H}^\circ$ are white.

We similarly define the black cluster as the submap consisting of all the edges (together with their extremities) whose endpoints are in the same connected component as the black origin. The black hull $ \mathcal{H}^\bullet$ is {similarly} obtained by filling-in the holes of the black cluster except the one containing the white origin. The map $ \mathcal{H}^\bullet$ is thus a triangulation with a black boundary which is rooted at the edge whose origin is the black  origin. Recall that the perimeters of $ \mathcal{H}^\circ$ and $ \mathcal{H}^\bullet$ are respectively denoted by $\# \partial \mathcal{H}^\circ$ and $\# \partial \mathcal{H}^\bullet$. These quantities are finite by the assumption made in the beginning of this section. However, $|\mathcal{H}^\bullet|$ and $|\mathcal{H}^\circ|$ may be infinite.

\paragraph{Surgery.} Imagine that using a pair of scissors, we separate the  two hulls $ \mathcal{H}^\circ$ and $ \mathcal{H}^\bullet$ by cutting along their boundaries. After doing so, we are left with the two hulls $\mathcal{H}^\circ$ and $ \mathcal{H}^\bullet$ which are now separated, together with the map that was stuck in-between $ \mathcal{H}^\bullet$ and $ \mathcal{H}^\circ$, which is called the \emph{necklace} \cite{BBG11,BBG12}. During this operation, we duplicate in the necklace the vertices which are pinch-points in the boundary of $ \mathcal{H}^\circ$ or $ \mathcal{H}^\bullet$, see Fig.~\ref{fig:decoupage}.
 
 \begin{figure}[!h]
 \begin{center}
 \includegraphics[width=17cm]{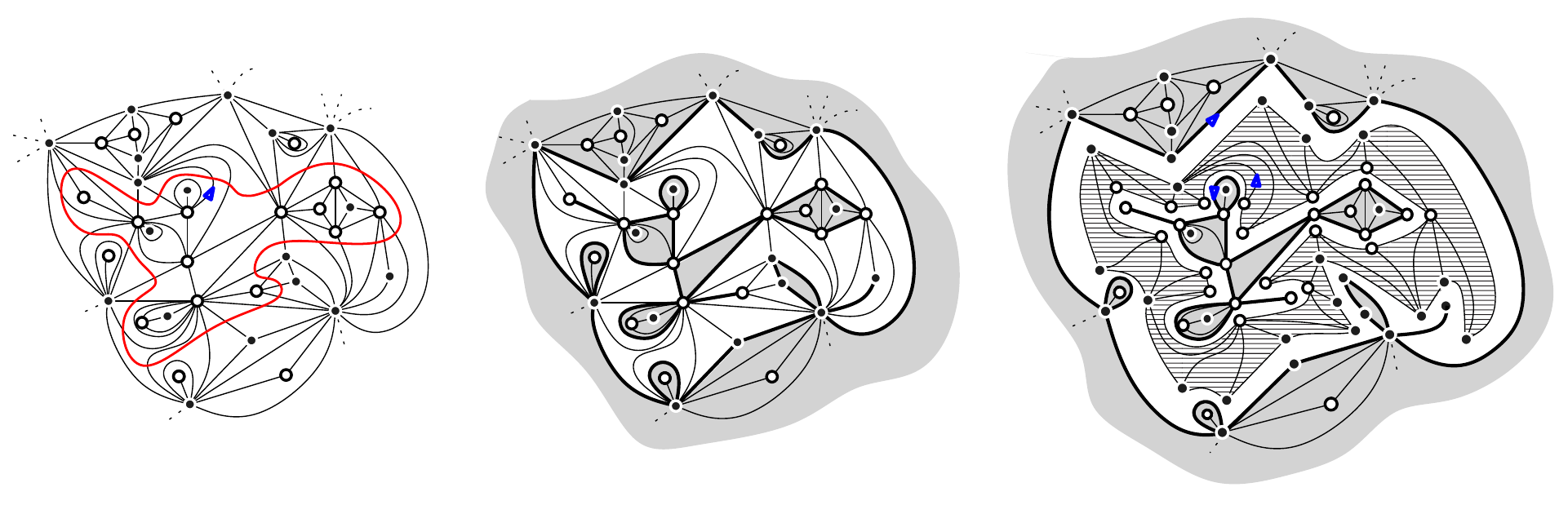}
 \caption{ \label{fig:decoupage} From left to right, a nicely percolated triangulation, the two hulls $ \mathcal{H}^\bullet$ and $ \mathcal{H}^\circ$ (in light gray) as well as the creation of the necklace (dashed on the right) after the surgical operation.}
 \end{center}
 \end{figure}

If $n,m \geq 0$  are integers such that $ m+n > 0$, a \emph{$(n,m)$-necklace} is  by definition a triangulation with two simple boundaries, a ``white'' one of perimeter $n$ and a ``black'' one of perimeter $m$, such that every vertex belongs to one of these two boundaries and such that every triangle has at least one  vertex on each boundary, rooted along an edge joining a white vertex to a black one. The root of the necklace obtained from a nicely percolated triangulation $T$ is the root of $T$. It is an easy exercise to show that for $n,m \geq 0$, $$ \#\{ (n,m) \mbox{-necklaces}\} = { {n+m \choose n}}.$$

 \begin{figure}[!h]
 \begin{center}
\includegraphics[width=0.5 \linewidth]{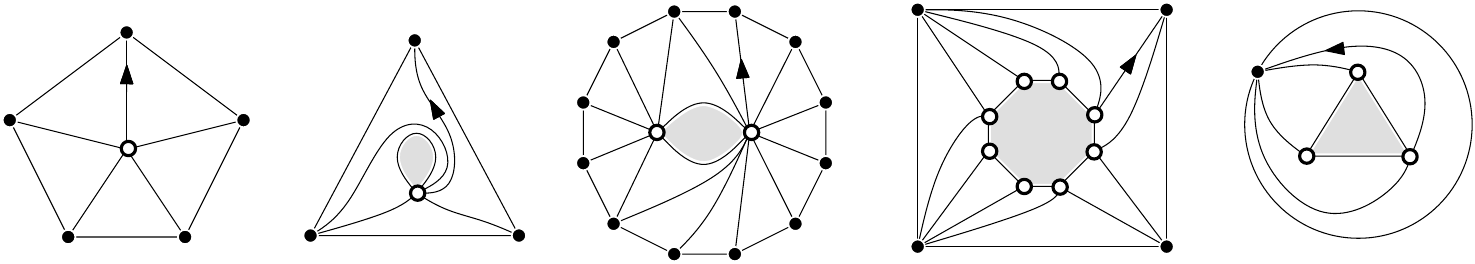}
 \caption{ Examples of $0$--$5$, $1$--$5$, $2$--$12$, $8$--$4$ and $3$--$1$ necklaces.}
 \end{center}
 \end{figure}

It is plain that the last decomposition is invertible, in other words the following result holds: 
\begin{proposition}[Necklace surgery] Every nicely percolated triangulation, such that the interface going through the root edge is finite, can be unambiguously decomposed into a pair of two triangulations with boundary $( \mathcal{H}^{\circ}, \mathcal{H}^{\bullet})$ forming the white and black hulls glued together along a 
$ (\# \partial \mathcal{H}^{\circ} ,\# \partial \mathcal{H}^\bullet) \mbox{-necklace}.$
\end{proposition}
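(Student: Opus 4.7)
The plan is to establish the proposition as a bijection between nicely percolated triangulations with a finite root interface and triples $(\mathcal{H}^\circ, \mathcal{H}^\bullet, N)$ consisting of two triangulations with boundary (the white and black hulls) together with a $(\#\partial\mathcal{H}^\circ, \#\partial\mathcal{H}^\bullet)$-necklace $N$. The forward map is the surgery already described: given $T$, extract $\mathcal{H}^\circ$ and $\mathcal{H}^\bullet$ from their definitions, and let $N$ be the submap made of the remaining triangles of $T$, with any pinch-point of $\partial\mathcal{H}^\circ$ or $\partial\mathcal{H}^\bullet$ duplicated into as many copies as there are visits of the pinch-point along the corresponding boundary contour. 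The root of $N$ is inherited from the root of $T$. The inverse map glues $\partial\mathcal{H}^\circ$ (resp.\ $\partial\mathcal{H}^\bullet$) to the white (resp.\ black) side of $N$ by matching them along the cyclic orderings prescribed by their roots.

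The main task is to verify that the output $N$ of the forward map satisfies the definition of a necklace. I would do this in four steps. First, $N$ has no interior vertex: any vertex of $T$ outside both hulls lies in the external face of $\mathcal{H}^\circ$ and in the external face of $\mathcal{H}^\bullet$, i.e.\ in the annular region separating the two hulls; if such a vertex were not on either boundary, a short Jordan-curve argument using the assumption that the root interface is finite shows that it would in fact sit inside a complementary component of one of the two clusters that has been filled in, contradicting the definition of the hulls. Second, each triangle of $N$ must carry a vertex on each boundary: an all-white triangle adjacent to $\partial\mathcal{H}^\circ$ would already belong to the white cluster (being connected to it by a white vertex), hence to $\mathcal{H}^\circ$, a contradiction; the all-black case is symmetric. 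Third, the two boundaries of $N$ are simple closed curves by construction, because duplicating pinch-points amounts to reading the boundary of the external face of each hull as a simple cycle of length $\#\partial\mathcal{H}^\circ$ and $\#\partial\mathcal{H}^\bullet$ respectively. Fourth, the root of $T$ joins a white vertex of $\partial\mathcal{H}^\circ$ to a black vertex of $\partial\mathcal{H}^\bullet$ through a triangle of $N$, providing the necklace with a root.

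For the inverse map, given a valid triple, the identification of $\partial\mathcal{H}^\circ$ with the white boundary of $N$ (and similarly on the black side) is unique since both are rooted simple cycles of the same length. The glued object is automatically a triangulation because all faces of $N$ are triangles and the hulls themselves are triangulations with boundary. That its white and black hulls agree with the input $\mathcal{H}^\circ$ and $\mathcal{H}^\bullet$ is immediate: the necklace triangles are precisely those whose interiors belong to neither hull, and reading off the hull definitions from the reconstructed triangulation recovers the inputs. The two compositions are then plainly inverse to one another. The main obstacle, as usual in these surgery arguments, is the combinatorics at pinch-points of the boundaries; once one checks that each corner of the external face of a hull at a pinch-point corresponds to exactly one incidence on the corresponding side of $N$, the bijection is fully consistent with the planar embedding.
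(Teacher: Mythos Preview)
Your proposal is correct and follows the same approach as the paper, namely establishing that the surgery described in the text (extracting $\mathcal{H}^\circ$, $\mathcal{H}^\bullet$, and the intermediate necklace) is invertible. In fact you supply considerably more detail than the paper itself, which simply asserts that ``it is plain that the last decomposition is invertible'' and states the proposition without further argument; your four-step verification that $N$ is a genuine necklace and your check of the inverse gluing are exactly what one would write out if asked to justify that sentence.
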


In the next subsection, we further decompose a hull according to the tree structure provided by its $2$-connected components.

\subsection{Tree representation of triangulation with boundary} \label{sec:scoop}

We denote by $ \mathcal{T}$ the set of all plane (rooted and oriented) trees, see \cite{LG05,Nev86} for the formalism. In the following,  \emph{tree} will always mean plane tree.  We will view each vertex of a tree $\tau$ as an individual of
a population whose $\tau$ is the genealogical tree.  The vertex $\varnothing$ is the ancestor of this population and {is} called the root. The degree of a vertex $u \in \tau$ is denoted by $ \textrm{deg}(u)$ and its number of children is denoted by $k_{u}$. The size of $ \tau$ is by definition the total number of vertices and will be denoted by $| \tau|$ and $ \textsf{H}( \tau)$ is the height of the tree, that is, its maximal generation.

\bigskip

 We denote by $ \mathbb{T}^B$ the set of all triangulations with boundary and by $ \mathbb{T}^{S}$ the set of all triangulations with simple boundary {(also called simple triangulations in the sequel)}. Let $T$ be a triangulation with boundary. We recall that the perimeter $ \# \partial T$ of $T$ is the number of half-edges on its boundary. We define the set $ \mathcal{E}(T)$ of all \emph{exterior vertices} of $T$ as the set of all the vertices on the boundary of $T$, and $ \mathcal{I}(T)$ as the complement of $ \mathcal{E}(T)$ in $ \mathrm{V}(T)$, which are the so-called \emph{inner vertices} of $T$. Note that $ \#\mathcal{E}(T) = \# \partial T$ when $T$ has a simple boundary, and that $ \#\mathcal{E}(T) < \# \partial \mathsf{T}$ otherwise. If $T$ is not a simple triangulation, then $T$ can be decomposed into $ \# \partial T - \#\mathcal{E}(T) +1$ different simple triangulations, attached by some pinch-points, see Fig.~\ref{scoop}.

Imagine that we scoop out the interior of all these simple triangulation components and duplicate each edge whose sides both belong to the external face into two ``parallel'' edges (see Fig.~\ref{scoop}). We thus obtain a collection of cycles glued together, which we call the \emph{scooped-out triangulation} and denote it by $\mathsf{Scoop}(T)$. Note that  $\mathsf{Scoop}(T)$ may differ from the boundary $\partial T$ of $T$ only because some  edges may have been duplicated, see Fig.~\ref{scoop}. Note however that the underlying metric spaces are identical. \begin{figure}[!h]
 \begin{center}
 \includegraphics[width=0.8 \linewidth]{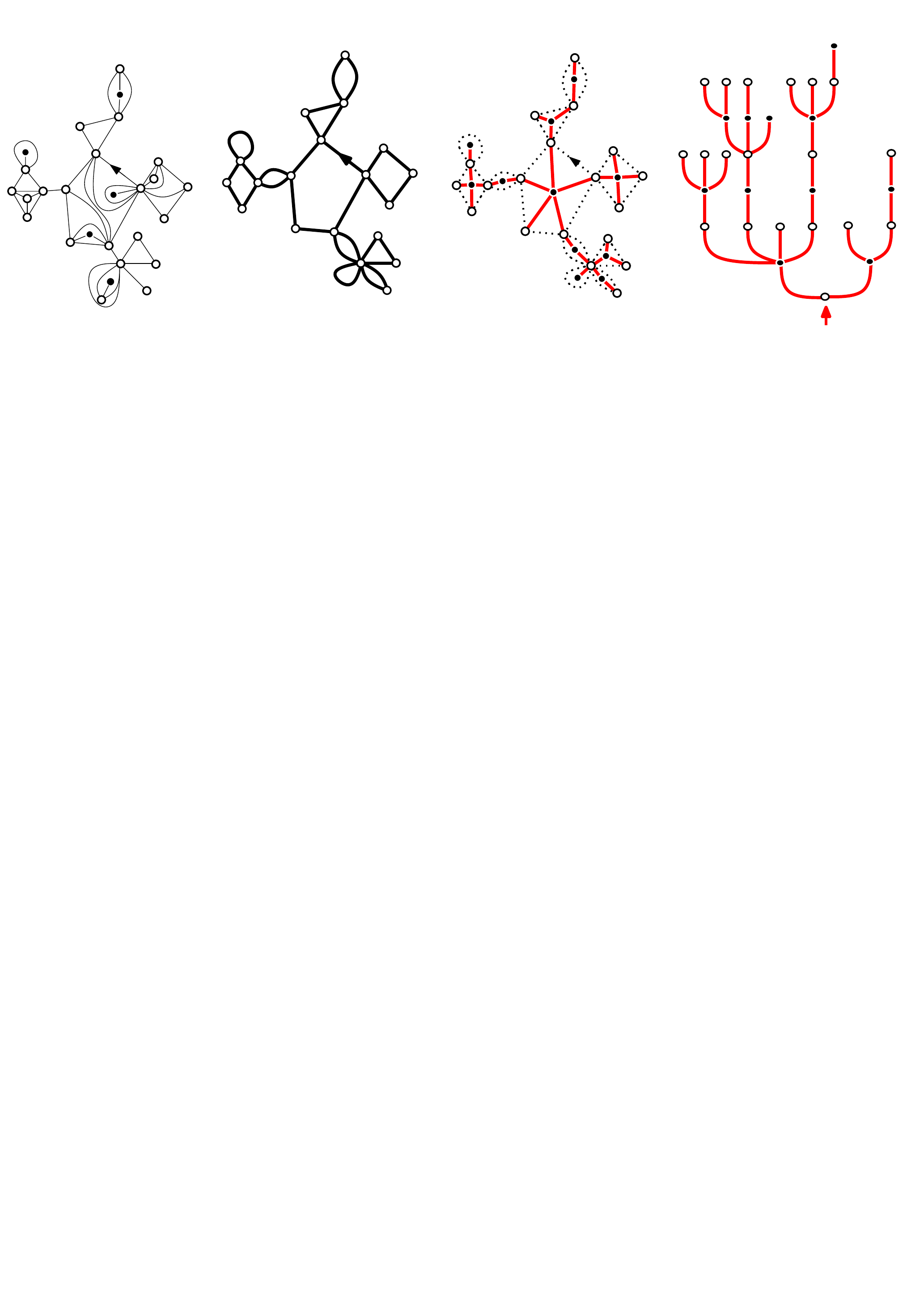}
 \caption{From left to right, a triangulation with boundary, its scooped-out triangulation and the tree structure underneath. \label{scoop}}
 \end{center}
 \end{figure}

 The scooped-out triangulation $\mathsf{Scoop}(T)$ can naturally be represented as a  tree. More precisely, with $\mathsf{Scoop}(T)$ we associate  a tree with two types of vertices, white and black, as follows.
 Inside each cycle, add a new black vertex which is connected to all the white vertices belonging to this cycle.
 The resulting tree is denoted by $ \mathrm{Tree}(T)$ and is rooted at the corner adjacent to the target of the root edge of  $T$ (see Fig.~\ref{scoop}).
 By construction, $ \mathrm{Tree}(T)$ is  a plane tree such that all the vertices at even (resp.~odd) height are white (resp.~black). 
 If $t$ is a plane tree, let $ \bullet(t)$ (resp.~$ \circ(t)$) be the set of all vertices at odd (resp.~even) height.
 If $t = \mathrm{Tree}(T)$, then the vertices belonging to $ \circ(t)$ correspond to the exposed vertices of $T$ and the following relations are easy to check:
  \begin{eqnarray}  \label{eq:nombreexpo}\#\circ(t) &=& \# \mathcal{E}(T) \\ \nonumber \\ 
   \label{eq:bn} |t| = \#\bullet( t)+\#\circ(t)&=& \# \partial T+1\\ \nonumber  \\
   \sum_{ u \in \bullet(t)} \deg (u) &=&  \# \partial  \mathsf{T} \label{eq:sumdegblack} \\
   \sum_{u \in \circ(t)} (1+ k_{u})&=&|t|. \label{eq:sumwhite}
   \end{eqnarray} This scooping-out procedure is a bijection between the set of all triangulations with boundary and the set of all plane trees having at least two vertices together with a finite sequence $(T_{u}, u \in  \bullet(t))$ of triangulations with \emph{simple} boundary such that $\#\partial T_{u} = \mathrm{deg}(u)$ (which correspond to the triangulations inside each cycle).
Recall that $\mathbb{T}_{2,2}$,  the set of all triangulations with simple boundary of perimeter $2$ and no internal vertices, is by convention composed by a degenerate triangulation made of a single edge : We use this triangulation to close a double edge into a single one, see Fig.~\ref{scoop}.
 
\subsection{Enumerative results}
 
In the previous section, we have explained how to decompose a triangulation with boundary into a tree of   triangulations with simple boundary. We now present some useful enumerative results on triangulations with simple boundary. \medskip

 We denote by  $ {W}$ the generating function of triangulations with simple boundary having weight $x$ per \emph{inner} vertex and $y$ per edge on the boundary, that is 
\begin{eqnarray*}{W}(x,y) & :=& \sum_{T \in  \mathbb{T}^{S}} x^{\# \mathcal{I}(T)} y^{\#\partial T} =  
 yx + y^2+\cdots .   \end{eqnarray*}
 Note that the contribution of the ``edge triangulation'' is  $y^2$ in the previous sum. We also let ${w}_{n,p} = [x^n][y^p] {W}$ be the number of triangulations with simple boundary of perimeter $p \geq 1$ and $n$ \emph{internal} vertices. Following Tutte \cite{Tut62}, Krikun \cite{Kri07} calculated the generating function $ {W}(x,y)$. In particular, for $ y \in [0,1/12]$ the radius of convergence of ${W}$ as a function of $x$ is $$r_c := \frac{1}{\sqrt{432}}=  \frac{1}{12 \sqrt{3}},$$ and an explicit formula for $w_{n,p}$ can be found in \cite{Kri07} (Krikun uses the number of edges as size parameter; to translate his formulas recall that if a triangulation of the $p$-gon has $n$ inner vertices then by Euler's formula it has $3n+2p-3$ edges). We will not need its exact expression,  but we will heavily rely on the following asymptotic estimates: 
    \begin{eqnarray} \label{eq:equiwnp} {w}_{n,p} \quad \underset{n\to \infty}{\sim} \quad C_{p}  \cdot n^{-5/2} \ r_{c}^{-n}, \qquad C_{p} = \frac{3^{p-2}   \cdot p \cdot (2 p)!}{4 \sqrt{2 \pi} \cdot  (p!)^2}  \quad \underset{p \to \infty}{\sim} \quad \frac{1}{36\pi \sqrt{2}} \cdot \sqrt{p} \ 12^p.  \end{eqnarray}
In particular, note that  the number of triangulations with $n$ vertices is \begin{eqnarray} \label{eq:equiwnp2} \# \mathbb{T}_{n,1} =w_{n-1,1}  \quad\mathop{ \sim }_{n \rightarrow \infty} \quad  \frac{1}{72 \sqrt{6 \pi }} \cdot n^{-5/2} \ r_{c}^{-n}. \end {eqnarray} We will also use the explicit expression of $  {{W}}(r_{c},y)$: 
  \begin{eqnarray} \label{eq:Wtildeexact} {{W}}(r_{c},y) &=& \frac{y}{2}+ \frac{ (1- 12 y)^ {3/2}-1}{24 \sqrt {3}}. \end{eqnarray}
This expression can be obtained from \cite[(4)]{Kri07} after a change of variables (with the notation of Krikun we have formally $W(x^3,yx^2) = x^3 U_{0}(x,y)$). For every integer $k \geq 1$, we also introduce 
 \begin{eqnarray} \label{eq:defqk} q_{k} \quad :=  \quad \frac{1}{12^{k}} \cdot  [y^k] { {W}}(r_{c},y) \quad =  \quad \frac{1}{12^k} \cdot \sum_{n=0}^\infty  w_{n,k} r_{c}^{n}. \end{eqnarray} Standard singularity analysis shows that  \begin{eqnarray} \label{eq:asymqk} q_{k}  \quad \underset{k \to \infty}{\sim} \quad \frac{1}{32 \sqrt{3 \pi}} \cdot k^{-5/2}. \end{eqnarray} In particular, note that the series $ \sum_{k \geq 1} q_{k}$ is convergent.

\section{Boltzmann triangulations with exposure and GW trees}  \label{sec:BTWE}
This section is devoted to the study of the tree structure of a random triangulation with boundary distributed according to the Boltzmann measure with exposure defined below. This measure will naturally arise in Proposition \ref{prop:loiH} when considering the hulls in a Bernoulli site percolation of the UIPT.

\begin{definition}[Boltzmann measure with exposure on triangulations] For every $a \in (0,1)$, we  introduce a measure $ Q_{a}$  on the set of all triangulations with (general) boundary, called the critical Boltzmann measure with exposure $a$, defined by 
\begin{eqnarray} \label{def:critical} Q_{a}( \mathsf{T}) &= & r_c^{\# \mathrm{V}(T)} \ {12^{-\# \partial T}} \ a^{\# \mathcal{E}(T)}, \qquad \qquad  \forall \ T  \in \mathbb{T}^B.  \end{eqnarray} 
\end{definition}
Note that in this definition, $r_{c}$ is elevated to the power $ \# \mathrm{V}(T)$ and not to the power $ \# \mathcal{I}(T)$ as in $W$.
  Our goal is now to describe the ``law'' of the tree of components of a triangulation under the measure $ Q_{a}$.

 \subsection{A two-type Galton--Watson tree}
 \label{section:twotype}
Given two probability measures $\mu^{\circ}$ and $ \mu^{ \bullet}$ on $\{0,1,2,3, \ldots\}$, we consider a two-type Galton--Watson tree where every vertex at even (resp.~odd) height has a number of children distributed according to $\mu^{\circ}$ (resp.~$\mu^{\bullet}$), all independently of each other. Specifically, using the notation $k_{u}$ for the number of children of a vertex $u$ in a plane tree, its law, denoted by $\mathsf{GW}_{\mu^{\circ},\mu^{\bullet}}$, is characterized by the following formula:
$$\mathsf{GW}_{\mu^{\circ},\mu^{\bullet}}(t)=  \prod_{ u \in \bullet(t)} \mu^{\bullet}( k_{u}) \prod_{ u \in \circ(t)} \mu^{ \circ}( k_{u}), \qquad \qquad \forall \ t \in \mathcal{T} .$$
Recall that $a \in (0,1)$.  To simplify notation, set $\gamma = \sqrt{3}-1, \xi= \gamma/( \gamma+2a)$ and define  two probability distributions  $\mu^{\bullet}$ and $\mu^{\circ}_{a}$  on $\{0,1,2,3, \ldots\}$ by
 $$\mu^{\bullet}(j) = \frac{q_{j+1}}{Z_{\bullet}}, \qquad   \mu^{\circ}_{a}(j) = (1-\xi)\xi^j  \qquad (j \geq 0),$$
 where  $Z_{ \bullet}$ is a normalizing constant. Using \eqref{eq:Wtildeexact}, simple computations show that $Z_{\bullet} = \gamma {r_{c}}/{2}$. The following proposition is the key of this work:
  
\begin{proposition} \label{prop:gwQ} For every $a \in (0,1)$ and for every plane tree $t$ such that $|t| \geq 1$, we have
 \begin{eqnarray*}Q_{a} \left(\{ \mathsf{T} \in  \mathbb{T}^B ; \,  \mathsf{Tree}(T)=t\} \right) &=& \left(\frac{r_{c}(2a+\gamma)}{2}\right)^{|t|} \cdot \mathsf{GW}_{\mu^{\circ}_{a},\mu^{\bullet}}(t).
 \end{eqnarray*} 
 \end{proposition}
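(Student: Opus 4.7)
The strategy is to apply the scooping-out bijection of Section~\ref{sec:scoop} and then rewrite the resulting sum in Galton--Watson form using the identities \eqref{eq:nombreexpo}--\eqref{eq:sumwhite} together with the definition \eqref{eq:defqk} of the weights $q_k$.

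First, fix a plane tree $t$ with $|t|\geq 1$, all of whose vertices at even (resp.\ odd) height are declared white (resp.\ black). The scooping-out bijection identifies the set $\{T\in\mathbb{T}^B:\mathsf{Tree}(T)=t\}$ with the collection of families $(T_u)_{u\in\bullet(t)}$ of \emph{simple} triangulations such that $\#\partial T_u=\deg(u)$ for every black vertex $u$. Under this bijection one has $\#\mathcal{E}(T)=\#\circ(t)$, $\#\partial T=|t|-1=\sum_{u\in\bullet(t)}\deg(u)$, and $\#\mathrm{V}(T)=\#\circ(t)+\sum_{u\in\bullet(t)}\#\mathcal{I}(T_u)$. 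Plugging this into \eqref{def:critical} and factorizing yields
\[
Q_{a}\bigl(\{T:\mathsf{Tree}(T)=t\}\bigr)
= a^{\#\circ(t)}\,r_{c}^{\#\circ(t)}\,12^{-(|t|-1)}\prod_{u\in\bullet(t)}\Bigl(\sum_{S\in\mathbb{T}^{S},\ \#\partial S=\deg(u)} r_{c}^{\#\mathcal{I}(S)}\Bigr).
\]

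Next I would recognize the inner sum: by definition of $W$ and of $q_k$ in \eqref{eq:defqk}, one has $\sum_{S\in\mathbb{T}^{S},\#\partial S=p} r_{c}^{\#\mathcal{I}(S)}=[y^{p}]W(r_{c},y)=12^{p}q_{p}$. Substituting and using once more $\sum_{u\in\bullet(t)}\deg(u)=|t|-1$, the factors of $12$ cancel and one obtains the clean identity
\[
Q_{a}\bigl(\{T:\mathsf{Tree}(T)=t\}\bigr)= (r_{c}a)^{\#\circ(t)} \prod_{u\in\bullet(t)} q_{\deg(u)}.
\]
Since every black vertex in $t$ has a (white) parent, $\deg(u)=k_{u}+1$ for all $u\in\bullet(t)$, so the product becomes $\prod_{u\in\bullet(t)} q_{k_{u}+1}$.

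Finally I would turn this into the Galton--Watson form. Using that $\sum_{u\in\circ(t)}k_{u}=\#\bullet(t)$ (children of white vertices are exactly the black ones), the two-type $\mathsf{GW}$ weight expands as
\[
\mathsf{GW}_{\mu^{\circ}_{a},\mu^{\bullet}}(t)= (1-\xi)^{\#\circ(t)}\,\xi^{\#\bullet(t)}\, Z_{\bullet}^{-\#\bullet(t)}\prod_{u\in\bullet(t)} q_{k_{u}+1}.
\]
Comparing with the previous display, the proposition reduces to checking the two scalar identities $\frac{r_{c}(2a+\gamma)}{2}(1-\xi)=r_{c}a$ and $\frac{r_{c}(2a+\gamma)}{2}\cdot\frac{\xi}{Z_{\bullet}}=1$, which follow from $1-\xi=2a/(2a+\gamma)$, $\xi=\gamma/(2a+\gamma)$ and $Z_{\bullet}=\gamma r_{c}/2$. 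There is no real obstacle here: the only point that requires care is the correct bookkeeping of the identities \eqref{eq:nombreexpo}--\eqref{eq:sumwhite} and the handling of the degenerate $2$-gon (the ``edge triangulation''), which is precisely why the convention $\mathbb{T}_{2,2}=\{\text{single edge}\}$ and the inclusion of the $y^{2}$ term in $W$ were set up earlier.
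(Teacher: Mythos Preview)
Your proof is correct and follows essentially the same route as the paper: both apply the scooping-out bijection to reduce $Q_a(\{T:\mathsf{Tree}(T)=t\})$ to the clean product $(r_c a)^{\#\circ(t)}\prod_{u\in\bullet(t)} q_{k_u+1}$, and then match this against the explicit two-type Galton--Watson weight. The only cosmetic difference is in the final bookkeeping: the paper inserts the normalizing factors $Z_\bullet$ and $\xi$ step by step via \eqref{eq:bn} and \eqref{eq:sumwhite}, whereas you expand $\mathsf{GW}_{\mu^\circ_a,\mu^\bullet}(t)$ directly (using $\sum_{u\in\circ(t)}k_u=\#\bullet(t)$) and reduce the comparison to the two scalar identities, which is arguably a touch cleaner.
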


\proof 
Fix $t \in \mathcal{T}$ with $|t| \geq 1$. 
 Using \eqref{def:critical}, the scoop decomposition of Section \ref{sec:scoop} (and its consequence \eqref{eq:nombreexpo}), \eqref{eq:sumdegblack}, the definition of the $q_{k}$ in \eqref{eq:defqk}, then \eqref{eq:bn} and finally \eqref{eq:sumwhite}, we get that
  \begin{eqnarray*}
Q_{a}(\{ \mathsf{T} : \mathsf{Tree}(T)=t\}) & = &\frac{1}{12^{\# \partial  \mathsf{T}}} \prod_{ u \in  \circ(t)}^{} ar_{c}  \sum_{n_{1} , \ldots, n_{\# \bullet(t)}  \geq 0} \hspace{0.2cm}\prod_{ u \in \bullet(t)} r_{c}^{n_{i}} w_{n_{i},  \mathrm{deg}(u)} \\
 &\underset{ \eqref{eq:sumdegblack}, \eqref{eq:defqk}}{=}&  \prod_{u\in \circ(t)}a r_{c}   \prod_{ u \in \bullet(t)} q_{k_{u}+1}\\
&\underset{\eqref{eq:bn}}{=}&  Z_{ \bullet}^{\# \bullet(t) + \# \circ(t)} \prod_{u\in \circ(t)}\frac{ar_{c}}{Z_{\bullet}} \prod_{ u \in \bullet(t)} \frac{q_{k_{u}+1}}{Z_{ \bullet}}\\
&\underset{\eqref{eq:sumwhite}}{=}&\left(\frac{Z_{ \bullet}}{\xi}\right)^{\# \bullet(t) + \# \circ(t)}  \prod_{u\in \circ(t)}\frac{a r_{c}}{Z_{\bullet}}\xi^{k_{u}+1}   \prod_{ u \in \bullet(t)} \frac{q_{k_{u}+1}}{Z_{ \bullet}}. \end{eqnarray*}
Since $ \xi a r_{c}/Z_{ \bullet}=1- \xi$, this completes the proof.  \endproof

 \begin{rek}\label{rem:crit}A simple computation shows that the mean of $\mu^{\bullet}$ is equal to $m^{ \bullet} =  1/\gamma$ and that the mean of $\mu^{\circ}_{a}$ is   $m^{ \circ}_{a} = \gamma/(2a)$. In particular $m^{\bullet}m^{ \circ}_{a}=1/(2a)$, so that the  two-type Galton--Watson tree is critical if and only if $a=1/2$.
 \end{rek}
 
The following proposition will be useful when we will deal with the UIPT. Recall that $ \mathbb{T}_{n,p}$ is the set of all triangulations with general boundary of perimeter $p$ having $n$ vertices \emph{in total}.

\begin{proposition} \label{prop:phi} For every fixed $p \geq 1$, we have
$ \displaystyle Q_{a} ( \mathbb{T}_{n,p})  \sim   K_a(p) \cdot n^{-5/2}$ as $n \to \infty$,   with
 \begin{eqnarray*}
 K_{a}(p) &=& \left(  \frac{r_{c}(\gamma + 2a)}{2} \right)^{p+1} \mathsf{GW}_{ \mu^{\circ}_a,\mu^{\bullet}} \left[  \sum_{u \in \bullet(\tau)}  \phi(k_{u}) \mathbbm{1}_{|\tau|=p+1}\right],  \end{eqnarray*} where
 $$\phi(k) \quad = \quad	  \frac{C_{k+1}}{12^{k+1}q_{k+1}}  \quad\mathop{\sim}_{k \rightarrow \infty} \quad   \frac{4}{9}  \sqrt{ \frac{6}{ \pi}} \cdot k^3.$$\end{proposition}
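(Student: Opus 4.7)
My plan is to apply the scoop decomposition of Section \ref{sec:scoop} to express $Q_a(\mathbb{T}_{n,p})$ as a finite sum over plane trees $t$ of size $p+1$ in which the $n$-dependence is a convolution of sequences with $n^{-5/2}$ tails; the asymptotic will then follow from a standard one-big-jump principle for subexponential sequences. Concretely, for $T \in \mathbb{T}_{n,p}$ the scoop decomposition identifies $T$ with a pair $(t, (T_u)_{u \in \bullet(t)})$ where $|t|=p+1$ (by \eqref{eq:bn}) and each $T_u$ is a simple triangulation of perimeter $\deg(u)$, with $\sum_{u \in \bullet(t)} \#\mathcal{I}(T_u) = n - \#\circ(t)$ (by \eqref{eq:nombreexpo}). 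Summing $Q_a(T) = r_c^n \cdot 12^{-p} \cdot a^{\#\circ(t)}$ over such $T$ and using $12^p = \prod_{u \in \bullet(t)} 12^{\deg(u)}$ from \eqref{eq:sumdegblack}, I obtain
$$Q_a(\mathbb{T}_{n,p}) = 12^{-p} \sum_{t : \, |t|=p+1} (a r_c)^{\#\circ(t)} \sum_{\substack{(n_u)_{u \in \bullet(t)} \\ \sum n_u = n - \#\circ(t)}} \prod_{u \in \bullet(t)} r_c^{n_u}\, w_{n_u, \deg(u)}.$$

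For each fixed $t$, the factor $b_u(n) := r_c^n\, w_{n, \deg(u)}$ satisfies $b_u(n) \sim C_{\deg(u)}\, n^{-5/2}$ by \eqref{eq:equiwnp} and $\sum_n b_u(n) = 12^{\deg(u)} q_{\deg(u)}$ by \eqref{eq:defqk}. Since the number of summands $m := \#\bullet(t)$ is bounded by $p+1$, the classical one-big-jump asymptotic for convolutions of subexponential sequences gives, with $N := n - \#\circ(t) \sim n$,
$$\sum_{n_1 + \cdots + n_m = N} \prod_u b_u(n_u) \ \mathop{\sim}_{N \to \infty}\ N^{-5/2} \sum_{u^* \in \bullet(t)} C_{\deg(u^*)} \prod_{u \neq u^*} 12^{\deg(u)} q_{\deg(u)} \ =\ N^{-5/2} \cdot 12^p \prod_{u \in \bullet(t)} q_{k_u+1} \cdot \sum_{u^* \in \bullet(t)} \phi(k_{u^*}),$$
upon using $\deg(u) = k_u + 1$, $\sum_u \deg(u) = p$, and the definition of $\phi$. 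Substituting back and exchanging the limit with the (finite) sum over trees $t$ of size $p+1$, I conclude
$$Q_a(\mathbb{T}_{n,p}) \ \sim \ n^{-5/2} \sum_{t : \, |t|=p+1} (a r_c)^{\#\circ(t)} \prod_{u \in \bullet(t)} q_{k_u+1} \cdot \sum_{u^* \in \bullet(t)} \phi(k_{u^*}).$$

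To identify this sum as $K_a(p)$, I invoke the identity $(a r_c)^{\#\circ(t)} \prod_{u \in \bullet(t)} q_{k_u+1} = \bigl(r_c(2a+\gamma)/2\bigr)^{p+1}\, \mathsf{GW}_{\mu^{\circ}_a, \mu^{\bullet}}(t)$, which is precisely the chain of equalities in the proof of Proposition \ref{prop:gwQ}; this recasts the right-hand side as $\bigl(r_c(2a+\gamma)/2\bigr)^{p+1}\, \GWa{\mathbbm{1}_{|\tau|=p+1} \sum_{u \in \bullet(\tau)} \phi(k_u)} = K_a(p)$. For the asymptotic of $\phi$ it suffices to plug the tail estimates $C_{k+1} \sim \frac{1}{36\pi\sqrt{2}}\sqrt{k+1}\, 12^{k+1}$ from \eqref{eq:equiwnp} and $q_{k+1} \sim \frac{1}{32 \sqrt{3\pi}} (k+1)^{-5/2}$ from \eqref{eq:asymqk} into $\phi(k) = C_{k+1}/(12^{k+1} q_{k+1})$: the factors $12^{k+1}$ cancel, and a direct simplification yields $\phi(k) \sim \frac{8}{9} \sqrt{3/(2\pi)}\, k^3 = \frac{4}{9} \sqrt{6/\pi}\, k^3$. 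The only analytic point requiring care is the one-big-jump convolution estimate, but since $m$ is bounded uniformly in $t$ by the fixed quantity $p+1$, this is a routine consequence of the subexponentiality of the sequence $(n^{-5/2})$ and poses no real difficulty.
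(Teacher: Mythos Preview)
Your proof is correct and follows essentially the same route as the paper: you use the scoop decomposition to write $Q_a(\mathbb{T}_{n,p})$ as a finite sum over trees of size $p+1$ of convolutions of the sequences $r_c^n w_{n,\deg(u)}$, apply the one-big-jump asymptotic (stated in the paper as Lemma~\ref{lem:cfg}), and then recognize the resulting tree sum via the identity proved in Proposition~\ref{prop:gwQ}. The only cosmetic difference is that you carry out the simplification $\sum_{u^*} C_{\deg(u^*)} \prod_{u\neq u^*} 12^{\deg(u)} q_{\deg(u)} = 12^p \prod_u q_{k_u+1}\sum_{u^*}\phi(k_{u^*})$ in one step and verify the constant in the asymptotic of $\phi$ explicitly, whereas the paper spreads this over a few displayed lines.
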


\proof By \eqref{eq:bn}, if $T \in \mathbb{T}_{n,p}$ and $t = \mathrm{Tree}(T)$, then $ 1 \leq \# \circ(t) \leq p$. Using the scoop-out decomposition, we can  thus write 
 \begin{eqnarray} \label{eq:avant} Q_{a}( \mathbb{T}_{n,p}) &=& \frac{1}{12^p} \cdot \sum_{\begin{subarray}{c} t \in \mathcal{T} \\ |t|=p+1 \end{subarray}} \prod_{ \circ(t)}^{} ar_{c}  \sum_{n_{1} + ... + n_{\# \bullet(t)} = n - \# \circ(t)} \hspace{0.2cm}\prod_{ u \in \bullet(t)} r_{c}^{n_{i}} w_{n_{i},  \mathrm{deg}(u)}.\end{eqnarray}
As $n \to \infty$, a standard phenomenon occurs: in the second sum appearing in \eqref{eq:avant}, the only terms $n_{1}, \ldots , n_{ \#\bullet(t)}$ that have a contribution in the limit are those where  one term is of order $n$ whereas all the others remain small. More precisely, we use the following lemma whose proof is similar to that of \cite[Lemma 2.5]{AS03} and is left to the reader: 
 \begin{lemma} \label{lem:cfg}Fix an integer $k \geq 0$ and $ \beta>1$. For every $i \in \{1,2, \ldots , k\}$, let $(a^{(i)}_{n} ; n \geq 0)$ be a sequence of positive numbers such that $a^{(i)}_{n} \sim C_{i}  \cdot n^{-\beta}$ as $ n \rightarrow \infty$. Then 
 $$ \lim_{n \to \infty}  n^{\beta} \sum_{n_{1}+ ... + n_{k}=n} \hspace{0.3cm} \prod_{i=1}^k a^{(i)}_{n_{i}} = \sum_{i=1}^k C_{i} \prod_{ j \ne i} \sum_{n=0}^\infty a^{(j)}_{n}.$$\end{lemma}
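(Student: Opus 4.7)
The plan is to proceed by induction on $k$, splitting the convolution of the first sequence with the $(k-1)$-fold convolution of the remaining sequences into the two regimes $\{m \leq n/2\}$ and $\{m > n/2\}$, and identifying the limit of each piece by dominated convergence.

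The base case $k=0$ is vacuous and $k=1$ is immediate (the formula $\sum_i C_i \prod_{j \neq i}\sum_m a^{(j)}_m$ reduces to $C_1$, with an empty product equal to $1$). For the inductive step, I would set $F_k(n) := \sum_{n_1+\cdots+n_k = n} \prod_{j=1}^k a^{(j)}_{n_j}$ and $G(r) := \sum_{n_2+\cdots+n_k=r}\prod_{j=2}^k a^{(j)}_{n_j}$. The induction hypothesis gives $G(r) \sim D \cdot r^{-\beta}$ with $D := \sum_{i=2}^k C_i \prod_{j \in \{2,\ldots,k\}\setminus\{i\}} \sum_m a^{(j)}_m$, and combined with boundedness of $G$ on bounded sets, this yields a uniform tail bound $G(r) \leq M(1+r)^{-\beta}$ for some constant $M$. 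Writing $F_k$ as a convolution of $a^{(1)}$ with $G$ and splitting at the midpoint,
$$F_k(n) \;=\; \sum_{m \leq n/2} a^{(1)}_m G(n-m) \;+\; \sum_{m > n/2} a^{(1)}_m G(n-m) \;=:\; I_1(n) + I_2(n).$$

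For $I_1(n)$: for every fixed $m$ one has $n^\beta G(n-m) \to D$ by the induction hypothesis, while the uniform bound $G(n-m) \leq M(n-m)^{-\beta} \leq M 2^\beta n^{-\beta}$ (valid since $n-m \geq n/2$) gives $n^\beta G(n-m) \leq M 2^\beta$. The envelope $M 2^\beta \cdot a^{(1)}_m$ is summable in $m$ because $a^{(1)}_m \sim C_1 m^{-\beta}$ with $\beta>1$ implies $\sum_m a^{(1)}_m < \infty$, so dominated convergence yields $n^\beta I_1(n) \to D \sum_m a^{(1)}_m$. Symmetrically, substituting $r = n-m$ in $I_2(n)$ and applying the same argument with the roles of $a^{(1)}$ and $G$ swapped gives $n^\beta I_2(n) \to C_1 \sum_r G(r) = C_1 \prod_{j=2}^k \sum_m a^{(j)}_m$. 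Summing,
$$n^\beta F_k(n) \;\xrightarrow[n\to\infty]{}\; D \sum_m a^{(1)}_m + C_1 \prod_{j=2}^k \sum_m a^{(j)}_m \;=\; \sum_{i=1}^k C_i \prod_{j\neq i} \sum_m a^{(j)}_m,$$
completing the induction.

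The main subtlety is the uniform tail bound $G(r) \leq M(1+r)^{-\beta}$ (and similarly for each $a^{(i)}_n$) that powers the dominated-convergence step; this follows at once from the asymptotic relations $G(r)\, r^\beta \to D$ and $a^{(i)}_n\, n^\beta \to C_i$, which force the sequences $G(r)(1+r)^\beta$ and $a^{(i)}_n (1+n)^\beta$ to be bounded. Once these envelopes are in hand, the midpoint split makes the argument go through uniformly for all $\beta > 1$, with every other step being elementary bookkeeping; in particular it avoids the cruder pigeonhole bound on the ``diffuse'' tuples, which would only succeed for $\beta > 2$.
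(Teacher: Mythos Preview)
Your proof is correct. The paper does not give its own proof of this lemma: it states that the argument is similar to \cite[Lemma 2.5]{AS03} and leaves it to the reader, so there is no in-paper proof to compare against. Your midpoint split combined with dominated convergence is exactly the standard way such ``one big jump'' convolution estimates are proved, and your induction together with the uniform bound $G(r)\le M(1+r)^{-\beta}$ handles all $\beta>1$ cleanly.
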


Multiplying both sides of \eqref{eq:avant} by $n^{5/2}$,  by \eqref{eq:equiwnp} we are in position to apply Lemma \ref{lem:cfg}  together with   the definition of $q_{k}$ \eqref{eq:defqk} to get:
 \begin{eqnarray*}  \lim_{n \to \infty}n^{5/2} Q_{a}( \mathbb{T}_{n,p}) & = & \frac{1}{12^p}\sum_{\begin{subarray}{c} t \in \mathcal{T} \\ |t|=p+1 \end{subarray}} \prod_{ \circ(t)}^{} ar_{c}  \sum_{ u \in \bullet(t)} C_{ \mathrm{deg}(u)}  \prod_{  \begin{subarray}{c} v \in \bullet(t) \\ v \ne u \end{subarray}} 12^{ \mathrm{deg}(v)} q_{ \mathrm{deg}(v)}\\
 &\underset{ \eqref{eq:sumdegblack}}{=}&\sum_{\begin{subarray}{c} t \in \mathcal{T} \\ |t|=p+1 \end{subarray}}  \prod_{   \circ(t)}^{} ar_{c}\sum_{ u \in \bullet(t)} \frac{C_{ \mathrm{deg}(u)}}{12^{ \mathrm{deg}(u)}}  \prod_{  \begin{subarray}{c} v \in \bullet(t) \\ v \ne u \end{subarray}} q_{ \mathrm{deg}(v)}\\
 &=& \sum_{\begin{subarray}{c} t \in \mathcal{T} \\ |t|=p+1 \end{subarray}}  \prod_{   \circ(t)}^{} ar_{c}  \prod_{  \begin{subarray}{c} v \in \bullet(t)  \end{subarray}} q_{ \mathrm{deg}(v)} \sum_{ u \in \bullet(t)} \frac{C_{ \mathrm{deg}(u)}}{12^{ \mathrm{deg}(u)} q_{ \mathrm{deg(u)}}}.\end{eqnarray*} Performing the same manipulations as in the proof of Proposition \ref{prop:gwQ},  the last display is equal to
 \begin{eqnarray*} &=& \left(  \frac{r_{c}(\gamma + 2a)}{2} \right)^{|t|} \sum_{\begin{subarray}{c} t \in \mathcal{T} \\ |t|=p+1 \end{subarray}}  \prod_{ u \in \circ(t)}^{} \mu^{\circ}_a( k_{u}) \prod_{ u \in \bullet(t)} \mu^{ \bullet}(k_{u}) \sum_{ u \in \bullet(t)} \frac{ C_{k_{u}+1}}{12^{k_{u}+1}  { q_{k_{u}+1}}} \\
 &=& \left(  \frac{r_{c}(\gamma + 2a)}{2} \right)^{p+1} \mathsf{GW}_{ \mu^{\circ}_a,\mu^{\bullet}} \left[  \sum_{u \in \bullet(\tau)}  \phi(k_{u}) \mathbbm{1}_{|\tau|=p+1}\right], \end{eqnarray*} where $\phi(k) = C_{k+1}/(12^{k+1}q_{k+1})$ is asymptotically equivalent to $ \frac{4}{9}  \sqrt{ \frac{6}{ \pi}} \cdot k^3$ as $k \rightarrow \infty$ by \eqref{eq:asymqk} and \eqref{eq:equiwnp}.  \endproof 
 
 We conclude this section by giving the asymptotic behavior of the expectation appearing in the definition of $ K_{a}(p)$  as $ p \rightarrow \infty$, in the critical case $a =1/2$: 
\begin{equation}
\label{eq:constante}\mathsf{GW}_{\mu^{\circ}_{\text{\sfrac{1}{2}}},\mu^{\bullet}} \left[ { \sum_{u \in \bullet(\tau)} \phi(k_{u})  \mathbbm{1}_{| \tau|=p+1} } \right]  \quad\mathop{ \sim}_{p \rightarrow \infty} \quad   \frac{3^{1/6}}{  \Gamma(-2/3)^2 \cdot\sqrt{ 2\pi}} \cdot p^{1/3}.
\end{equation}
 The proof is postponed to the appendix (see Corollary \ref {cor:At}).

\subsection {Reduction to a one-type Galton--Watson tree}
\label{section:bijection}
We have seen in the last section that the ``law'' of the tree associated with a Boltzmann triangulation with exposure is closely related to a \emph{two-type} Galton--Watson tree. In order to study this random tree, we will use a bijection due to Janson \& Stefánsson \cite[Section 3]{JS12} which will map this two-type Galton--Watson tree to a standard \emph{one-type} Galton--Watson tree, thus enabling us to use the vast literature on  this subject.

We start by describing this bijection, denoted by $ \mathcal{G}$ (the interested reader is referred to \cite{JS12} for further details). First set $ \mathcal {G}( \tau)=  \{\varnothing\}$ if $ \tau=  \{\varnothing\}$ is composed of a single vertex. Now fix a tree $ \tau \neq  \{ \varnothing \}$. The tree $ \mathcal{G}( \tau)$ has the same vertices as $ \tau$, but the edges are different and are defined as follows. For every white vertex $u$ repeat the following operation : denote $u_{0}$ be the parent of $u$ (if $u \ne \varnothing$) and then list the children of $u$ in lexicographical order $u_{1},u_{2}, ... , u_{k}$. If $u \ne \varnothing$ draw the edge between $ u_{0}$ and $u_{1}$ and then edges between $u_{1}$ and $u_{2}$, ... , $u_{k-1}$ and $u_{k}$ and finally between $u_{k}$ and $u$. If $u$ is a white leaf this reduces to draw the edge between $u_{0}$ and $u$. 
One can check that the graph  $ \mathcal{G}( \tau)$ defined by this procedure is a tree. In addition, $\mathcal{G}( \tau)$ is rooted at the corner between  the root of $\tau$ and its first child (see Fig.~\ref {fig:bij}). 

\begin{figure}[!h]
 \begin{center}
\includegraphics[width= 0.9 \linewidth]{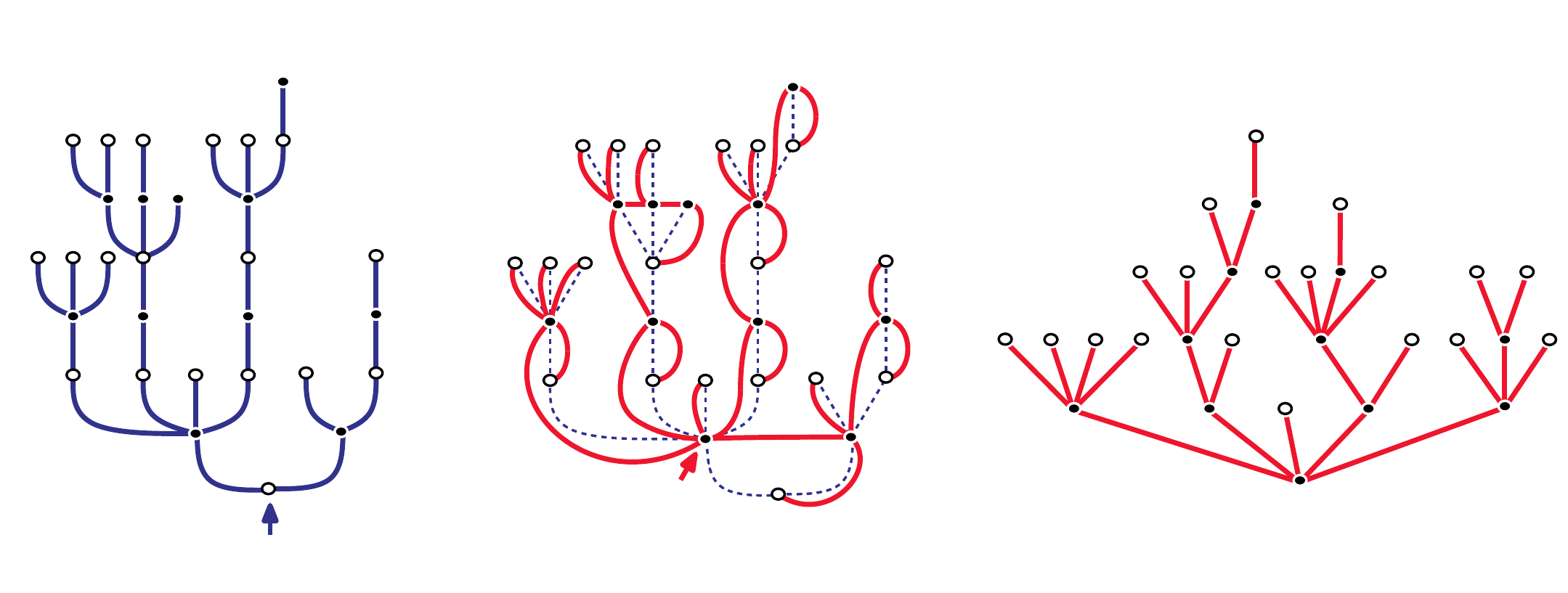}
 \caption{An example of a tree $ \tau$ (left) where vertices at even (resp.~odd) generation have been colored in white (resp.~black), and two representations of $ \mathcal{G}( \tau)$ (middle and right).
  \label {fig:bij}}
 \end{center}
 \end{figure}
 
This mapping thus has the property that every vertex at even generation is mapped to a leaf, and every vertex at odd generation with $k \geq 0$ children is mapped to a vertex with $k+1$ children. The following result is implicit in \cite[Appendix A]{JS12}, but for sake of completeness we give a   proof. \begin{proposition}[\cite{JS12}]\label {prop:onetype}Let $ \rho,\mu$ be two probability measures on $  \{0,1,2, \ldots\}$. Assume that $ \rho$ is a geometric distribution, i.e. there exists $ \lambda \in (0,1)$ such that $ \rho(i)= (1- \lambda) \lambda^{i}$ for $i \geq 0$. Then the image of $ \mathsf{GW}_ {\rho,\mu}$ under  $ \mathcal{G}$ is the Galton--Watson measure $ \mathsf{GW}_{ \nu}$, where $ \nu$ is defined by:
$$ \nu_{0}= 1- \lambda, \qquad \nu_{k}=  \lambda  \cdot \mu_{k-1}, \quad k \geq 1.$$
\end{proposition}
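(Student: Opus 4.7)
My plan is to write down both probabilities explicitly and show they agree term-by-term by exploiting the combinatorial effect of $\mathcal{G}$ together with the memoryless structure of the geometric distribution.

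First, I would record the key combinatorial property of $\mathcal{G}$ that can be read off directly from its definition: every white vertex of $\tau$ becomes a leaf of $\mathcal{G}(\tau)$ (its only neighbor in $\mathcal{G}(\tau)$ is the last of its children, or its parent if it is a leaf), and every black vertex $u$ of $\tau$ with $k_u$ children becomes a vertex of $\mathcal{G}(\tau)$ with exactly $k_u+1$ children (its neighbors are its previous sibling in $\tau$ above and next sibling below in the path construction, except for the first black child who is connected above to its grandparent, and below, one extra child for each white child of $u$). In particular, $\mathcal{G}$ is a bijection between $\mathcal{T}$ and $\mathcal{T}$, and for $t=\mathcal{G}(\tau)$ with $\tau\neq\{\varnothing\}$, leaves of $t$ are in one-to-one correspondence with the set $\circ(\tau)$ of white vertices, while internal vertices of $t$ of out-degree $\ell\ge 1$ are in one-to-one correspondence with the vertices of $\bullet(\tau)$ of out-degree $\ell-1$.

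Second, I would compute. Using the displayed formula for $\mathsf{GW}_{\rho,\mu}$ and the geometric form $\rho(j)=(1-\lambda)\lambda^{j}$, we have
\[
\mathsf{GW}_{\rho,\mu}(\tau) \;=\; \prod_{u\in\circ(\tau)}(1-\lambda)\lambda^{k_u}\prod_{u\in\bullet(\tau)}\mu(k_u) \;=\; (1-\lambda)^{\#\circ(\tau)}\,\lambda^{\sum_{u\in\circ(\tau)}k_u}\prod_{u\in\bullet(\tau)}\mu(k_u).
\]
Since the root of $\tau$ is white and every black vertex is a child of a white vertex, we have the identity $\sum_{u\in\circ(\tau)}k_u=\#\bullet(\tau)$, so
\[
\mathsf{GW}_{\rho,\mu}(\tau) \;=\; (1-\lambda)^{\#\circ(\tau)}\prod_{u\in\bullet(\tau)}\lambda\,\mu(k_u).
\]
On the other hand, using the bijection together with the definition of $\nu$,
\[
\mathsf{GW}_{\nu}(\mathcal{G}(\tau)) \;=\; \prod_{v\text{ leaf of }\mathcal{G}(\tau)}\nu_0 \prod_{v\text{ internal}}\nu_{k_v'} \;=\; (1-\lambda)^{\#\circ(\tau)}\prod_{u\in\bullet(\tau)}\nu_{k_u+1} \;=\; (1-\lambda)^{\#\circ(\tau)}\prod_{u\in\bullet(\tau)}\lambda\,\mu(k_u),
\]
and the two right-hand sides coincide. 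Treating the trivial case $\tau=\{\varnothing\}$ separately (both measures assign it mass $1-\lambda=\nu_0$) concludes the proof.

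There is no real obstacle here: the substance of the statement is the structural property of $\mathcal{G}$, which is the content of the preceding discussion in \cite{JS12}; the probabilistic identity then follows by the one-line observation that $\lambda^{\sum k_u}$ distributes as one factor $\lambda$ per black vertex, precisely what is needed to turn $\mu(k_u)$ into $\nu_{k_u+1}$. The only care needed is to check the vertex-counting identity $\sum_{u\in\circ(\tau)}k_u=\#\bullet(\tau)$ and to handle the degenerate tree.
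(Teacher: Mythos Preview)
Your proof is correct and follows essentially the same approach as the paper: both compute $\mathsf{GW}_{\rho,\mu}(\tau)$ explicitly, use the counting identity $\sum_{u\in\circ(\tau)}k_u=\#\bullet(\tau)$ (the paper writes it equivalently as $\#\bullet(\tau)+\#\circ(\tau)=\sum_{u\in\circ(\tau)}(1+k_u)$) to redistribute the geometric factors as one $\lambda$ per black vertex, and then invoke the combinatorial property of $\mathcal{G}$ to match with $\mathsf{GW}_\nu(\mathcal{G}(\tau))$. Your presentation is slightly cleaner in that you separate the $(1-\lambda)$ and $\lambda$ factors from the start, but the substance is identical.
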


\begin {proof}
Fix a tree $t$. Color in white the vertices at even generation in $t$ and in black the other vertices. Recall that $\bullet( t)$ (resp.~$\circ(t)$) is the set of all black (resp.~white) vertices. Then, using the fact that $\# \bullet( t) + \# \circ(t) =  \sum_{u \in \circ(t)} (1+ k_{u})$, write \begin{eqnarray*}
\mathsf{GW}_ { \rho,\mu}( t) &=& \prod_{ u \in \bullet(t)} \mu_{k_{u}} \prod_{u\in \circ(t)} \lambda^{k_{u}+1} (1/ \lambda-1) \\
&=& \lambda^ { \#\bullet(t)+ \#\circ( t)} \prod_{ u \in \bullet(t)} \mu_{k_{u}} \prod_{u\in \circ(t)} (1/ \lambda-1) \\
&=& \prod_{u\in \circ(t)} ( 1- \lambda) \cdot \prod_{ u \in \bullet(t)} \lambda \mu_{k_{u}}.
\end{eqnarray*}
Since $ \mathcal{G}$ maps white vertices to leaves and black vertices with $k$ children to vertices with $k+1$ children, the last expression implies that for a tree $ \tau$:
$$\mathsf{GW}_ { \rho,\mu}( \mathcal{G}^{-1}( \tau))= \prod_{\begin{subarray}{c}u \in \tau\\  k_{u}=0 \end{subarray}} (1- \lambda) \prod_{\begin{subarray}{c}u \in \tau\\  k_{u}>0 \end{subarray}} \lambda \cdot\mu_{k_{u}-1}.$$
The conclusion follows.
\end {proof}

\paragraph{Application.}In virtue of Proposition \ref {prop:onetype} (applied with $ \mu=\mu^{\bullet}$ and $ \rho=\mu^{\circ}_{a}$), the image of $ \mathsf{GW}_{\mu^{\circ}_{a},\mu^{\bullet}}$ by $ \mathcal{G}$ is a standard Galton--Watson measure with offspring distribution $ \nu_{a}$ on $  \{0,1, \ldots\}$ defined by
$$ \nu_{a}(0)= \frac{ 2a}{ \gamma+2a}, \qquad \qquad \nu_{a}(k)=\frac{ 2}{ r_{c}(\gamma+2a)} q_{k} \quad (k \geq 1).$$
Using \eqref{eq:Wtildeexact}, it is a simple matter to check that the generating function of $ \nu_{a}$ is given by:
\begin{equation}
\label{eq:nua} F_a(z)=\sum_{i \geq 0} \nu_{a}(i) z^i=  \frac{2a-1 + \sqrt {3} z + (1-z)^{3/2}}{2a- 1+ \sqrt {3}}.
\end{equation}
In particular $F_a'(1)= (1+ 2(a-1/2)/ \sqrt {3})^{-1}$, so that $ \nu_{a}$ is critical if and only if $a=1/2$. When $a=1/2$, to simplify notation we write $ \nu= \nu_{\text{\sfrac{1}{2}}}$. Then note that:
\begin{equation}
\label{eq:nu}\nu(0)= \frac{ \sqrt {3}}{3} \textrm{ and }\nu(i)= 24 \cdot q_{i}  \, \,  (i \geq 1), \qquad \sum_{i \geq 0} \nu(i) z^i=z+ \frac{(1-z)^{3/2}}{ \sqrt {3}}, \qquad \nu(k)   \mathop{ \sim }_{k \rightarrow \infty}   \frac{ \sqrt{3}}{4 \sqrt{\pi}}  \cdot k^{-5/2}.
\end{equation}
In particular, this enables us to find the asymptotic behavior of the probability that our two-type Galton--Watson tree has total size $n$ (see Corollary \ref {cor:At} for this well-known fact):
\begin{equation}
\label{eq:equivtaille}\mathsf{GW}_{\mu^{\circ}_{\text{\sfrac{1}{2}}},\mu^{\bullet}} \left(|\tau|=n\right) = \GWnu{| \tau|=n} \quad\mathop{ \sim}_{n \rightarrow \infty} \quad  \frac{3^{1/3}}{|\Gamma(-2/3)|} \cdot n^{-5/3}.
\end{equation}

\begin{rek}The exponent $5/3$ in \eqref{eq:equivtaille} also appears (O.~Bernardi, personal communication) when analyzing the Boltzmann distribution with exposure using generating functions and methods from the theory of analytic combinatorics.
\end{rek}

\section{Study of the percolation hull}
With the tools developed in the previous sections, we can now proceed to the proofs of our main results. We start by identifying the law of the hulls of a nicely percolated UIPT (Proposition \ref{prop:loiH}), and then connect it with the Boltzmann measure with exposure introduced in Section \ref{sec:BTWE}.

\subsection{Identification of the law of the hull of the origin}

Fix $a \in (0,1)$. Recall from \eqref{eq:charT} the construction of the Uniform Infinite Planar Triangulation (UIPT) as the distributional  local limit of uniform triangulations of  size tending to  infinity.
 Given $T_{\infty}$, we define a \emph{site percolation} (percolation in short) as a random bi-coloring of the vertices of $T_{\infty}$,  obtained by painting independently each vertex white with probability $a$ and black with probability $1-a$, see Fig.~\ref{percoex-site}.  Recall that Angel \cite{Ang03} has proved that the critical threshold parameter for percolation is almost surely $a_{c} := {1}/{2}$ and that, furthermore, at $a_{c}$ there is no percolation.  Angel also proved that on the event that the percolation is nice, the percolation interface going through the root edge is finite in all regimes (subcritical, critical and supercritical) allowing us to perform the necklace surgery. Since almost surely the UIPT has one end, only one of the two hulls $ \mathcal{H}^\circ$ and $ \mathcal{H}^\bullet$ is infinite. In the sequel, we will implicitly use the above remarks without further notice.

To stress the dependence in $ a \in (0,1)$, conditionally on the percolation on the UIPT being nice, we denote by $ \mathcal{H}^\circ_{a}$ and $ \mathcal{H}^\bullet_{a}$ respectively the white and black hulls $ \mathcal{H}^\circ$ and $ \mathcal{H}^\bullet$. We start with a useful remark based on symmetry. 
 \begin{proposition} \label{prop:symmetry} We have the following equality in distribution $ \displaystyle \big( \mathcal{H}^\circ_{a}, \mathcal{H}^\bullet_{a}\big)  \quad\mathop{=}^{(d)} \quad  \big( \mathcal{H}^\bullet_{1-a}, \mathcal{H}^\circ_{1-a}\big)$.\end{proposition}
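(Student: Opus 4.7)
The plan is a simple symmetry argument based on swapping colors and reversing the root edge. The two operations need to be combined because swapping colors alone would turn a nice percolation (root oriented $\circ\to\bullet$) into the ``wrong'' orientation $\bullet\to\circ$, while reversing the root alone would not change the colors.

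\textbf{Step 1: Invariance of the UIPT under root reversal.} I would first check that if $T_\infty$ denotes the UIPT, then $T_\infty$ has the same distribution as the triangulation $\overline{T_\infty}$ obtained from $T_\infty$ by reversing the orientation of the root edge (and updating the choice of root face correspondingly). At the finite level, the map $T\mapsto \overline{T}$ is an involutive bijection of $\mathbb{T}_n$ onto itself (the underlying unoriented rooted edge is the same, one just flips its orientation), so the uniform law on $\mathbb{T}_n$ is invariant under this operation. Passing to the local limit \eqref{eq:charT} preserves this invariance, so $T_\infty \stackrel{(d)}{=} \overline{T_\infty}$.

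\textbf{Step 2: Combined involution on nicely percolated UIPTs.} Under the percolation product measure on $T_\infty$ where each vertex is independently white with probability $a$, consider the measurable involution $\Phi$ which (i) exchanges the two colors and (ii) reverses the root edge. If $T_\infty^a$ denotes the percolated UIPT conditioned to be nice (root $\circ\to\bullet$) with parameter $a$, then Step~1 together with independence of the coloring shows that $\Phi(T_\infty^a)$ has the same law as $T_\infty^{1-a}$: the symmetry $a^{\#\mathrm{white}}(1-a)^{\#\mathrm{black}} \leftrightarrow (1-a)^{\#\mathrm{white}}a^{\#\mathrm{black}}$ is exactly achieved by color swap, and the root becomes $\bullet\to\circ$ after swap, but then $\circ\to\bullet$ after edge reversal, which is again a nice configuration.

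\textbf{Step 3: Effect on the hulls.} It now suffices to track what happens to $\mathcal{H}^\circ_a$ and $\mathcal{H}^\bullet_a$ under $\Phi$. The new white origin under $\Phi$ is the old black origin, and conversely; and a vertex is white after $\Phi$ if and only if it was black before. Hence the white cluster of the origin of $\Phi(T_\infty^a)$ is exactly the old black cluster of the old black origin, and the same identification passes to the hulls (the connected components of the complement, together with the choice of which component contains the new ``other'' origin, are preserved). Therefore
\[
\bigl(\mathcal{H}^\circ_{1-a},\mathcal{H}^\bullet_{1-a}\bigr)\bigl(\Phi(T_\infty^a)\bigr) \;=\; \bigl(\mathcal{H}^\bullet_{a},\mathcal{H}^\circ_{a}\bigr)(T_\infty^a),
\]
and combining with Step~2 gives $\bigl(\mathcal{H}^\circ_a,\mathcal{H}^\bullet_a\bigr)\stackrel{(d)}{=}\bigl(\mathcal{H}^\bullet_{1-a},\mathcal{H}^\circ_{1-a}\bigr)$.

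The only non-routine point is Step~1, the invariance of the UIPT under root-edge reversal. Everything else is bookkeeping once that is in place. I would verify this step carefully, paying attention to the standard convention that the root data consist of an oriented edge (so that reversing the orientation is well-defined as an involution on $\mathbb{T}_n$) and that the external face in the UIPT plays no asymmetric role so that the involution commutes with the local-limit topology used in \eqref{eq:charT}.
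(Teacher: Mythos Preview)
Your proof is correct and follows essentially the same color-swap symmetry as the paper, whose entire argument is the single sentence ``flipping all the colors into their opposite reverses the roles of $\mathcal{H}^\circ$ and $\mathcal{H}^\bullet$, and exchanges $a$ with $1-a$.'' You have been more careful than the paper in making explicit the root-edge reversal (your Step~1), which is indeed needed to restore the nice condition $\circ\to\bullet$ after the color swap; the paper leaves this implicit.
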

 \proof This is a consequence of the fact that  flipping all the colors into their opposite reverses the roles of $ \mathcal{H}^\circ$ and $ \mathcal{H}^\bullet$, and exchanges $a$ with $1-a$. \endproof

\begin{proposition} \label {prop:loiH} Let $h \in \mathbb{T}^B$ be a finite triangulation with boundary. Set $ n= \#  \partial h$ . For every $m \geq 1$, we have 
 \begin{eqnarray*} \Pr{ \mathcal{H}^{\circ}_{a} = h,\#\partial\mathcal{H}^\bullet_{a}=m} &=& { 72 \sqrt{6 \pi}} \cdot  12^{n} Q_{a}(h) \cdot {n+m \choose n} \cdot  12^m K_{1-a}(m)   .  \end{eqnarray*}
\end{proposition}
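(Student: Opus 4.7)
The plan is to compute the probability first in the finite uniform triangulation $T_{N}\in\mathbb{T}_{N,1}$ equipped with the same independent Bernoulli-$a$ site percolation, and then to pass to the limit $N\to\infty$ using \eqref{eq:charT}. The event $\{\mathcal{H}^{\circ}=h,\#\partial\mathcal{H}^{\bullet}=m\}$ only constrains a bounded portion of the map, namely the hull $h$ together with the $(n,m)$-necklace glued along $\partial h$, so it is a cylinder event and the local convergence (coupled with the i.i.d.~coloring) yields
$$\Pr{\mathcal{H}^{\circ}_{a}=h,\,\#\partial\mathcal{H}^{\bullet}_{a}=m}=\lim_{N\to\infty}\Pr{\mathcal{H}^{\circ}=h,\,\#\partial\mathcal{H}^{\bullet}=m\text{ in }T_{N}}.$$

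For fixed $N$, I would invoke the necklace surgery of Section~\ref{sec:necklace}: a nicely percolated $T\in\mathbb{T}_{N,1}$ realizing the event is uniquely encoded by a triple $(h,h',\nu)$, where $h'\in\mathbb{T}_{N-|h|,m}$ (since $|T|=|h|+|h'|$) and $\nu$ is one of the $\binom{n+m}{n}$ $(n,m)$-necklaces, together with a coloring of the vertices. The boundary of $h$ must be white and that of $h'$ must be black, but \emph{any} coloring of the interior vertices of $h$ and $h'$ is admissible: all holes of the white cluster inside $h$ are finite and do not contain the black origin, so they get filled in and the white hull is still $h$ regardless of the interior coloring (and similarly for $h'$). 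Summing out the interior colorings with $a+(1-a)=1$ leaves a weight $a^{\#\mathcal{E}(h)}(1-a)^{\#\mathcal{E}(h')}$ per triple, hence
$$\Pr{\mathcal{H}^{\circ}=h,\,\#\partial\mathcal{H}^{\bullet}=m\text{ in }T_{N}}=\frac{a^{\#\mathcal{E}(h)}\binom{n+m}{n}}{\#\mathbb{T}_{N,1}}\sum_{h'\in\mathbb{T}_{N-|h|,m}}(1-a)^{\#\mathcal{E}(h')}.$$

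By the very definition \eqref{def:critical} of $Q_{1-a}$, the inner sum equals $12^{m}r_{c}^{-(N-|h|)}Q_{1-a}(\mathbb{T}_{N-|h|,m})$. The asymptotics $\#\mathbb{T}_{N,1}\sim\tfrac{1}{72\sqrt{6\pi}}N^{-5/2}r_{c}^{-N}$ from \eqref{eq:equiwnp2} and $Q_{1-a}(\mathbb{T}_{N-|h|,m})\sim K_{1-a}(m)(N-|h|)^{-5/2}$ from Proposition~\ref{prop:phi} then cause the $N^{-5/2}$ factors to cancel and the powers of $r_{c}$ to combine into $r_{c}^{|h|}$. Finally, recognizing $r_{c}^{|h|}a^{\#\mathcal{E}(h)}=12^{n}Q_{a}(h)$ produces the announced formula.

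The main delicate point is the observation that the interior coloring of each hull is genuinely unconstrained once its boundary color is fixed; this is exactly what allows the weighted sum over $h'$ to collapse into the partition function $Q_{1-a}(\mathbb{T}_{N-|h|,m})$, whose asymptotics were set up in Section~\ref{sec:BTWE} precisely for this purpose. Given this structural identification, the remaining local-limit argument and the cancellation of the $N^{-5/2}r_c^{-N}$ prefactors are routine.
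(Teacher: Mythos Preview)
Your proof is correct and follows essentially the same route as the paper: compute the probability on the finite uniform triangulation $T_N$ via the necklace surgery, rewrite the resulting sum in terms of $Q_a$ and $Q_{1-a}$, and pass to the local limit using the asymptotics \eqref{eq:equiwnp2} and Proposition~\ref{prop:phi}. Your explicit remark that the interior coloring of each hull is unconstrained (so that summing over colorings yields the factor $a^{\#\mathcal{E}(h)}(1-a)^{\#\mathcal{E}(h')}$) is a point the paper leaves implicit, and is worth keeping.
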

\proof 
For every $N \geq 1$, let $T_{N}$ be a  uniform triangulation with $N$ vertices. Conditionally on $T_{N}$, sample a site percolation on $T_{N}$ with parameter $a \in (0,1)$.
On the event on which the percolation is nice, denote  respectively by $ \mathcal{H}^{\circ}_{a,N}$  and $ \mathcal{H}^{ \bullet}_{a,N}$  the white and black hulls.
Recall the notation $ \mathbb{T}_{n,p}$ for the set of all triangulations with boundary of perimeter $p$ and $n$ vertices in total.
By the necklace decomposition of Section \ref{sec:necklace}, on the event $\{ \# \partial \mathcal{H}^\bullet_{a,N} = m,\mathcal{H}^\circ_{a,N}=h\}$, the triangulation $T_{N}$ is a gluing, along a $(n,m)$-necklace, of the hull $h$ with another triangulation with boundary of perimeter $m$ totalizing $N- \# \mathrm{V}(h)$ vertices,
and such that  all the vertices of the boundary of $h$ are white and those on the boundary of the second triangulation are black.
 Hence,
 \begin{eqnarray*} && \Pr{  \# \partial \mathcal{H}^\bullet_{a,N}=m,\mathcal{H}^{\circ}_{a,N} = h} \\
 && \qquad \qquad \qquad = \frac{1}{ \# \mathbb{T}_{N,1}} a^{\#  \mathcal{E}(h)}  {n+m \choose n} \sum_{t \in \mathbb{T}_{N- \# \mathrm{V}(h),m}}(1-a)^{\# \mathcal{E}(t)}\\
 && \qquad \qquad \qquad = \frac{12^{n}}{ r_{c}^N \# \mathbb{T}_{N,1}} \frac{r_{c}^{\# \mathrm{V}(h)}a^{\#  \mathcal{E}(h)}}{12^{ \# \partial h}} {n+m \choose n} 12^m\sum_{t \in \mathbb{T}_{N- \# \mathrm{V}(h),m}}\frac{r_{c}^{\# \mathrm{V}(t)}(1-a)^{\# \mathcal{E}(t)}}{12^{ \# \partial t}}\\
 && \qquad \qquad  \qquad = \frac{12^{n}}{ r_{c}^N \# \mathbb{T}_{N,1}} Q_{a}(h) {n+m \choose n}12^m Q_{1-a}( \mathbb{T}_{N- \# \mathrm{V}(h),m}).
 \end{eqnarray*}
 Since $T_{N}$ converges locally in distribution towards the UIPT (see \eqref{eq:charT}), using Proposition \ref{prop:phi} and \eqref{eq:equiwnp2} we can take the limit as $N \to \infty$ and get the statement of the proposition.\endproof

 \subsection{The critical exponent  for the perimeter}
 
We are now ready to prove Theorem  \ref {thm:expo}.
 
 \begin {proof}[Proof of Theorem  \ref {thm:expo}]
 
We keep the notation of Section \ref {sec:scoop}, in particular recall that $\mathrm{Tree}(T) $ denotes the tree of components of a triangulation $T \in \mathbb{T}^B$ and that $  \left|\mathrm{Tree}(T) \right|= \# \partial T+1$. Note that when $a =1/2$, we have $r_{c}(2a+ \gamma)/2=1/24$. To simplify notation, set 
 \begin{eqnarray*} \tilde{K}_{n} &:=& 24^{n} \cdot K_{\text{\sfrac{1}{2}}}(n) = \frac{1}{24} \cdot  \mathsf{GW}_{\mu^{\circ}_{\text{\sfrac{1}{2}}},\mu^{\bullet}} \left[  \sum_{u \in \bullet(\tau)}  \phi(k_{u}) \mathbbm{1}_{|\tau|=n+1}\right], \\
 \tilde{Q}_{n} &:=& 24^n \cdot Q_{\text{\sfrac{1}{2}}}( \{ T : \# \partial T = n\}) = \frac{1}{24} \cdot \mathsf{GW}_{\mu^{\circ}_{\text{\sfrac{1}{2}}},\mu^{\bullet}} \left( |\tau|=n+1\right).  \end{eqnarray*}
This implies that
\begin{eqnarray}
\label {eq:asQK}  \qquad \tilde{K}_{n} \quad\mathop{ \sim}_{n \rightarrow \infty} \quad   \frac{ 1}{8 \cdot 3^{5/6} \cdot \Gamma(-2/3)^2 \cdot\sqrt{ 2\pi}} \cdot n^{1/3}\quad \mbox{and } \quad \tilde{Q}_{n}   \quad\mathop{ \sim }_{n \rightarrow \infty} \quad  \frac{1}{8 \cdot 3^{2/3} \cdot |\Gamma(-2/3)|}  \cdot n^{-5/3}.
\end{eqnarray}
Indeed, the first statement follows from \eqref{eq:constante}, while the second one is a consequence of Proposition \ref{prop:gwQ} combined with \eqref{eq:equivtaille}.  Next, using Proposition \ref{prop:symmetry}, write for $n \geq 1$
\begin{eqnarray*}
 &&\hspace{-2cm}\P( \# \partial \mathcal{H}^{\circ}_{\text{\sfrac{1}{2}}} = n) \\
 &=&\P( \# \partial \mathcal{H}^{\circ}_{\text{\sfrac{1}{2}}} = n,|\mathcal{H}^\bullet_{\mbox{\sfrac{1}{2}}}| = \infty)+ \P( \# \partial \mathcal{H}^{\circ}_{\text{\sfrac{1}{2}}} = n, |\mathcal{H}^\bullet_{\text{\sfrac{1}{2}}}| < \infty) \\
  &\underset{ \mathrm{Prop}. \ref{prop:symmetry}}{=}& \P( \# \partial \mathcal{H}^{\circ}_{\text{\sfrac{1}{2}}} = n,|\mathcal{H}^ \bullet_{\text{\sfrac{1}{2}}}| = \infty)
+\P( |\mathcal{H}^{\circ}_{\text{\sfrac{1}{2}}}| < \infty, \# \partial \mathcal{H}^\bullet_{\text{\sfrac{1}{2}}} = n) \\
&=& \sum_{m=1}^\infty \sum_{\begin{subarray}{c}h \in \mathbb{T}^B ,|h|< \infty \\ \# \partial h =n \end{subarray}} \P(  \mathcal{H}^{\circ}_{\text{\sfrac{1}{2}}} = h, \# \partial \mathcal{H}^ \bullet_{\text{\sfrac{1}{2}}} = m) + \sum_{m=0}^\infty \sum_{\begin{subarray}{c}h \in \mathbb{T}^B,|h|< \infty \\ \# \partial h =m \end{subarray}} \P(  \mathcal{H}^{\circ}_{\text{\sfrac{1}{2}}} = h, \# \partial \mathcal{H}^ \bullet_{\text{\sfrac{1}{2}}} = n) \\ & \underset{\mathrm{Prop}.\,\ref{prop:loiH}}{=}& 72 \sqrt{6\pi} {n+m \choose m} 12^{n+m} \cdot \\ && \left( \sum_{m=1}^\infty Q_{\text{\sfrac{1}{2}}}(\{ \mathsf{T} \in \mathbb{T}^B ; \# \partial \mathsf{T} =n\})K_{\text{\sfrac{1}{2}}}(m) + \sum_{m=0}^\infty Q_{\text{\sfrac{1}{2}}}(\{ \mathsf{T} \in \mathbb{T}^B ; \# \partial \mathsf{T} =m\}) K_{\text{\sfrac{1}{2}}}(n) \right)\\
&=& 72 \sqrt{6\pi} \sum_{m=0}^\infty  {n+m \choose n} 2^{-n-m} \big(\tilde{Q}_{m} \tilde{K}_{n} + \tilde{Q}_{n} \tilde{K}_{m}\big),\end{eqnarray*}
with the convention that $ \tilde{K}_{0} =0$.
Now, for every $m \geq 0$ and $u \in \R$, set
$$F_{n}(u)= {2n+ \fl{u \sqrt {n}} \choose n}  \frac{ \sqrt{n}}{2^{2n+ \fl{u \sqrt {n}}}}.$$
It is a simple matter to check that for fixed $ u \in \R$, $F_{n}(u) \rightarrow  {  } e^{- u^{2}/4}/\sqrt { \pi}$ as $n \rightarrow \infty$ and that there exists a constant $C>0$ such that $F_{n}(u) \leq C 2^{-|u|}$ for every $n \geq 2$ and $ u \in \R$. Combined with \eqref{eq:asQK}, the dominated convergence theorem implies that
 \begin{eqnarray*}
 \sum_{ m= 0}^\infty {n+m \choose n}  \frac{1}{2^{m+n}} \frac{\tilde{K}_{m}\tilde{Q}_{n} + \tilde{K}_{n} \tilde{Q}_{m}}{ 4 \tilde{K}_{n}\tilde{Q}_{n}} &=& \int_{- \sqrt{n}}^{ \infty} du \ F_{n}(u) \frac{\tilde{K}_{n+ \fl{u \sqrt{n}}} \tilde{Q}_{n}+ \tilde{K}_{n}\tilde{Q}_{n+ \fl{u \sqrt{n}}}}{ 4 \tilde{K}_{n}\tilde{Q}_{n}}  \\
 & \displaystyle \mathop{\longrightarrow}_{n \rightarrow \infty} &  \frac{1}{2} \cdot \int_{- \infty}^{ \infty} \frac{1}{  \sqrt { \pi}} e^{- u^{2}/4} \ du=  {1}.
 \end{eqnarray*}
 Hence 
 $$  \P( \# \partial \mathcal{H}^{\circ}_{\text{\sfrac{1}{2}}} = n)   \quad\mathop{ \sim}_{n \rightarrow \infty} \quad 288 \sqrt{6 \pi}   \cdot \tilde{K}_{n}\tilde{Q}_{n}.$$
 Thus, by \eqref{eq:asQK}, we get
\begin{eqnarray*}
 \P( \# \partial \mathcal{H}^{\circ}_{\text{\sfrac{1}{2}}} = n)   & \sim  & 32 \cdot 3^{2} \cdot \sqrt{6 \pi} \cdot  \frac{ 1}{8 \cdot 3^{5/6} \cdot \Gamma(-2/3)^2 \cdot\sqrt{ 2\pi}}\cdot n^{1/3} \cdot  \frac{1}{8 \cdot 3^{2/3} \cdot |\Gamma(-2/3)|}   \cdot n^{-5/3}. \\
 &=&  \frac{ {3}}{ {2} \cdot |\Gamma(-{2}/{3})|^3}  \cdot n^{-4/3}.\end{eqnarray*}
 This completes the proof.
 \end {proof}

 \subsection {Scaling limits}
 
 We are now ready to prove Theorem \ref{thm:scalingperco}, which describes the scaling limits of the boundary of large percolation clusters in the UIPT. We start by recalling the definition of the Gromov--Hausdorff topology (see \cite{BBI01} for additional details).

   \subsubsection {The Gromov--Hausdorff Topology}
\label{sec:GH}

If $(E,d)$ and $(E',d')$ are two  compact  metric spaces, the Gromov--Hausdorff distance between
${E}$ and ${E'}$  is defined by
 \begin{eqnarray*} \op{d_{GH}}({E},{E'}) &=& \inf \left\{\op{d}_{\op{H}}^F(\phi(E),\phi'(E'))\right\}, \end{eqnarray*} where the infimum is taken over all choices of metric space $(F,\delta)$ and  isometric embeddings $\phi : E \to F$ and $\phi'  : E' \to F$ of $E$ and $E'$ into $F$, and where $ \mathrm{d}_{ \mathrm{H}}^F$ is the Hausdorff distance between compacts sets in $F$. The  Gromov--Hausdorff distance is indeed a metric on the space of all isometry classes of compact metric spaces, which makes it separable and complete.
 
 An alternative practical definition of  $\op{d_{GH}}$ uses correspondences. A correspondence between two  metric spaces $(E,d)$ and $(E',d')$ is by definition a subset $\mathcal{R} \subset E\times E'$ such that, for every $x_{1} \in E$, there exists at least one point $x_{2}\in E'$ such that $(x_{1},x_{2}) \in \mathcal{R}$ and conversely, for every $y_{2}\in E'$, there exists at least one point $y_{1}\in E$ such that $(y_{1},y_{2}) \in \mathcal{R}$. The distortion of the correspondence $\mathcal{R}$ is defined by 
 $$ \op{dis}(\mathcal{R}) = \sup\big\{|d(x_{1},y_{1})-d'(x_{2},y_{2})| : (x_{1},x_{2}),(y_{1},y_{2}) \in \mathcal{R} \big\}.$$ The Gromov--Hausdorff distance can then be expressed in terms of correspondences by the formula
 \begin{equation}
 \label{GHcorres}
 \op{d_{GH}}({E},{E'})= \frac{1}{2} \inf_{\mathcal{R} \subset E\times E'} \big\{\hspace{-0.5mm}\op{dis}(\mathcal{R})\big\},
 \end{equation} where the infimum is over all correspondences $\mathcal{R}$ between $(E,d)$ and $(E',d')$.

\subsubsection{Discrete looptrees and {scooped-out}  triangulations}
The main ingredient for  proving Theorems \ref{thm:scalingperco} and \ref{thm:nearcrit} is a relation between  the boundary of a triangulation and the discrete looptree associated with its tree of components,  which we now describe. To this end, we need to introduce a slightly modified discrete looptree.  

Let $\tau$ be a plane tree and recall {from the Introduction} the construction of $ \mathsf{Loop}( \tau)$. We define $ \overline{\mathsf{Loop}}(\tau)$ as the graph obtained from $\mathsf{Loop}(\tau)$ by contracting the edges  linking two vertices $u$ and $v$ such that $v$ is the last child of $ u$ in lexicographical order in $\tau$ (meaning that we identify such vertices). 

Recall from Section \ref{sec:scoop} the definition of scooped-out triangulation $\mathsf{Scoop}(T)$ and the tree $\mathsf{Tree}( {T})$ for a triangulation with boundary $ {T}$, and from  Section \ref{section:bijection} the bijection $\mathcal{G}$.

\begin{lemma}  \label{lem:claim} Let  $ T \in \mathbb{T}^B$ be a finite triangulation with boundary. Then the graphs $\overline{ \mathsf{Loop}}\Big(  \mathcal{G}\big(\mathsf{Tree}( T)\big)\Big)$ and $\mathsf{Scoop}( T)$ are equal.
\end{lemma}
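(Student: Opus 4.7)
The plan is to unravel the three constructions $\mathsf{Tree}$, $\mathcal{G}$, and $\overline{\mathsf{Loop}}$, and to verify that their composition coincides with $\mathsf{Scoop}(T)$ as a planar multigraph. Throughout, set $\tau = \mathsf{Tree}(T)$.

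\textbf{Vertex sets.} By construction of $\mathsf{Tree}$, the vertices of $\mathsf{Scoop}(T)$ are exactly $\mathcal{E}(T)$, which is in canonical bijection with $\circ(\tau)$ by \eqref{eq:nombreexpo}. On the other side, by the description of $\mathcal{G}$, the leaves of $\mathcal{G}(\tau)$ are precisely the white vertices of $\tau$, whereas each $b \in \bullet(\tau)$ with $k_b$ children in $\tau$ becomes a non-leaf of $\mathcal{G}(\tau)$ with $k_b+1$ children. The contraction rule defining $\overline{\mathsf{Loop}}$ merges each vertex with its last child; iterating, every non-leaf of $\mathcal{G}(\tau)$ is absorbed into the unique leaf reached by repeatedly following last-child edges downward. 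Hence the vertex set of $\overline{\mathsf{Loop}}(\mathcal{G}(\tau))$ is canonically in bijection with the leaves of $\mathcal{G}(\tau)$, that is, with $\circ(\tau) \simeq \mathcal{E}(T)$.

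\textbf{Cycle families and their lengths.} The graph $\mathsf{Scoop}(T)$ is a union of simple cycles glued at pinch points, one per simple triangulation component of $T$. These components are in bijection with $\bullet(\tau)$, and the cycle corresponding to $b$ has length $\#\partial T_b = \deg(b) = k_b+1$ by \eqref{eq:sumdegblack}. Symmetrically, each non-leaf $b \in \bullet(\tau)$ of $\mathcal{G}(\tau)$ contributes to $\mathsf{Loop}(\mathcal{G}(\tau))$ a cycle of length $k_b+2$ through $b$ and its $k_b+1$ children in $\mathcal{G}(\tau)$; after contracting the last-child edge in $\overline{\mathsf{Loop}}$, this becomes a cycle of length $k_b+1$. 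Cycle families and lengths therefore match.

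\textbf{Cyclic order.} It remains to verify that, for each $b \in \bullet(\tau)$, the cycle attached to $b$ in $\overline{\mathsf{Loop}}(\mathcal{G}(\tau))$ visits the same white vertices in the same cyclic order as the boundary of the simple component $T_b$ in $\mathsf{Scoop}(T)$. The white vertices on $\partial T_b$ are, in the cyclic order dictated by the planar embedding of $T$, the parent $p$ of $b$ followed by the white children $w_1,\ldots,w_{k_b}$ of $b$ in the lexicographic order of $\tau$. Using the explicit description of $\mathcal{G}$, one checks that the $k_b+1$ children of $b$ in $\mathcal{G}(\tau)$ are produced, in the planar cyclic order around $b$, by the operation at $p$ (yielding the neighbor that, after cascading contractions, collapses to $p$) and by the operations at the $w_i$'s (yielding the neighbors collapsing to the respective $w_i$), in exactly the matching cyclic order. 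This is a local planar diagrammatic check and is the only genuine step of the proof; it is best carried out by inspection of Figures~\ref{scoop} and \ref{fig:bij}. Once it is done, the equality of the two graphs is established.
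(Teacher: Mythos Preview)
Your argument is correct and follows essentially the same route as the paper, which gives no formal proof at all but simply declares the lemma ``should be clear'' from two figures. You have added useful scaffolding---the vertex-set bijection via last-child contractions and the cycle-length count $k_b+2 \mapsto k_b+1$---that the paper omits, but like the paper you ultimately defer the decisive planar cyclic-order check to figure inspection, so the two approaches coincide in substance.
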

\proof[Proof drawings.] A formal proof of this would not be enlightening and this property should be clear on {Fig.~\ref{fig:twig} and Fig.~\ref{fig:twig2}}  below.
\begin{figure}[!h]
  \begin{center}
  \includegraphics[width=0.9 \linewidth]{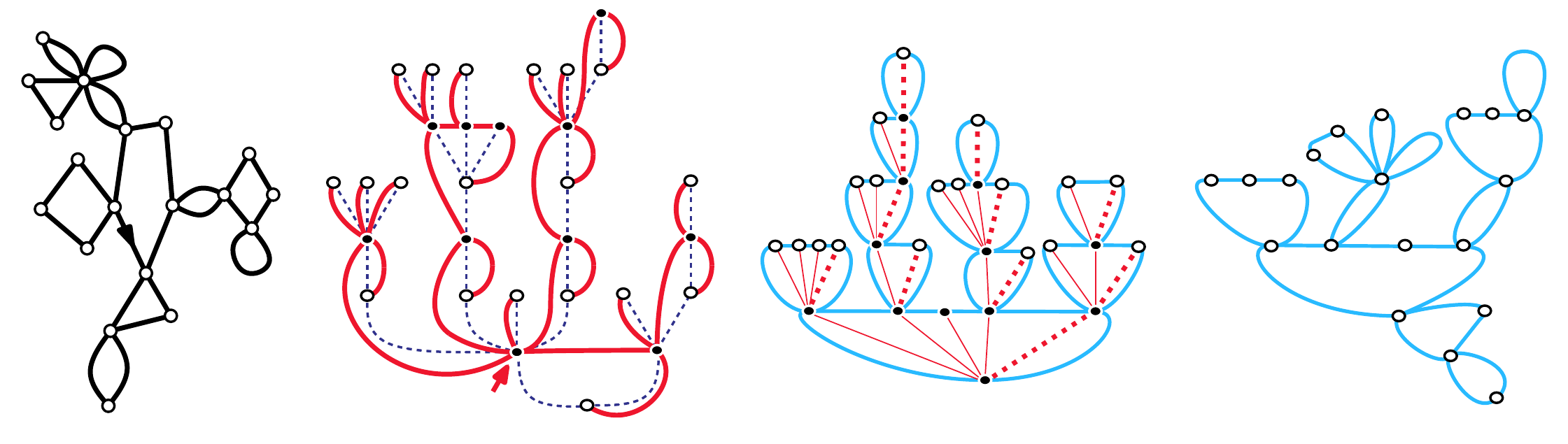}
  \caption{{ The first figure represents a scooped-out triangulation $\mathsf{Scoop}(T)$, the second one represents its associated trees $ \mathsf{Tree}(T)$ (with dashed edges) and $ \mathcal{G}( \mathsf{Tree}( T))$ (in bold red), the third one represents ${\mathsf{Loop}}(\mathcal{G}( \mathsf{Tree}( T)))$ (in light blue). Finally, the last figure represents  $ \overline{\mathsf{Loop}}(\mathcal{G}( \mathsf{Tree}( T)))$ (which is exactly $\mathsf{Scoop}(T)$).}  \label{fig:twig}}
 \end{center}
  \end{figure}
     \begin{figure}[!h]
  \begin{center}
  \includegraphics[width=0.9 \linewidth]{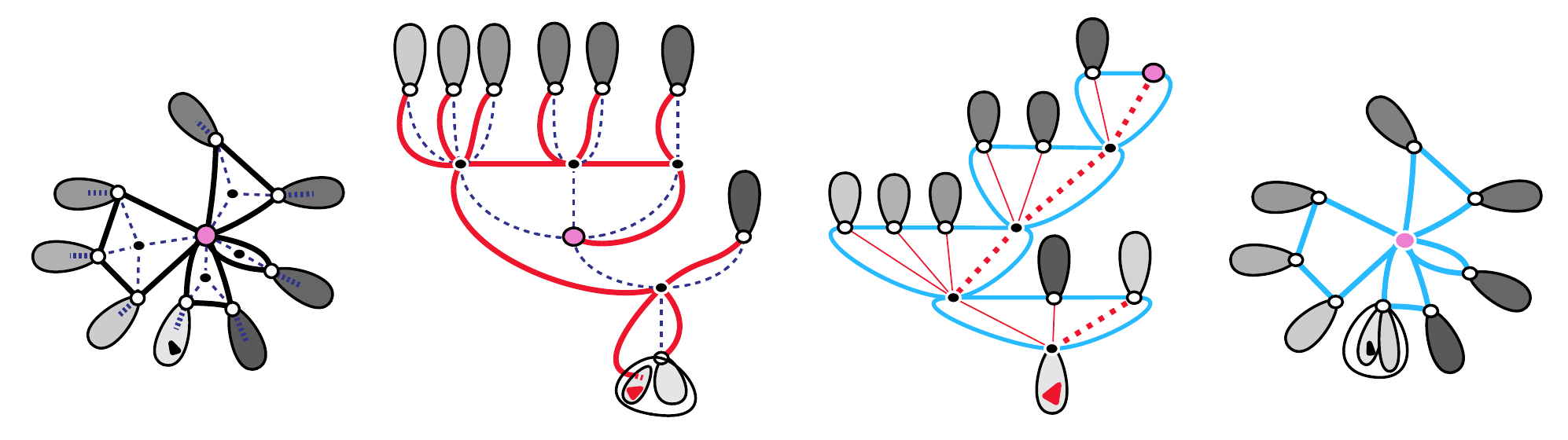}
  \caption{The same elements as in Fig.~\ref {fig:twig}, but locally around a pinch-point of $T$.
  \label{fig:twig2}}
  \end{center}
  \end{figure} \endproof
  
  \subsubsection {Proof  of Theorems \ref{thm:scalingperco} and \ref{thm:nearcrit}}

Fix $a \in (0,1)$ and an integer $n \geq 1$. Let $\mathcal{H}_a^{\circ}(n)$ denote the random variable $\mathcal{H}^\circ_{a}$  conditioned on the event $  \{ \# \partial \mathcal{H}^\circ_{a}=n, |\mathcal{H}^\circ_{a}| < \infty \}$.
From Proposition \ref{prop:loiH}, it follows that the distribution of $\mathcal{H}^\circ_{a}(n)$  is the probability measure $Q_{a}( \,  \cdot \,  | \,   \# \partial T =n)$. Hence, by Proposition \ref{prop:gwQ} the tree of components $\mathsf{Tree}(\mathcal{H}_a^{\circ}(n))$ is distributed as  a $\mathsf{GW}_{\mu^{\circ}_{a},\mu^{\bullet}}$ tree conditioned on having $n+1$ vertices. Set $ \tau^n_a= \mathcal{G}(\mathsf{Tree}(\mathcal{H}_a^{ \circ} (n)))$. By Proposition \ref{prop:onetype}, $ \tau^n_a$ is distributed as a $ \mathsf{GW}_{ \nu_{a}}$ tree conditioned on having $n+1$ vertices.

\medskip

\begin{proof}[Proof  of Theorem \ref{thm:scalingperco}]
\emph {Case $1/2<a<1$.} Denote by $ \Delta_{n}$ the maximal degree of $ \tau^{n}_{a}$ and let $u^{ \star}_{n} \in \tau^{n}_{a}$  be a vertex with degree $ \Delta_{n}$ (this vertex is asymptotically unique by \cite[Theorem 5.5]{JS11}). Hence, by \cite[Theorem 5.5]{JS11},
$$ \frac{ \Delta_{n}}{n}  \quad\mathop{\longrightarrow}^{( \P)}_{n \rightarrow \infty} \quad 1- \sum_{i=0}^ \infty i \nu_{a}(i)=  \frac{2a-1}{ \sqrt{3}-1+2a}.$$
Since $ \mu^{ \circ}_{a}$ has an exponential tail, a simple argument shows that $u^{ \star}_{n}$ is a black vertex with probability tending to one as $n \rightarrow \infty$. Informally, this means that a loop of length roughly $ \Delta_{n}$ appears in  $ \partial \mathcal{H}_a^{ \circ} (n)$. By \cite[Corollary 2]{K12+},  the maximal size of the connected components of $ \tau^n_a \backslash  \{ u^{ \star}_{n}\}$, divided by $n$, converges in probability towards $0$ as $n \rightarrow \infty$. By properties of the bijection $ \mathcal{G}$, this means that the maximal size of a cluster branching on the macroscopic loop corresponding to the black vertex $ u^{ \star}_{n}$, {divided by $n$, converges in probability towards $0$ as $n \rightarrow \infty$}, and immediately implies that
$$ \frac{1}{n} \cdot \partial \mathcal{H}_a^{ \circ} (n)  \quad\mathop{\longrightarrow}^{(d)}_{n \rightarrow \infty} \quad  \frac{2a-1}{ \sqrt{3}-1+2a} \cdot \mathcal{C}_{1}.$$

\emph {Case $a=1/2$.} First of all recall that $ \partial \mathcal{H}^\circ _{a}(n)$, viewed as a metric space, is the same as  $\mathsf{Scoop}( \mathcal{H}^\circ_{a}(n))$, viewed as a metric space. We shall thus {work with}  the latter.  Recall from \eqref{eq:nu} that in the  case $a = 1/2$,   $ \nu= \nu_{\text{\sfrac{1}{2}}}$ is critical and  $ \nu(k)   \sim \frac{ \sqrt{3}}{4 \sqrt{\pi}} k^{-5/2}$ as $k \rightarrow \infty$.  Next, since the longest path in $ \tau^n_{\text{\sfrac{1}{2}}}$ containing only vertices {that are identified in the definition of $\overline{\mathsf{Loop}}(\tau^n_{\text{\sfrac{1}{2}}})$}  is bounded by the height $\mathsf{H}( \tau^n_{\text{\sfrac{1}{2}}})$ {of the tree $\tau^n_{\text{\sfrac{1}{2}}}$}, this implies that
$$\mathrm{d_{GH}} \left(\overline{\mathsf{Loop}}(\tau^n_{\text{\sfrac{1}{2}}}), \mathsf{Loop}(\tau^n_{\text{\sfrac{1}{2}}}) \right)   \leq 2 \mathsf{H}(\tau^n_{\text{\sfrac{1}{2}}}).$$ Since $\tau^n_{\text{\sfrac{1}{2}}}/n^{1/3}$ converges towards the stable tree of index $3/2$ (see \cite{Du03} {or \cite{K11}}), we get that the quantity $\mathsf{H}( \tau^n_{\text{\sfrac{1}{2}}})/ n^{2/3}$ converges in probability towards $0$ as $n \rightarrow \infty$. These observations combined with \eqref{eq:invprinc} show that $$ n^{-2/3} \cdot \overline{\mathsf{Loop}}( \tau^n_{\text{\sfrac{1}{2}}}) \quad \xrightarrow[n\to\infty]{(d)}\quad  3^{1/3} \cdot \mathscr{L}_{ 3/2},$$  {where the convergence holds} in distribution for the Gromov--Hausdorff topology. Finally, we use Lemma \ref{lem:claim} to replace $\overline{\mathsf{Loop}}( \tau^n_{\text{\sfrac{1}{2}}})$ by $ \mathrm{Scoop}( \mathcal{H}^\circ_{\text{\sfrac{1}{2}}}(n))$ in the last display and get the desired result.

\emph {Case $a<1/2$.} In this case, the tree $\tau^{n}_{a}$ is a supercritical Galton--Watson tree conditioned on having $n+1$ vertices. {We perform a standard exponential tilting of the offspring distribution in order to reduce to the critical case as follows}. Recall that $F_{a}$ denotes the generating function of the offspring distribution $\nu_{a}$. For every $\lambda \in (0,1)$ it is easy to see  that $\tau^n_{a}$ has the same law as a Galton--Watson tree whose offspring distribution generating function is $ z \mapsto F_{a}(\lambda z)/F_{a}(\lambda)$ {conditioned on having $n+1$ vertices}, see e.g. \cite{Ken75}.
We now choose $ \lambda_{a} \in (0,1)$ be the unique positive real number such that \begin{equation} \label{def:lambdaa} \lambda_{a} \cdot F_{a}'( \lambda_{a})=F_{a}( \lambda_{a}).  \end{equation} In other words, { $\lambda_{a}$ is chosen such that} the offspring distribution $ \widetilde{ \nu}_{a}$ whose generating function is $z \mapsto F_{a}( \lambda_{a}z)/F_{a}(\lambda_{a})$ is critical {(see e.g. \cite[Section 4]{Jan12} for a proof that $ \lambda_{a}$ exists and is unique)}. We write $\widetilde{\tau}^n_{a}$ for a $\widetilde{\nu}_{a}$-Galton--Watson tree conditioned on having $n+1$ vertices. Since $ \widetilde{ \nu}_{a}$ has small exponential moments, we can apply \cite[Theorem 14]{CHK13} and get 
  \begin{eqnarray*} \frac{1}{ \sqrt{n}}\cdot \overline{\mathsf{Loop}}( \tau^{n}_{a}) \overset{(d)}{=} \frac{1}{ \sqrt{n}}\cdot \overline{\mathsf{Loop}}( \widetilde{\tau}^{n}_{a})  & \xrightarrow[n\to\infty]{(d)} &    { \frac{2}{ \widetilde{\sigma}_{a}}} \cdot   \frac{1}{4}\left( \widetilde{\sigma}_{a}^2+ \widetilde{ \nu}_{a}(2 \Z_{+}) \right) \cdot  \mathcal{T}_{ \mathbf{e}}, \end{eqnarray*}
where $  \widetilde{\sigma}_{a}^2$  is the variance of $\widetilde{ \nu}_{a}$ and $\widetilde{ \nu}_{a}(2 \Z_{+}) =\widetilde{ \nu}_{a}(0)+\widetilde{ \nu}_{a}(2)+\widetilde{ \nu}_{a}(4)+ \cdots$. Applying Lemma \ref{lem:claim}, we have established Theorem \ref {thm:scalingperco} in the case $a<1/2$ with $C_{a}= { \frac{2}{ \widetilde{\sigma}}} \cdot   \frac{1}{4}\left( \widetilde{\sigma}_{a}+ \widetilde{ \nu}_{a}(2 \Z_{+}) \right)$.  \end{proof}

\begin{proof}[Proof of Theorem \ref{thm:nearcrit} ]
It follows from the proof of Theorem \ref{thm:scalingperco} that:
 \begin{equation}
 \label{eq:Ca}C_{a}=\displaystyle \frac{2a-1}{ \sqrt{3}-1+2a} \quad a \in (1/2,1),  \qquad C_{a}= { \frac{2}{ \widetilde{\sigma}_{a} }} \cdot   \frac{1}{4}\left( \widetilde{\sigma}_{a}^2+ \widetilde{ \nu}_{a}(2 \Z_{+}) \right) \quad a \in (0,1/2).
 \end{equation} The asymptotic estimate as $a \downarrow 1/2$ immediately follows. As $a \uparrow 1/2$, we have $\widetilde{\sigma}_{a} \rightarrow \infty$, so that $ C_{a} \sim \widetilde{\sigma}_{a}/2$. Next,  using the exact expression of $F_{a}$ given in \eqref{eq:nua}, simple calculations show that  $ \lambda_{a}=c_{a}^{1/3}+c_{a}^{-1/3}-1$ with
  $$c_{a}=8 (1-a) a-4 (1-2a) \sqrt{a-a^2} i-1.$$
  Note that $c_a$ is a complex number, but that $\lambda_a$ is real. In particular we have  $$\lambda_{a}  =1- \frac{16}{9} \cdot (1/2-a)^2 +o((1/2-a)^2), \qquad {F_{a}( \lambda_{a})}  \mathop{\longrightarrow}_{a \uparrow 1/2}   1, \qquad  F_{a}''( \lambda_{a})  \mathop{ \sim}_{a \uparrow 1/2}  \frac{3 \sqrt {3}}{16} \cdot \frac{1}{1/2-a}.$$
For the last asymptotic estimate, we use the fact that
$$F''_{a}(z)= \frac{3}{4(2a-1+ \sqrt{3})} \cdot \frac{1}{ \sqrt{1-z}}$$
Since $\widetilde{\sigma}^2_a =  {\lambda_{a}^2 \cdot F_{a}''( \lambda_{a})}/{F_{a}( \lambda_{a})}$, the conclusion follows.
\end{proof}

 \section{Comments}
 
 \subsection{Critical Boltzmann triangulations} 
 Instead of working with the infinite model of the UIPT we can also consider another natural model of random triangulations: the critical Boltzmann measure is the probability measure on triangulations which assigns a probability$$  \P_{b}(T)=\frac{1}{ \sum_{ t \in  \mathbb{T}} r_{c}^{\# \mathrm{V}(t)}} \cdot r_{c}^{\# \mathrm{V}(T)}$$
to every finite triangulation $T$, where $r_{c}=1/ \sqrt {432}$ and $ \mathbb{T}$ is the set of all finite triangulations. Note that $\sum_{ t \in  \mathbb{T}} r_{c}^{\# \mathrm{V}(t)}=(192 \sqrt{3})^{-1}$ by \eqref{eq:Wtildeexact}. With this model, the underlying triangulation $ \mathsf{T}$ is always finite, so that both the white and black hulls $ \mathcal{H}^{\circ}_{a}$ and $ \mathcal{H}^{\bullet}_{a}$ are finite. It is straightforward to adapt Proposition \ref{prop:loiH} in order to get the following:

\begin{proposition} \label {prop:loiH2} Let $h$ be a finite triangulation with boundary of perimeter $n$. We have 
 \begin{eqnarray*} \PrB{ \mathcal{H}^{\circ}_{a}=h} &=& 192 \sqrt{3} \cdot 12^n Q_{a}(h) \sum_{m=0}^\infty 12^m  {n+m \choose n} {Q_{1-a}( \{ T : \# \partial T = m\})}.
 \end{eqnarray*}
 \end{proposition}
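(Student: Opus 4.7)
\medskip

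The plan is to run the same necklace surgery argument as in Proposition \ref{prop:loiH}, with the crucial simplification that no local limit is needed: since $\P_b$ is already a probability measure on finite triangulations, both hulls are automatically finite and the percolation interface through the root (when the percolation is nice) is finite as well, so one can perform the surgery of Section \ref{sec:necklace} directly on a sample from $\P_b$.

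Given a finite triangulation with boundary $h$ of perimeter $n$ and an integer $m \geq 0$, I would express $\P_b(\mathcal{H}^\circ_a = h, \#\partial \mathcal{H}^\bullet_a = m)$ as a sum over all triangulations $T'$ with (general) boundary of perimeter $m$, where the gluing of $h$, $T'$ and any $(n,m)$-necklace (there are $\binom{n+m}{n}$ of them) reconstructs a percolated triangulation $T$ realizing the event. Since the necklace has no internal vertex, the total vertex count is $\#V(h) + \#V(T')$, and the white/black coloring contributes $a^{\#\mathcal{E}(h)}(1-a)^{\#\mathcal{E}(T')}$. Pulling out the normalizing constant $\sum_{t \in \mathbb{T}} r_c^{\#V(t)} = (192\sqrt{3})^{-1}$, this gives
\begin{eqnarray*}
\PrB{\mathcal{H}^\circ_a = h,\, \#\partial \mathcal{H}^\bullet_a = m} &=& 192\sqrt{3} \cdot \binom{n+m}{n} \sum_{\substack{T' \in \mathbb{T}^B \\ \#\partial T' = m}} r_c^{\#V(h) + \#V(T')} a^{\#\mathcal{E}(h)} (1-a)^{\#\mathcal{E}(T')}.
\end{eqnarray*}

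The next step is cosmetic: using the definition \eqref{def:critical} of the measure $Q_a$, one has $r_c^{\#V(h)} a^{\#\mathcal{E}(h)} = 12^n \, Q_a(h)$ and, summing over $T'$, $\sum_{\#\partial T' = m} r_c^{\#V(T')} (1-a)^{\#\mathcal{E}(T')} = 12^m \, Q_{1-a}(\{T : \#\partial T = m\})$. Substituting these identities yields the term of index $m$ in the statement. Summing over $m \geq 0$ (since in the Boltzmann model the black hull is always finite, no separate supercritical contribution arises as in the proof of Theorem \ref{thm:expo}) then gives the claimed formula.

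There is no genuine obstacle: the argument is a direct specialization of the computation performed in the proof of Proposition \ref{prop:loiH}, where the ratio $r_c^{N} / \#\mathbb{T}_{N,1}$ is replaced by the explicit normalizing factor $192\sqrt{3}$ coming from \eqref{eq:Wtildeexact}, and the $N \to \infty$ passage to the local limit is simply dropped. The only mild point to keep in mind is the bookkeeping of boundary/interior vertices under the surgery, which is the same as in Section \ref{sec:necklace} and forces the total vertex count to split as $\#V(h) + \#V(T')$.
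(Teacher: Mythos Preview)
Your argument is correct and is exactly the adaptation of Proposition~\ref{prop:loiH} that the paper has in mind: perform the necklace surgery directly under $\P_b$, use that the normalizing constant is $\sum_{t\in\mathbb{T}}r_c^{\#\mathrm{V}(t)}=(192\sqrt{3})^{-1}$, and rewrite the vertex and exposure weights via \eqref{def:critical} to recover $Q_a(h)$ and $Q_{1-a}$. The paper itself gives no proof beyond calling it ``straightforward to adapt Proposition~\ref{prop:loiH}'', and your write-up fills this in accurately.
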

 As for the UIPT, this implies that the random variable $\mathcal{H}^{\circ}_{a}$, under $ \P_{b}$, conditioned on the event $  \{ \# \partial \mathcal{H}^\circ_{a}=n\}$, is distributed according to $Q_{a}( \,  \cdot \,  | \,   \# \partial T =n)$. Hence Theorems \ref{thm:scalingperco} and \ref{thm:nearcrit} remain true without changes. 
To get an analog of Theorem \ref{thm:expo},  we similarly compute  \begin{eqnarray*} \PrB{ \#\partial \mathcal{H}^{\circ}_{a_{c}} = n} &=& 192 \sqrt{3} \cdot \tilde{Q}_{n}  \cdot   \sum_{m \geq 0}  {n+m \choose n} \frac{1}{2^{m+n}} \tilde{Q}_{m}\\
&  \underset{n \rightarrow \infty}{ \sim} &   384 \sqrt{3} \cdot \tilde{Q}_{n} ^2= \frac{2 \cdot 3^{1/6}}{\Gamma(-2/3)^2} \cdot n^{-10/3}. \end{eqnarray*}

\begin{rek}\label{rek:boltz} It is useful to note that the exponent $10/3$ appearing in the last formula is obtained as  \begin{eqnarray} \label{eq:bolts} 2 \left( 1 + \frac{1}{\alpha}\right)\end{eqnarray} with $\alpha = 3/2$. {Note also that} $1 + {1}/\alpha$ is exactly the exponent appearing in the probability that a Galton--Watson tree with an offspring distribution in the domain of attraction of an $\alpha$-stable law  has a large progeny (see Proposition \ref {prop:general} (i) for a precise statement). 
\end{rek}

\subsection{Type II triangulations}
\label{sec:universality}
 In this work, we focused on general triangulations, but similar results can be derived in the context of type II triangulations, that are triangulations where no loops are allowed. The approach is exactly the same, the necklace surgery and the tree representation of clusters work alike and so we only give the main intermediate enumerative results. In particular, if  $ \overline{W}$ denotes the generating function of type II triangulations with simple boundary with weight $x$ per \emph{inner} vertex and $y$ then $ \overline{W}(x,y)$ is also well-known (see \cite{GJ83}). In particular, the radius of convergence of $\overline{W}$ as a function of $x$ is $\overline{r}_{c} = {2}/{27}$, and we have
  \begin{eqnarray*} {\overline{W}}(\overline{r}_{c},y) &=& \frac{y}{2}+\frac{(1-9 y)^{3/2}-1}{27}. \end{eqnarray*}
As previously,  for every integer $k \geq 1$, set $\overline{q}_{k} = [y^k] { \overline{W}}(\overline{r}_{c},y) / 9^k$.  Note that $\overline{q}_{1}=0$. In this case, the tree of components of the white hull is described similarly as in Proposition \ref{prop:gwQ} by the two offspring distributions $\overline{\mu}^{\circ}_{a}$ and $ \overline{\mu}^{\bullet}$ defined by  $$\overline{\mu}^{\bullet}(j) = \frac{\overline{q}_{j+1}}{\overline{Z}_{\bullet}}, \qquad   \overline{\mu}^{\circ}_{a}(j) = (1-\xi)\xi^j  \qquad (j \geq 0),$$
 where  $\overline{Z}_{ \bullet}=1/54$ and $ \xi=1/(1+4a)$. Note that $\overline{\mu}^{\bullet}(0)=0$, which is consistent with the fact that we are working with type II triangulations, since black vertices with no children of the tree of components are in bijection with loops.  In addition, if $ \overline{\nu}_{a}$ is the  image of $ \mathsf{GW}_{\overline{\mu}^{\circ}_{a},\overline{\mu}^{\bullet}}$ by $ \mathcal{G}$, then
$$ \sum_{i \geq 0} \overline{\nu}_{a}(i) z^i=  \frac{4 a-2+3z+2 (1-z)^{3/2}}{4 a+1}.$$
In particular the mean of $\overline{\nu}_{a}$ is $3/(1+4a)$, so that $ \overline{\nu}_{a}$ is critical if and only if $a=1/2$. When $a=1/2$, to simplify notation we write $ \overline{\nu}= \overline{\nu}_{\text{\sfrac{1}{2}}}$. Note that then: 
$$\sum_{i \geq 0} \overline{\nu}(i) z^i=z+\frac{2}{3}(1-z)^{3/2}, \qquad \qquad \nu(k)    \quad\mathop{ \sim }_{k \rightarrow \infty}  \quad  \frac{ 1}{2 \sqrt{\pi}} \cdot k^{-5/2}.$$
 It easily follows that Theorem \ref {thm:scalingperco} holds in this case with the constants
 $C_{\text{\sfrac{1}{2}}}= (3/2)^{2/3}$, $C_{a}=4(a-1/2)/(4a+1)$ for $1/2<a<1$. In addition, 
 $$C_{a}  \quad  \underset{ a \downarrow 1/2}{\sim} \quad \frac{4}{3} \left(a-\frac{1}{2} \right) \qquad  \qquad \textrm{and} \qquad \qquad C_{a}\underset{a \uparrow 1/2}{ \sim} \quad \displaystyle   \frac{ \sqrt{3}}{4 \sqrt {2}} \cdot  \left( \frac{1}{2}-a \right)^{-1/2}.$$
Theorem \ref{thm:expo} also holds in this case with the same exponent (but with a different constant).

  \subsection{Conjectures about $O(N)$ models on random triangulations}
  \label{sec:conjectures}
 In this work, we {established} that $ \mathscr{L}_{3/2}$ {is the} scaling limit of the boundary of the cluster of the origin for critical site percolation on random triangulations (such as the UIPT or random Boltzmann triangulations). We conjecture that all the {family of} looptrees $ \mathscr{L}_{\alpha}$ for $\alpha \in (1,2)$ {are} scaling limits of boundary clusters of {certain} statistical mechanics models on  random planar maps. 

More precisely, we focus on the so-called $O(N)$ model  on random planar triangulations. We follow closely the presentation of \cite[Section 8]{LGM11} and \cite[Section 3.4]{Mie12}, see also \cite{BBG11,BBG12}. A loop configuration $ \ell$ on a triangulation $ \mathsf{T}$ is a collection of loops drawn on the dual of $ \mathsf{T}$ such that two different loops visit different faces. Equivalently, {a loop configuration is a consistent}  gluing of two types of triangles (an empty triangle, and a triangle with a {dual} path inside {joining}  two different edges) such that the result is a topological sphere, see Fig.~\ref{fig:boucles}.

\begin{figure}[!h]
 \begin{center}
 \includegraphics[width=8cm]{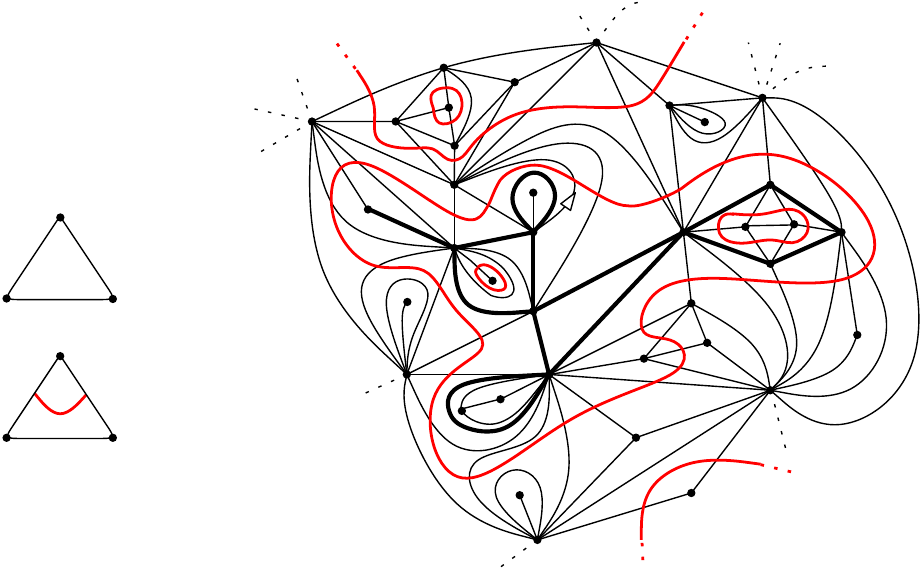}
 \caption{A triangulation with a loop decoration and the cluster of the origin \label{fig:boucles}.}
 \end{center}
 \end{figure}
 
The total perimeter $|\ell|$ of the collection of loops is the number of faces visited by the union of the loops and the number of loops of $\ell$ is denoted by $\# \ell$.  
Provided that a loop traverses the root edge ({which we assume from now on}), we can define the hull of the origin and its boundary as depicted in Fig.~\ref{fig:boucles}. 
We can also define the gasket of $\{ \mathsf{T}, \ell\}$ {as}  the map obtained by removing the interior of the loops (the exterior of a loop contains the target of the root edge) as well as the faces traversed by the loops, see \cite[Fig.~4]{BBG11}.  

 The Boltzmann annealed $O(N)$ measure on decorated triangulations is the probability measure
$$ P_{g,h,N}( \{ \mathsf{T}, \ell\}) = \frac{1}{Z_{g,h,N}} g^{ \# \mathsf{F}( \mathsf{T})} h^{|\ell|} N^{\# \ell} ,$$
where $g,h,N >0$ {are parameters}  and $Z_{g,h,N}$ is a normalizing constant. Now fix $N \in (0,2)$. For most {choices} of {the parameters} $g,h$, large random triangulations under $P_{g,h,N}$ are believed to converge towards the Brownian map unless $g$ and $h$ {satisfy a certain relation} {which we assume to hold }  from now on, see \cite{BBG11,BBG12} for similar models.
We are thus left with one parameter $h >0$. In this case, Le Gall and Miermont \cite{LGM11} provided the conjectural scaling limits of the \emph{gasket} of decorated triangulations: It is conjectured that there exists $h_{c}(N)$ so that if $h < h_{c}(N)$ then the scaling limits of random planar (decorated) triangulations as well as their gaskets converge towards the Brownian map.
If $h > h_{c}(N)$, {it is conjectured that}  the gasket of a large random decorated triangulation under $P_{g,h,N}$ converges (after suitable scaling) towards the stable map of parameter $$a = 3/2 + \pi^{-1} \mathrm{arcsin}(N/2).$$
In this regime, stable maps  have large macroscopic faces that touch themselves and each other.
In the discrete underlying model, this means that the boundaries of large clusters should possess pinch-points at large scale.
We believe that in this case, the scaling limits of these cluster boundaries (or equivalently the inner geometry of a face in a stable map of parameter $a \in (3/2,2)$) is {the stable} looptree $ \mathscr{L}_{\alpha}$ {of index $ \alpha$, where $ \alpha$ satisfies the relation}    \begin{eqnarray} \label{relationaalpha} 1 + \frac{1}{\alpha} &=& a \quad \in (3/2,2).  \end{eqnarray}

{Let us give an heuristic argument supporting this prediction.} In {\cite[Eq.~3.18]{BBG11}} the authors showed that (in certain closely related models) the probability that the origin hull $ \mathcal{H}$ of a random decorated map {under  under $P_{g,h,N}$ (with the parameters chosen as above)} satisfies
$$  \mathbb{P} ( \# \partial \mathcal{H} = k)  \quad\mathop{ \sim}_{k \rightarrow \infty} \quad  C \cdot k^{-2a}.$$
On the other hand, if the tree structure associated to the origin hull is a {critical}  Galton--Watson tree with an offspring distribution in the domain of attraction of an $\alpha$-stable law, from Remark \ref{rek:boltz} we should have 
$$  \mathbb{P} ( \# \partial \mathcal{H} = k) \quad\mathop{ \sim}_{k \rightarrow \infty} \quad C \cdot k^{-2(1+\alpha^{-1})}.$$
Identifying the exponents in the last two displays gives  {our conjecture \eqref{relationaalpha}} .

 \appendix
 \section{Appendix: proof of the technical lemmas}
 
 We conclude this work by establishing several technical results, some of which involve stable densities. We will only use the case $\alpha={3}/{2}$, but prove the general case in view of future applications. By \emph{$ \alpha$-stable Lévy process} we will always mean a 
stable spectrally
positive Lévy process $(X_{t})_{t \geq 0}$ of index $\alpha$, normalized so that for every
$\lambda>0$, $\E[\exp(-\lambda X_t)]=\exp(t \lambda^\alpha)$.
The process $X$ takes values in the Skorokhod space $\D(\R_+, \R)$ of
right-continuous with left limits (càdlàg) real-valued functions,
endowed with the Skorokhod topology (see \cite[Chap. 3]{Bil99}). The dependence of $X$ in $\alpha$ will be implicit in this section, and we denote by $p_{1}$ the density of $X_{1}$.
 
 
 \subsection{Technical lemmas on stable densities}

The following result, which  is a consequence of \cite[Lemma XVII.6.1]{Fel71}, will be useful.

\begin {lemma} \label {lem:densite} We have $p_{1}(0)=1/| \Gamma(-1/ \alpha)|$.
\end{lemma}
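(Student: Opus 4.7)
The plan is to deduce the value of $p_1(0)$ from the Fourier inversion formula applied to the characteristic function of $X_1$, which is obtained by analytic continuation of the Laplace transform $\mathbb{E}[\exp(-\lambda X_1)] = \exp(\lambda^\alpha)$. Since $X_1$ is integrable and its characteristic function is integrable (it decays like $\exp(-|\xi|^\alpha|\cos(\pi\alpha/2)|)$ because $\cos(\pi\alpha/2) < 0$ for $\alpha \in (1,2)$), the density at $0$ can be written as
\[
p_1(0) = \frac{1}{2\pi}\int_{-\infty}^\infty \mathbb{E}[e^{i\xi X_1}]\, d\xi.
\]

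First I would extend the Laplace transform identity to $\mathbb{E}[e^{i\xi X_1}] = \exp\bigl(|\xi|^\alpha e^{-i\pi\alpha\,\mathrm{sgn}(\xi)/2}\bigr)$ via the analyticity of $\lambda \mapsto \mathbb{E}[e^{-\lambda X_1}]$ on the right half-plane. Taking real parts and using the change of variables $u = \xi^\alpha$ for $\xi > 0$, I would get
\[
p_1(0) = \frac{1}{\pi \alpha}\,\mathrm{Re}\!\int_0^\infty u^{1/\alpha-1} \exp\!\bigl(-u\,(-e^{-i\pi\alpha/2})\bigr)\, du.
\]
Since $\mathrm{Re}(-e^{-i\pi\alpha/2}) = -\cos(\pi\alpha/2) > 0$ for $\alpha \in (1,2)$, this is a convergent Gamma integral equal to $\Gamma(1/\alpha)\bigl(-e^{-i\pi\alpha/2}\bigr)^{-1/\alpha}$. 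Extracting the real part yields
\[
p_1(0) = \frac{\Gamma(1/\alpha)}{\pi \alpha}\,\sin(\pi/\alpha).
\]

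Finally I would apply Euler's reflection formula $\Gamma(1/\alpha)\Gamma(1-1/\alpha) = \pi/\sin(\pi/\alpha)$ to rewrite this as $p_1(0) = 1/[\alpha\, \Gamma(1-1/\alpha)]$, and then use the functional equation $\Gamma(1-1/\alpha) = -(1/\alpha)\,\Gamma(-1/\alpha)$ together with the sign observation that $\Gamma(-1/\alpha) < 0$ on $(-1,0)$, giving $\alpha\,\Gamma(1-1/\alpha) = -\Gamma(-1/\alpha) = |\Gamma(-1/\alpha)|$. This produces the announced identity.

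The main technical nuisance is justifying the analytic continuation and the sign/branch bookkeeping when passing from the Laplace transform (defined only for $\lambda \geq 0$) to the characteristic function, and in particular picking the correct branch of $(-i\xi)^\alpha$; once the branch convention is fixed, the rest is bookkeeping with the Gamma function. An alternative, equally short route is to quote Feller's Lemma XVII.6.1 directly: it provides the convergent series representation of the stable density and setting $x=0$ retains only the constant term, which is exactly $1/|\Gamma(-1/\alpha)|$ after the same reflection-formula manipulation.
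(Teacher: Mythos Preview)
Your argument is correct. The Fourier-inversion computation is sound: the characteristic function you write down is the right analytic continuation of the Laplace transform for a spectrally positive $\alpha$-stable law, the decay $\exp(|\xi|^\alpha\cos(\pi\alpha/2))$ with $\cos(\pi\alpha/2)<0$ justifies the inversion at $0$, the Gamma integral with parameter $z=-e^{-i\pi\alpha/2}$ (whose real part is positive for $\alpha\in(1,2)$) gives $\Gamma(1/\alpha)\sin(\pi/\alpha)/(\pi\alpha)$, and the reflection/functional-equation bookkeeping correctly turns this into $1/|\Gamma(-1/\alpha)|$.

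By contrast, the paper gives no proof at all: it simply records the lemma as a consequence of Feller's Lemma~XVII.6.1 and moves on. Your closing remark (reading off the constant term of Feller's series at $x=0$) is precisely that route. So you have supplied strictly more than the paper does: an explicit self-contained derivation plus the citation the paper relies on. The only caveat is the one you already flag, namely fixing the branch of $(-i\xi)^\alpha$ carefully; once that is done the argument is complete.
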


\begin {lemma}\label {lem:technique2}For every $ \beta>0$ and $ \alpha \in (1,2)$,
$$ \int_{0}^{ \infty} dx \ x^{ \beta} \cdot  p_{1}(-x) dx= \frac{ \Gamma( \beta)}{ \Gamma( \beta/ \alpha)}.$$
\end {lemma}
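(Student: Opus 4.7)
The plan is to compute $I(\beta) := \int_0^\infty x^\beta p_1(-x)\,dx = \mathbb{E}[(-X_1)_+^\beta]$ by evaluating a single Laplace transform in two different ways. By the $\alpha$-self-similarity of $X$ (namely $X_t$ has the same law as $t^{1/\alpha}X_1$), one has $\mathbb{E}[(-X_t)_+^\beta] = t^{\beta/\alpha}\, I(\beta)$ for every $t>0$, so
\[
L(q) \;:=\; \int_0^\infty e^{-qt}\,\mathbb{E}[(-X_t)_+^\beta]\,dt \;=\; I(\beta)\,\Gamma\!\left(\tfrac{\beta}{\alpha}+1\right)q^{-\beta/\alpha-1}.
\]
On the other hand, by Fubini and the change of variable $y = -x$, $L(q) = \int_0^\infty x^\beta\, r^q(-x)\,dx$, where $r^q(y) := \int_0^\infty e^{-qt}p_t(y)\,dt$ is the $q$-resolvent density of $X$ at $y$.

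The key step is the identification of $r^q(-x)$ for $x>0$. Since $X$ is spectrally positive, it has no negative jumps, so the first passage time $T_{-x} := \inf\{t \geq 0 : X_t \leq -x\}$ is almost surely finite and satisfies $X_{T_{-x}} = -x$. Moreover, $(T_{-x})_{x \geq 0}$ is the $1/\alpha$-stable subordinator, with Laplace transform $\mathbb{E}[e^{-qT_{-x}}] = e^{-xq^{1/\alpha}}$ (this follows from the scaling and from $\psi(\lambda) = \lambda^\alpha$ being the right Wiener--Hopf factor). Applying the strong Markov property at $T_{-x}$, translation invariance, and the fact that $X_t > -x$ on $[0,T_{-x})$, one obtains
\[
r^q(-x) \;=\; \mathbb{E}[e^{-qT_{-x}}]\,r^q(0) \;=\; e^{-xq^{1/\alpha}}\,r^q(0).
\]

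To finish, I would compute $r^q(0)$ by self-similarity: $p_t(0) = t^{-1/\alpha}p_1(0)$ gives $r^q(0) = p_1(0)\,\Gamma(1-1/\alpha)\,q^{1/\alpha-1}$, and by Lemma~\ref{lem:densite} together with the functional equation $\Gamma(1-1/\alpha) = -\tfrac{1}{\alpha}\,\Gamma(-1/\alpha)$, this simplifies to $r^q(0) = q^{1/\alpha-1}/\alpha$. Plugging this in and using $\int_0^\infty x^\beta e^{-xq^{1/\alpha}}\,dx = \Gamma(\beta+1)\,q^{-(\beta+1)/\alpha}$ yields $L(q) = \tfrac{1}{\alpha}\,\Gamma(\beta+1)\,q^{-\beta/\alpha-1}$. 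Matching with the first expression for $L(q)$ and simplifying via $\Gamma(z+1) = z\,\Gamma(z)$ on both sides gives $I(\beta) = \Gamma(\beta)/\Gamma(\beta/\alpha)$, as required.

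The only step carrying genuine probabilistic content (as opposed to Gamma-function bookkeeping) is the identification of $r^q(-x)$. All three ingredients are essential there — spectral positivity, continuous crossing of negative levels, and the explicit Laplace transform of $T_{-x}$ — and that is where I expect any difficulty to lie; the rest is a short verification.
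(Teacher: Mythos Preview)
Your argument is correct and takes a genuinely different route from the paper's. The paper proceeds purely analytically: it expands $p_{1}(-x)$ via the Feller series $p_{1}(-x)=-\frac{1}{\pi x}\sum_{k\geq 1}\frac{\Gamma(1+k/\alpha)}{k!}(-x)^{k}\sin(k\pi/\alpha)$, integrates term by term on $[0,A]$, and identifies the limit $A\to\infty$ using an asymptotic for a hypergeometric-type series from Paris--Kaminski together with the reflection formula. Your approach instead exploits the L\'evy structure: self-similarity turns the moment into a power of $t$, and the spectrally positive property gives the exact resolvent $r^{q}(-x)=e^{-xq^{1/\alpha}}r^{q}(0)$ via the strong Markov property at the continuous first passage below $-x$. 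The trade-off is that the paper's proof stays within classical special functions but must import a nontrivial asymptotic estimate, whereas yours is more conceptual and avoids any series manipulation, at the cost of invoking standard fluctuation theory for spectrally one-sided L\'evy processes. One small remark: to justify Fubini and the finiteness of $I(\beta)$ a priori, you should note that the Laplace transform $\mathbb{E}[e^{-\lambda X_{1}}]=e^{\lambda^{\alpha}}$ forces the left tail of $X_{1}$ to decay like $\exp(-c\,x^{\alpha/(\alpha-1)})$, so all positive moments of $(-X_{1})_{+}$ are finite; this is implicit in your argument but worth stating.
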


\proof 
By \cite[Lemma XVII.6.1]{Fel71},  we have the following series representation, valid for $x>0$:
$$p_{1}(-x)=  -\frac{1}{ \pi x} \sum_{k=1}^{ \infty} \frac{ \Gamma(1+k/\alpha)}{k!} (-x)^{k} \sin( k \pi/\alpha).$$
To simplify notation, set $F(A)= \int_{0}^{ A} dx \ x^{ \beta} \cdot  p_{1}(-x) dx$ for $A>0$, so that:
\begin{eqnarray*}
F(A)&=& - \frac{1}{ \pi} \sum_{k=1}^{ \infty} \frac{ \Gamma(1+k/\alpha)}{k!} \frac{1}{k+\beta} A^{k+\beta} (-1)^{k} \sin(k \pi/\alpha) \\
&=& -\frac{A^ \beta}{ \pi} \cdot \mathsf{Im} \left(  \sum_{k=1}^{ \infty}  \frac{ \Gamma( k/ \alpha+1) \Gamma(k+ \beta)}{ \Gamma(k+ \beta+1)}  \cdot \frac{ \left(-A e^{i \pi/ \alpha} \right)^k}{k!} \right).
\end{eqnarray*}
By \cite[(2.3.10)]{PK11}, we have
$$\mathsf{Im} \left(  \sum_{k=1}^{ \infty}  \frac{ \Gamma( k/ \alpha+1) \Gamma(k+ \beta)}{ \Gamma(k+ \beta+1)}  \cdot \frac{ \left(-A e^{i \pi/ \alpha} \right)^k}{k!} \right)  \quad\mathop{ \sim}_{A \rightarrow \infty} \quad - \Gamma( \beta) \Gamma(1- \beta/ \alpha) \sin( \pi \beta / \alpha) \cdot A^{- \beta}.$$
Hence, using the reflection formula for the $ \Gamma$ function, we get
$$ \lim_{A \rightarrow \infty} F(A)= \Gamma( \beta) \Gamma(1- \beta/ \alpha) \frac{\sin( \pi \beta / \alpha) }{ \pi} = \frac{\Gamma( \beta) }{ \Gamma( \beta/ \alpha)}. \qedhere $$

In particular, note that for $ \alpha=3/2$ and $ \beta=1/2$ we have\begin{equation}
p_{1}(0)=\frac{2}{3 \Gamma(1/3)} \quad \mbox{ and }\quad  \label{eq:int}\int_{0}^{ \infty} dx \sqrt {x} \cdot  p_{1}(-x) dx= \frac{ \sqrt { \pi}}{ \Gamma(1/3)}.
\end{equation}

\subsection {A technical estimate for Galton--Watson trees}

We next establish  the following asymptotic estimates.

 \begin {proposition} \label {prop:general} Fix $ \alpha \in (1,2)$ and $ \beta> \alpha$. Let $ \rho$ be a critical offspring distribution such that $ \rho_{k} \sim C \cdot k^{-1-\alpha}$ as $ k \rightarrow \infty$ for a certain $C>0$. Let $\phi : \mathbb{Z}_{+} \to \mathbb{R}_{+}$ a function such that $ \phi(x) \sim \kappa \cdot x^{\beta}$ as $x \rightarrow \infty$ for a certain $ \kappa>0$.
 
 \begin{enumerate}
  \item[(i)] We have $$ {\mathsf{GW}}_{ \rho} \left(| \tau|=n \right) \quad\mathop{ \sim}_{n \rightarrow \infty} \quad  \frac{1}{| \Gamma(-1/ \alpha)|  \cdot ( \Gamma(- \alpha) C)^{1/ \alpha}} \cdot \frac{1}{n^{1+1/ \alpha}}.$$
  \item[(ii)] We have
 $$  \mathsf{GW}_{ \rho}\left[ \left. \sum_{u \in \tau} \phi(k_{u})  \right | |\tau|=n  \right]  \quad\mathop{ \sim}_{n \rightarrow \infty} \quad \kappa \cdot C^{( \beta-1)/ \alpha} \cdot \Gamma(- \alpha)^{( \beta- \alpha -1)/ \alpha}\cdot  \frac{ \Gamma( \beta- \alpha -1)}{ \Gamma(( \beta- \alpha -1)/ \alpha)} \cdot  n^{\beta/\alpha}.$$
 \item[(iii)] We have
 \begin{eqnarray*}
&& \mathsf{GW}_{ \rho}\left[  \sum_{u \in \tau} \phi(k_{u})  \mathbbm{1}_{ |\tau|=n}  \right]  \\
&& \quad\mathop{ \sim}_{n \rightarrow \infty} \quad \kappa \cdot C^{( \beta-2)/ \alpha} \cdot \Gamma(- \alpha)^{( \beta- \alpha -2)/ \alpha}\cdot  \frac{ \Gamma( \beta- \alpha -1)}{ | \Gamma(-1/\alpha)| \Gamma(( \beta- \alpha -1)/ \alpha)} \cdot  n^{(\beta- \alpha-1)/\alpha}.
 \end{eqnarray*}
   \end{enumerate}
\end {proposition}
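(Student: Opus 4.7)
The three statements all reduce to random-walk asymptotics via the cycle lemma. Let $(X_i)_{i\geq 1}$ be i.i.d.\ of law $\rho$ and set $S_n = \sum_{i=1}^n(X_i-1)$. One has the Dwass-type identities
\[
\mathsf{GW}_\rho(|\tau|=n) = \tfrac{1}{n}\mathbb{P}(S_n=-1), \qquad
\mathsf{GW}_\rho\Big[\sum_{u}f(k_u)\mathbf{1}_{|\tau|=n}\Big] = \mathbb{E}\big[f(X_1)\mathbf{1}_{S_n=-1}\big],
\]
the latter following from the former by exchangeability once the cyclic-invariant functional $\sum_i f(X_i)$ is considered. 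From $\rho_k \sim Ck^{-1-\alpha}$ a Tauberian expansion gives $\mathbb{E}[e^{-\lambda(X-1)}] = 1 + C\Gamma(-\alpha)\lambda^{\alpha} + o(\lambda^{\alpha})$, so $S_n/B_n \to X_1$ in law with $B_n = (nC\Gamma(-\alpha))^{1/\alpha}$ and $X_1$ the normalised $\alpha$-stable variable of the appendix. Since (iii) divided by (i) is (ii), it suffices to establish (i) and (iii).

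For (i), the arithmetic local limit theorem in the $\alpha$-stable domain of attraction gives $\mathbb{P}(S_n=-1)\sim p_1(0)/B_n$; plugging in $p_1(0)=1/|\Gamma(-1/\alpha)|$ from Lemma~\ref{lem:densite} and dividing by $n$ produces the claim. For (iii), I would decompose
\[
\mathbb{E}\big[\phi(X_1)\mathbf{1}_{S_n=-1}\big] = \sum_{k\geq 0}\phi(k)\rho_k\,\mathbb{P}(S_{n-1}=-k).
\]
Since $\phi(k)\rho_k \sim \kappa C k^{\beta-\alpha-1}$ with exponent strictly greater than $-1$ (thanks to $\beta>\alpha$), and since $p_1(-y)$ decays faster than any polynomial as $y\to+\infty$ by spectral positivity of $X$, the sum concentrates on $k$ of order $B_n$. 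Applying the LLT uniformly on that window, performing the change of variable $y=k/B_n$ and recognising a Riemann sum of mesh $1/B_n$ leads to
\[
\mathbb{E}[\phi(X_1)\mathbf{1}_{S_n=-1}] \sim \kappa C\,B_n^{\beta-\alpha-1}\int_0^\infty y^{\beta-\alpha-1}p_1(-y)\,dy,
\]
and Lemma~\ref{lem:technique2} evaluates the integral as $\Gamma(\beta-\alpha-1)/\Gamma((\beta-\alpha-1)/\alpha)$. Substituting the value of $B_n$ and collecting powers of $C$ and $\Gamma(-\alpha)$ yields the announced formula; part (ii) is then immediate by division.

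The main obstacle is justifying the passage from the discrete sum to the integral uniformly in $k$. One needs (a) uniform control of the LLT on the bulk $k\leq MB_n$ for large $M$, (b) a separate tail estimate for $k\gg B_n$, where the LLT ceases to be sharp but the relevant probability is crushed to zero by the super-polynomial decay of $p_1$ on the negative axis, and (c) a check that the contribution from bounded $k$---where $\rho_k$ may differ substantially from its asymptotic $Ck^{-1-\alpha}$---is of strictly lower order, which is exactly the role of $\beta>\alpha$. A mild technical subtlety is that Lemma~\ref{lem:technique2} is stated for positive exponent while here the exponent $\beta-\alpha-1$ can lie in $(-1,0)$ when $\alpha<\beta<\alpha+1$; the extension is obtained by the same series-plus-integration-by-parts argument that proves the lemma, the identity $\Gamma(\beta')/\Gamma(\beta'/\alpha)$ remaining valid by analytic continuation for all $\beta'>-1$.
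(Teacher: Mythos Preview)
Your approach is essentially the paper's: the same cycle-lemma identities, the same local limit theorem with $B_n=(C\Gamma(-\alpha))^{1/\alpha}n^{1/\alpha}$, the same Riemann-sum passage to $\int_0^\infty y^{\beta-\alpha-1}p_1(-y)\,dy$ evaluated via Lemma~\ref{lem:technique2}; the only cosmetic difference is that the paper proves (ii) and deduces (iii), while you do the reverse. The tail control you flag as obstacle~(b) is exactly what the paper isolates as a separate lemma, proving by a contour-shift argument that $\mathbb{P}(S_N=N-k)\le c_1 N^{-1/\alpha}e^{-c_2 k^{\alpha}/N}$ uniformly in $k$, which then feeds dominated convergence and disposes of (a)--(c) in one stroke; your observation about the range of validity of Lemma~\ref{lem:technique2} when $\alpha<\beta<\alpha+1$ is also well taken and is indeed glossed over in the paper (though irrelevant for the application there, where $\beta-\alpha-1=1/2$).
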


Before proving Proposition \ref {prop:general}, we state two useful results.

\begin{theorem}[Local limit theorem]\label{thm:locallimit}
Let $(W_n)_{n \geq 0}$ be a random walk on $\Z$ started from $0$. Assume that $ \P[W_{1}<0]=0$ and that there exists $ \alpha \in (1,2)$ and $c>0$ such that $ \P[W_{1}=k] \sim C \cdot k^{ -1-\alpha}$ as $k \rightarrow \infty$. Set $a_{n}= \left(\Gamma(- \alpha) C \right)^{1/ \alpha}n^{1/ \alpha}$. Then:
\begin{equation}
\label{eq:ll}\lim_{n \rightarrow \infty} \sup_{k \in \Z}
\left| a_n \P[W_n=k]-p_1\left( \frac{k-n \E[W_1]}{a_n}\right)
\right|=0.
\end{equation}
\end{theorem}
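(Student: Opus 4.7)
The plan is to establish the local limit theorem via Fourier inversion, since the tail assumption $\P[W_1 = k] \sim C k^{-1-\alpha}$ places the law of $W_1$ in the domain of attraction of the spectrally positive $\alpha$-stable law $X_1$. Writing $\mu = \E[W_1]$ and $\varphi(t) = \E[e^{itW_1}]$, the fact that $W_n$ is $\Z$-valued gives
$$a_n \, \P[W_n = k] = \frac{1}{2\pi} \int_{-\pi a_n}^{\pi a_n} e^{-iku/a_n} \, \varphi(u/a_n)^n \, du,$$
whereas the Fourier inversion of $p_1$ on $\R$ yields
$$p_1\!\left(\frac{k - n\mu}{a_n}\right) = \frac{1}{2\pi} \int_{-\infty}^{\infty} e^{-i(k-n\mu)u/a_n} \, \hat{p}_1(u) \, du.$$
Inserting $e^{\pm i n\mu u/a_n}$ in the first integral and bounding the difference in absolute value (which kills the oscillating phase in $k$) reduces the problem to showing that
$$I_n := \int_{-\pi a_n}^{\pi a_n} \!\Big| e^{i n \mu u/a_n} \varphi(u/a_n)^n - \hat{p}_1(u) \Big| \, du + \int_{|u| > \pi a_n} \!|\hat{p}_1(u)| \, du$$
tends to $0$ as $n \to \infty$, a bound that is now independent of $k$.

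Next I would split the first integral of $I_n$ into three zones $\{|u|\le R\}$, $\{R < |u| \le \delta a_n\}$ and $\{\delta a_n < |u| \le \pi a_n\}$ for a small fixed $\delta$ and a large fixed $R$. The tail $\{|u| > \pi a_n\}$ contributes $o(1)$ because $\hat{p}_1$ is integrable on $\R$ (its modulus is $\exp(-|\cos(\alpha\pi/2)|\,|u|^\alpha)$). On the window $\{|u| \le R\}$, the classical CLT input---the pointwise convergence of $e^{i n \mu u/a_n} \varphi(u/a_n)^n$ to $\hat{p}_1(u)$, itself a consequence of the regular-variation expansion of $\log \varphi(t)$ near $0$ combined with the choice $a_n^\alpha = \Gamma(-\alpha) C \, n$---together with dominated convergence yields a vanishing contribution.

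The main work lies in the two remaining zones and requires complementary bounds on $|\varphi|$. Near the origin, the regular-variation expansion gives $|\varphi(t)| \le \exp(-c|t|^\alpha)$ for $|t|\le \delta$, so $|\varphi(u/a_n)|^n \le \exp(-c'|u|^\alpha)$ uniformly in $n$ on $\{|u| \le \delta a_n\}$; this is an integrable majorant, and the contribution of $\{R < |u| \le \delta a_n\}$ is bounded by $\int_{|u|>R} \exp(-c'|u|^\alpha) du + \int_{|u|>R} |\hat{p}_1(u)| du$, which is as small as one wishes for $R$ large. Away from the origin, one needs a bound $|\varphi(t)| \le 1 - \eta$ for some $\eta>0$ uniform on $\{\delta \le |t| \le \pi\}$. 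This is where aperiodicity enters: since $\rho_k>0$ for all large $k$, the walk is aperiodic, so $|\varphi(t)|<1$ on $[-\pi,\pi]\setminus\{0\}$ and the uniform bound follows from compactness. Hence $|\varphi(u/a_n)|^n \le (1-\eta)^n$ on $\{\delta a_n < |u| \le \pi a_n\}$, whose Lebesgue measure is $O(a_n) = O(n^{1/\alpha})$, and this zone contributes $O(n^{1/\alpha}(1-\eta)^n) = o(1)$.

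The principal obstacle is precisely the two-regime control of $|\varphi|$: the near-origin decay is a routine consequence of the tail assumption, but the bound away from $0$ is the step where one must invoke aperiodicity and verify that $W_1$ does not live on a strict sublattice of $\Z$ (otherwise the stated density $p_1$ must be corrected by the lattice span). Once these ingredients are assembled, summing the three vanishing contributions with the tail bound shows $I_n \to 0$ and completes the proof; what results is the classical Gnedenko--Kolmogorov local limit theorem in the stable domain of attraction, in the lattice setting appropriate to our application.
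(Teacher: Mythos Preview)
The paper does not actually prove this statement: it simply cites \cite[Theorem 4.2.1]{IL71} and moves on. Your sketch reproduces the classical Fourier-inversion argument that underlies that reference (and the Gnedenko--Kolmogorov treatment), so in spirit you are doing exactly what the cited proof does, only spelling it out. The three-zone decomposition, the use of the regular-variation expansion of $\log\varphi$ near $0$ to get the integrable majorant $e^{-c'|u|^\alpha}$, and the aperiodicity-plus-compactness bound on $\{\delta\le |t|\le\pi\}$ are all the standard ingredients, and you have correctly observed that the tail hypothesis $\P[W_1=k]\sim C k^{-1-\alpha}$ forces $\P[W_1=k]>0$ for all large $k$, hence span $1$, so no extra aperiodicity assumption is needed.

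Two cosmetic points that do not affect the structure: the centering phase should be $e^{-in\mu u/a_n}$ rather than $e^{+in\mu u/a_n}$ (you want the characteristic function of $(W_n-n\mu)/a_n$), and the exact form of $|\hat p_1(u)|$ depends on the normalization of the spectrally positive stable law; what you actually need, and what holds, is $|\hat p_1(u)|\le e^{-c_0|u|^\alpha}$ for some $c_0>0$, which suffices for integrability and for the tail bound $\int_{|u|>\pi a_n}|\hat p_1(u)|\,du\to 0$.
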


See e.g. \cite[Theorem 4.2.1]{IL71} for a proof of the local limit theorem.

\begin {lemma} \label {lem:GW}Let $(S_{n})_{n \geq 0}$ be a random walk on $ \Z$ whose jump distribution is $  \rho$. Then:
\begin{enumerate}
 \item[(i)] for $n \geq 1$, $ \displaystyle \GWrho{| \tau|=n} = \frac{1}{n} \Pr{S_{n}=n-1} $
 \item[(ii)] for every function $F:  \Z \rightarrow \R_{+}$,
$$ \Esrho { \left. \sum_{u \in \tau} F(k_{u})  \right| {| \tau|=n} }= n \cdot   \Es {F(S_{1}) | S_{n}=n-1}.$$
  \end{enumerate}
\end {lemma}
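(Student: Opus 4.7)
The plan is to use the Łukasiewicz path encoding of plane trees together with the cycle lemma (Kemperman's formula). For a plane tree $\tau$, list its vertices $u_0, u_1, \ldots, u_{|\tau|-1}$ in lexicographical (depth-first) order and define the discrete path $W_0 = 0$ and $W_{i+1} = W_i + k_{u_i} - 1$. It is classical that $\tau \mapsto (W_i)_{0 \leq i \leq |\tau|}$ is a bijection between finite plane trees and discrete paths starting at $0$, taking increments in $\{-1, 0, 1, 2, \ldots\}$, staying non-negative before time $|\tau|$, and first hitting $-1$ at time $|\tau|$. Moreover, if $\tau$ is $\mathsf{GW}_\rho$-distributed, then under this encoding the increments $k_{u_0}, k_{u_1}, \ldots$ are i.i.d.\ with law $\rho$ up to the stopping time $|\tau|$. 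Thus, writing $(S_i)_{i \geq 0}$ for a random walk with step distribution $\rho$, we have $W_i \stackrel{(d)}{=} S_i - i$ (before the hitting time).

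For \emph{(i)}, the Łukasiewicz encoding gives
\[
\mathsf{GW}_\rho(|\tau|=n) \;=\; \P\bigl[S_i \geq i \text{ for all } i<n,\; S_n = n-1\bigr].
\]
This is where the cycle lemma enters: among the $n$ cyclic shifts of the increments $(S_1 - S_0, \ldots, S_n - S_{n-1})$ of a path from $0$ to $n-1$, exactly one shift produces a path whose partial sums satisfy $S_i \geq i$ for $i<n$ (this is a standard Dvoretzky--Motzkin type argument). Since cyclic shifts preserve the distribution of the increments, the probability of the constrained event is exactly $1/n$ times $\P[S_n = n-1]$, yielding (i).

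For \emph{(ii)}, the key observation is that $\sum_{u \in \tau} F(k_u) = \sum_{i=1}^{n} F(S_i - S_{i-1})$ under the Łukasiewicz encoding, and crucially this sum of the increments is \emph{invariant under cyclic permutations}. Combining this with the cycle lemma, the conditional distribution given $|\tau| = n$ of the multiset of increments is the same as the conditional distribution of the multiset of increments of $(S_i)_{0 \leq i \leq n}$ given $S_n = n-1$. More precisely,
\[
\mathsf{GW}_\rho\!\Bigl[\,\sum_{u \in \tau} F(k_u) \,\Big|\, |\tau|=n\Bigr] \;=\; \E\!\Bigl[\,\sum_{i=1}^{n} F(S_i - S_{i-1}) \,\Big|\, S_n = n-1\Bigr],
\]
since the value of the sum is a cyclic invariant and the extraction of the ``good'' shift (of probability $1/n$) on the left is exactly compensated by the factor $1/n$ produced in (i) when expanding the conditional expectation. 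Finally, under the conditional law given $S_n = n-1$ the increments $(S_i - S_{i-1})_{1 \leq i \leq n}$ are exchangeable, so each summand has the same conditional expectation $\E[F(S_1) \mid S_n = n-1]$, and the right-hand side simplifies to $n\cdot \E[F(S_1) \mid S_n = n-1]$, proving (ii).

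The routine part is the Łukasiewicz bookkeeping; the one point requiring care is the cycle lemma step, where one must verify that the cyclic-shift argument applies to paths with increments in $\{-1, 0, 1, 2, \ldots\}$ ending at $-1$ (rather than the usual $\pm 1$ version), and that the invariance of $\sum_i F(S_i - S_{i-1})$ under cyclic shifts legitimately removes the first-passage constraint. Both are classical and can be found for instance in Pitman's \emph{Combinatorial Stochastic Processes}.
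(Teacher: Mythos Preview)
Your proof is correct and follows exactly the approach the paper has in mind: the paper's own proof is just a two-line sketch that cites Pitman's \emph{Combinatorial Stochastic Processes} for (i) and states that (ii) ``easily follows from the fact that $\sum_{u\in\tau}F(k_u)$ is invariant under cyclic shifts.'' You have simply unpacked both steps (the \L ukasiewicz encoding, the cycle lemma, and the exchangeability argument) in full, which is fine.
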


\begin {proof}
For (i), see e.g. \cite[Section 5.2]{Pit06}. Assertion (ii) easily follows from the fact that $\sum_{u \in \tau} F(k_{u})$ is invariant under cyclic shifts.
\end {proof}

We are now ready to prove Proposition  \ref{prop:general}.
\begin {proof}[Proof of Proposition \ref{prop:general}]
Let $(S_{n})_{n \geq 0}$ be a random walk on $ \Z$ whose jump distribution is $  \rho$. 
Since $ \rho$ is critical and $ \rho_{i} \sim   C i^{-1- \alpha}$  as $i \rightarrow \infty$, \eqref{eq:ll} applies with $a_{n}= \left(\Gamma(- \alpha) C \right)^{1/ \alpha}n^{1/ \alpha}$. Then by Lemma \ref {lem:GW} (i) and the local limit theorem:
\begin{equation*}
\GWrho{| \tau|=n} \quad\mathop{ \sim}_{n \rightarrow \infty} \quad  \frac{ p_{1}(0)}{ ( \Gamma(- \alpha) C)^{1/ \alpha}}  \cdot \frac{1}{n^{1+1/ \alpha}}  = \frac{1}{| \Gamma(-1/ \alpha)|  \cdot ( \Gamma(- \alpha) C)^{1/ \alpha}} \cdot \frac{1}{n^{1+1/ \alpha}}.
\end{equation*}
This proves the first assertion.

For (ii), write
\begin{eqnarray}
 &&  \frac{1}{n^{ \beta/ \alpha}}\Esrho { \left. \sum_{u \in \tau} \phi(k_{u})  \right| {| \tau|=n} }\notag \\
  &&  \qquad \qquad = n \sum_{k=0}^{ \infty}  \frac{\phi(k)}{n^ { \beta/ \alpha}}  \rho_{k} \frac{ \Pr {S_{n-1}=n-1-k}}{ \Pr {S_{n}=n-1}}  \qquad \textrm{by Lemma \ref {lem:GW} (ii)}\notag\\
  && \qquad \qquad= \int_{0}^ \infty dx \frac{\phi(\fl {x n^{1/ \alpha}})}{n^ {\beta/ \alpha}}   n^{1+1/ \alpha}\rho_{ \fl {x n^{1/ \alpha}}} \frac{ \Pr {S_{n-1}=n-1-\fl {x n^{1/ \alpha}}}}{ \Pr {S_{n}=n-1}}. \label {eq:cvdom}
\end{eqnarray}
In order to use the dominated convergence theorem we use the following technical lemma whose proof is postponed to the end of this section:
\begin {lemma} \label {lem:bornesousexp} Let $ \xi$ be a critical probability measure on $  \{0,1,2,  \ldots\}$ of span $1$ (i.e. the greatest integer dividing all the integers $n$ such that $ \xi(n)>0$ is $1$). Let $F(z)= \Es {z^{ \xi}}$ be the probability generating function of $ \xi$ and assume there exists $ c \in \R$ such that the following Taylor expansion holds around $z=1$:
\begin{equation}
\label{eq:hyp}F(z)=1+(z-1)+ c (z-1)^{ \alpha} +o(|z-1|^{ \alpha}) ,\qquad |z| \leq 1.
\end{equation}
Let $S_{N}= \sum_{i=1}^{N} \xi_{i}$, where $ \xi_{i}$ are i.i.d. copies of $ \xi$. There exists constants $c_{1},c_{2}$ such that for every $k, N \geq 1$:
$$ \P(S_{N}=N-k) \leq \frac{c_{1}}{N^{1/ \alpha}} e^{-c_{2} k^{ \alpha}/N}.$$
\end {lemma}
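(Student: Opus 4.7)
The approach is a Chernoff bound combined with the local limit theorem to recover the polynomial prefactor $N^{-1/\alpha}$. Hypothesis \eqref{eq:hyp}, specialized to $z=e^{-\lambda}$ with $\lambda>0$ small, provides constants $A,\lambda_0>0$ such that
$$\log F(e^{-\lambda}) \,\leq\, -\lambda + A\lambda^{\alpha}, \qquad \lambda \in [0,\lambda_0]$$
(the $\lambda^\alpha$ coefficient is effectively positive because $F$ is convex on $[0,1]$ and $F'(1)=1$). Markov's inequality applied to $e^{-\lambda S_N}$ then yields, for any admissible $\lambda$,
$$\P(S_N \leq N-k) \,\leq\, e^{\lambda(N-k)} F(e^{-\lambda})^N \,\leq\, \exp\bigl(-\lambda k + N A \lambda^{\alpha}\bigr).$$

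Optimizing the right-hand side over $\lambda>0$ gives the saddle point $\lambda^{*}=(k/(NA\alpha))^{1/(\alpha-1)}$, admissible as soon as $k \leq c_{*}N$ for a suitable $c_*>0$. Plugging $\lambda^*$ back in produces the ``true'' Chernoff rate
$$\P(S_N \leq N-k) \,\leq\, \exp\bigl(-c_3 \, k^{\alpha/(\alpha-1)} N^{-1/(\alpha-1)}\bigr).$$
Setting $y = k/N^{1/\alpha}$ and using $\alpha/(\alpha-1)>\alpha$ (which is where $\alpha<2$ enters), one has $y^{\alpha/(\alpha-1)} \geq y^{\alpha}$ whenever $y \geq 1$, so the above exponent dominates the target $c_2 k^\alpha/N$ as soon as $k \geq N^{1/\alpha}$. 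For $k \geq c_*N$ we rather fix $\lambda = \lambda_0$, obtaining the strictly stronger linear bound $\P \leq e^{-c' k}$.

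It remains to produce the prefactor $N^{-1/\alpha}$. For $k \leq N^{1/\alpha}$ the exponential $e^{-c_2 k^\alpha/N}$ is bounded below by a constant, so it suffices to invoke the uniform bound $\P(S_N = m) \leq C/a_N = C'/N^{1/\alpha}$, which follows from Theorem \ref{thm:locallimit} and the boundedness of $p_1$. In the intermediate range one must produce the prefactor \emph{and} the exponential decay at the same time. For this we appeal to a sharp (Stone-type) local limit theorem,
$$\P(S_N = N-k) \,\leq\, \frac{C}{a_N}\bigl(p_1(-k/a_N)+o(1)\bigr),$$
together with the stretched-exponential left-tail asymptotic $p_1(-x) \leq C e^{-c x^{\alpha/(\alpha-1)}}$ ($x \geq 1$) for the spectrally positive stable density, which combined with the exponent comparison above yields the full claim.

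The main obstacle is precisely this intermediate regime: Chernoff alone gives the right exponential but no polynomial prefactor, while the uniform LLT gives the prefactor but no exponential decay. An equivalent, slightly more self-contained way to deliver both simultaneously is to exponentially tilt the step distribution at $\lambda^*$ and then apply the ordinary Gaussian local limit theorem to the tilted walk; its variance $\widetilde\sigma(\lambda^{*})^2 \asymp (\lambda^{*})^{\alpha-2}$ can be read directly off the expansion of $F$, so the tilting delivers the exponential Chernoff factor and the Gaussian LLT supplies a polynomial prefactor of the correct order in one stroke.
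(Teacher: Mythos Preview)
Your identification of the ``main obstacle'' is exactly right, but neither of your two proposed fixes actually closes the gap.

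The appeal to the local limit theorem in the intermediate range $N^{1/\alpha}\le k\le c_{*}N$ does not work: the statement
\[
\P(S_N=N-k)\;\le\;\frac{C}{a_N}\bigl(p_1(-k/a_N)+o(1)\bigr)
\]
carries an error term $o(1)$ that is uniform in $k$ but does \emph{not} decay as $k/a_N\to\infty$. Hence the stretched-exponential bound on $p_1(-x)$ is swamped by the error, and all you recover is $\P(S_N=N-k)=O(N^{-1/\alpha})$ uniformly, with no exponential factor. Obtaining a local limit theorem with a multiplicatively decaying error on the left tail is precisely the content of the lemma, so invoking such a result here is circular.

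The tilting alternative is the right idea but is not self-contained as stated. The parameter $\lambda^{*}$ depends on $k$ and hence on $N$, and the variance of the tilted law blows up as $\lambda^{*}\downarrow 0$. ``Applying the ordinary Gaussian LLT to the tilted walk'' then requires a local limit theorem that is \emph{uniform} over a one-parameter family of distributions with diverging variance; establishing that uniformity is essentially the whole lemma.

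The paper obtains both the prefactor and the exponential decay in a single stroke, by a saddle-point/contour argument that is morally your tilting carried out at the level of the Fourier inversion. One writes
\[
\P(S_N=N-k)=\frac{1}{2\pi}\int_{-\pi}^{\pi}e^{-\delta k^{\alpha}/N+ikt}\,G\bigl(e^{-\delta k^{\alpha-1}/N+it}\bigr)^{N}\,dt,\qquad G(z)=F(z)/z,
\]
i.e.\ shifts the integration contour to the circle of radius $e^{-\delta k^{\alpha-1}/N}$. The expansion \eqref{eq:hyp} controls $\log|G|$ near $t=0$, and the span-$1$ hypothesis bounds $|F|$ strictly below $1$ away from $t=0$; combining these yields $|G(e^{-\delta k^{\alpha-1}/N+it})|^{N}\le e^{2c\delta^{\alpha}k^{\alpha(\alpha-1)}/N^{\alpha-1}-c_1Nt^{\alpha}}$ on the whole circle. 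The $t$-integral $\int_{-\pi}^{\pi}e^{-c_1Nt^{\alpha}}\,dt\asymp N^{-1/\alpha}$ then supplies the polynomial prefactor, while the contour radius supplies the factor $e^{-\delta k^{\alpha}/N}$ (the competing term $e^{2c\delta^{\alpha}k^{\alpha(\alpha-1)}/N^{\alpha-1}}$ is harmless for small $\delta$ since $k\le N$). Your Chernoff bound is exactly the $t=0$ slice of this computation, which explains why it captures the exponent but misses the $N^{-1/\alpha}$.
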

\noindent We return to the proof of Proposition \ref{prop:general}. We may apply Lemma \ref {lem:bornesousexp}, which combined with the local limit theorem gives that for every $ n \geq 1$ and $x \geq 0$
$$ \frac{ \Pr {S_{n-1}=n-1-\fl {x n^{1/ \alpha}}}}{ \Pr {S_{n}=n-1}} \leq c_{3}  e^{-c_{4} x^{ \alpha}}.$$
In addition, since $ \phi(x) \sim  \kappa \cdot x^{\beta}$ and $ \rho_{k} \sim C \cdot k^{-1- \alpha}$, we have
$$\frac{\phi(\fl {x n^{1/ \alpha}})}{n^{ \beta/ \alpha}}   n^{1+1/ \alpha}\rho_{ \fl {x n^{1/ \alpha}}} \leq c_{5} x^{ \beta- \alpha-1}.$$
The expression appearing under the integral in \eqref{eq:cvdom} is thus bounded by $c_{6} x^{ \beta- \alpha-1} e^ {-c_{4} x^{  \alpha}}$. By using the dominated convergence theorem as $n \rightarrow \infty$ in \eqref{eq:cvdom} combined with the local limit theorem,  we conclude that:
\begin{eqnarray*}
\frac{1}{n^{ \beta/ \alpha}}\Esrho { \left. \sum_{u \in \tau} \phi(k_{u})  \right| {| \tau|=n} }  & \displaystyle \mathop{\longrightarrow}_{n \rightarrow \infty} & \kappa C \int_{0} ^{ \infty} dx  \  {x}^{ \beta- \alpha -1} p_{1} \left( -  \frac{x}{(\Gamma(-\alpha)C)^{1/\alpha}} \right) \\
&=&  \kappa C  \left( \Gamma(- \alpha)C \right)^{( \beta- \alpha-1)/ \alpha} \cdot \int_{0}^ \infty dx\  {x}^{ \beta- \alpha -1} p_{1}(-x). \\
&=& \kappa C  \left( \Gamma(- \alpha)C \right)^{( \beta- \alpha-1)/ \alpha} \cdot  \frac{ \Gamma( \beta- \alpha -1)}{ \Gamma(( \beta- \alpha -1)/ \alpha)}\\
&=&  \kappa \cdot C^{( \beta-1)/ \alpha} \cdot \Gamma(- \alpha)^{( \beta- \alpha -1)/ \alpha}\cdot  \frac{ \Gamma( \beta- \alpha -1)}{ \Gamma(( \beta- \alpha -1)/ \alpha)}.
\end{eqnarray*}
This completes the proof of (ii).

The last assertion should now be of no difficulty. 
\end {proof}

Recall the two offspring distributions $ \mu^{\bullet}$ and $\mu^{\circ}_a$ introduced in Section \ref{section:twotype} and the bijection $ \mathcal{G}$ introduced in Section \ref{section:bijection}. Here we take $a=1/2$, and to simplify notation we set $\mu^{ \circ}=\mu^{ \circ}_{\text{\sfrac{1}{2}}}$. Recall finally that $ \nu$ is the image of $ \mathsf{GW}_{\mu^{\circ}_{a},\mu^{\bullet}}$ by $ \mathcal{G}$. In the previous sections, we have used the following corollary of Proposition \ref{prop:general}:

\begin{corollary} \label {cor:At}Let  $ \phi$ be the same function as in Proposition \ref {prop:phi}. We have the following two asymptotic behaviors:
\begin{eqnarray*}
\mathsf{GW}_{\mu^{ \circ},\mu^{\bullet}} \left(| \tau|=n \right)=\GWnu{| \tau|=n} & \displaystyle \mathop{ \sim}_{n \rightarrow \infty} &  \frac{3^{1/3}}{|\Gamma(-2/3)|} \cdot n^{-5/3} \\
 \GWE { \sum_{u \in  \bullet(\tau)} \phi(k_{u})  \mathbbm{1}_{| \tau|=n} }  & \displaystyle \mathop{ \sim}_{n \rightarrow \infty} & \frac{3^{1/6}}{  \Gamma(-2/3)^2 \cdot\sqrt{ 2\pi}} \cdot n^{1/3}.\end{eqnarray*}
\end {corollary}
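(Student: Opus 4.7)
The plan is to transport both quantities to the one-type tree through the Janson--Stef\'ansson bijection $\mathcal{G}$ of Proposition~\ref{prop:onetype}, and then invoke Proposition~\ref{prop:general} with $\rho = \nu$, $\alpha = 3/2$, and $C = \sqrt{3}/(4\sqrt{\pi})$ (from the tail estimate in \eqref{eq:nu}).

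For the first asymptotic, the bijection $\mathcal{G}$ preserves the total number of vertices, so $\mathsf{GW}_{\mu^{\circ},\mu^{\bullet}}(|\tau|=n) = \mathsf{GW}_{\nu}(|\tau|=n)$ follows from Proposition~\ref{prop:onetype}. Proposition~\ref{prop:general}(i) applied with the parameters above yields a prefactor $1/\big(|\Gamma(-2/3)| \cdot (\Gamma(-3/2)C)^{2/3}\big)$; using $\Gamma(-3/2) = 4\sqrt{\pi}/3$ one computes $\Gamma(-3/2)C = 1/\sqrt{3}$, hence $(\Gamma(-3/2)C)^{2/3} = 3^{-1/3}$, which produces the claimed constant $3^{1/3}/|\Gamma(-2/3)|$.

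For the second asymptotic, I would first rewrite the additive functional on the two-type side as one on the one-type side. Under $\mathcal{G}$, white vertices become leaves and a black vertex with $k_{u}$ children becomes a non-leaf vertex of $\tilde\tau := \mathcal{G}(\tau)$ with $k_{u}+1$ children. Consequently
\[
\sum_{u \in \bullet(\tau)} \phi(k_{u}) \;=\; \sum_{v \in \tilde\tau} \tilde\phi(k_{v}),
\qquad \tilde\phi(k) \;:=\; \phi(k-1)\,\mathbbm{1}_{k \geq 1},
\]
and the asymptotics of $\phi$ from Proposition~\ref{prop:phi} give $\tilde\phi(k) \sim \kappa\, k^{3}$ with $\kappa = \tfrac{4}{9}\sqrt{6/\pi}$. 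One is then in position to apply Proposition~\ref{prop:general}(iii) with $\alpha = 3/2$, $\beta = 3$, $C = \sqrt{3}/(4\sqrt{\pi})$, and $\kappa$ as above; since $(\beta-\alpha-1)/\alpha = 1/3$, this immediately yields the correct exponent $n^{1/3}$.

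The only real work is the bookkeeping of the constant. With the chosen parameters, the prefactor given by Proposition~\ref{prop:general}(iii) reads
\[
\kappa \cdot C^{2/3} \cdot \Gamma(-3/2)^{1/3} \cdot \frac{\Gamma(1/2)}{|\Gamma(-2/3)|\,\Gamma(1/3)}.
\]
Substituting $\Gamma(-3/2) = 4\sqrt{\pi}/3$, $\Gamma(1/2) = \sqrt{\pi}$, $C = \sqrt{3}/(4\sqrt{\pi})$, and $\kappa = \tfrac{4}{9}\sqrt{6/\pi}$, the $\sqrt{\pi}$-factors and powers of $2,3$ collapse; using the reflection formula $\Gamma(1/3)\Gamma(2/3) = \pi/\sin(\pi/3) = 2\pi/\sqrt{3}$ and $\Gamma(-2/3) = -\tfrac{3}{2}\Gamma(1/3)$ to absorb the remaining $\Gamma(1/3)$, one obtains exactly $3^{1/6}/\big(\Gamma(-2/3)^{2}\sqrt{2\pi}\big)$. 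This algebraic simplification is the only nontrivial step; the conceptual content is entirely contained in the bijection $\mathcal{G}$ and in Proposition~\ref{prop:general}.
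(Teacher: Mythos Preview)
Your approach is exactly the paper's: reduce to the one-type tree via the Janson--Stef\'ansson bijection $\mathcal{G}$ and then apply Proposition~\ref{prop:general} with $\rho=\nu$, $\alpha=3/2$, $\beta=3$, $C=\sqrt{3}/(4\sqrt{\pi})$ and $\kappa=\tfrac{4}{9}\sqrt{6/\pi}$. One slip to fix: in the displayed prefactor from Proposition~\ref{prop:general}(iii) the exponent of $\Gamma(-3/2)$ is $(\beta-\alpha-2)/\alpha=-1/3$, not $+1/3$; with the correct sign the $\pi$-bookkeeping indeed collapses to $3^{1/6}/\big(\Gamma(-2/3)^{2}\sqrt{2\pi}\big)$.
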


\begin{proof} By Proposition \ref{prop:onetype}, using the fact that $ \mathcal{G}$ maps black vertices of degree $k$ to vertices of degree $k+1$, it is sufficient to prove that
$$  \Esnu { \sum_{u \in \tau} \phi(k_{u}-1)  \mathbbm{1}_{| \tau|=n} }  \quad\mathop{ \sim}_{n \rightarrow \infty} \quad \frac{3^{1/6}}{  \Gamma(-2/3)^2 \cdot\sqrt{ 2\pi}} \cdot n^{1/3},$$
where by convention we set $ \phi(-1)=0$. Corollary \ref {cor:At} hence immediately follows from Proposition \ref{prop:general} (iii), applied with $C= \sqrt{3/ \pi}/4$, $ \kappa= {4}/{9} \cdot   \sqrt{{6}/{ \pi}}$, $ \alpha=3/2$ and $ \beta=3$.\end{proof}

\begin {proof}[Proof of Lemma \ref{lem:bornesousexp}]
We adapt the proof of Lemma 2.1 of \cite{Jan06b}. By the local limit theorem we may assume that $ N^{1/ \alpha}\leq k \leq N$. To simplify notation, set $G(z)=F(z)/z$. By \eqref{eq:hyp}, we have the following Taylor expansion around $z=1$: $G(z)=1+c (z-1)^ \alpha+o(|z-1|^ \alpha)
$ for $|z| \leq 1$.
In particular, 
\begin{equation}
\label{eq:taylor} \ln G(e^w)=c w^ \alpha+o(|w|^ \alpha), \qquad \mathsf{Re } \ w \leq 0.
\end{equation}
By integration around the circle of radius $ \exp(- \delta k^{ \alpha-1}/N)$ (for some small $ \delta>0$ to be chosen later):
\begin{eqnarray}
\P(S_{N}=N-k) &=& \frac{1}{2 i\pi} \oint z^{k-N} F(z)^N \frac{dz}{z} \notag\\
&=& \frac{1}{2 \pi} \int_{ - \pi}^ \pi\exp(- \delta k^{ \alpha}/N +ik t) G(e^{- \delta  k^{ \alpha-1}/N+it})^N dt \label{eq:expr}\end{eqnarray}
It is easy to check that, for $ \eta, t \geq 0$ small enough, $ \mathsf {Re}( (- \eta+it)^ \alpha) \leq  \eta^ \alpha- |\cos( \alpha \pi/2)| t^ \alpha$ and $|- \eta+it|^{ \alpha} \leq 2( \eta^ \alpha+t^ \alpha)$.
Thus, by \eqref{eq:taylor},
\begin{eqnarray*}
\ln|G(e^{- \delta  k^{ \alpha-1}/N+it})| &=& \mathsf{Re } \ln G(e^{- \delta  k^{ \alpha-1}/N+it})\\
&=&c \left( - \delta  k^{ \alpha-1}/N+it \right)^ \alpha+o \left(\left| - \delta  k^{ \alpha-1}/N+it \right|^ \alpha \right) \\
& \leq & c \delta^ \alpha  k^{ \alpha ( \alpha-1)}/N^{ \alpha}-c|\cos( \alpha \pi/2)| t^ \alpha+ o \left(  k^{ \alpha ( \alpha-1)}/N^{ \alpha}+ t^ \alpha\right)
\end{eqnarray*}
Since $k^{ \alpha( \alpha-1)}/ N^ { \alpha} \leq N^{ \alpha ( \alpha-1)}/ N^{ \alpha}= N^{- \alpha(2- \alpha)} \rightarrow 0$ as $N \rightarrow \infty$, there exist $ \delta_{0}, t_{0}>0$ sufficiently small such that for $ 0< \delta \leq  \delta_{0}$ and $|t| \leq t_{0}$:
\begin{equation}
\label{eq:t1}\ln|G(e^{- \delta  k^{ \alpha-1}/N+it})| \leq 2c \delta^ \alpha  k^{ \alpha ( \alpha-1)}/N^{ \alpha}-c|\cos( \alpha \pi/2)| t^ \alpha/2.
\end{equation}

Now, since the span of $ \xi$ is $1$, we have  $|F(z)| <1$ for $|z| \leq 1$ and $z \neq 1$, so by continuity and compactness, there exists $ \epsilon \in (0,1)$ such that $|F(re^{it})| \leq 1- \epsilon < e^{- \epsilon}$ when $e^{- \delta_{0}} \leq r \leq 1$ and $t_{0} \leq |t| \leq  \pi$. Thus, using the fact that $k^{ \alpha-1} \leq  N$, for $t_{0} \leq |t| \leq  \pi$ and $0 \leq  \delta \leq \delta_{1} := \min( \delta_{0}, \epsilon/2)$ we get:
\begin{equation}
\label{eq:t2}|G(e^{- \delta k^{ \alpha-1}/N+it})| =e^{\delta k^{ \alpha-1}/N}|F(e^{- \delta k^{ \alpha-1}/N+it})| \leq e^{ \delta} e^{- \epsilon} \leq  e^{- \epsilon/2}.
\end{equation}
Combining \eqref{eq:t1} and \eqref{eq:t2}, we get for $ \delta \leq  \delta_{1}$ and $|t| \leq  \pi$:
$$|G(e^{- \delta  k^{ \alpha-1}/N+it})| \leq e^{2c \delta^ \alpha  k^{ \alpha ( \alpha-1)}/N^{ \alpha}- c_{1} t^ \alpha}$$
with $c_{1}:= \min(c|\cos( \alpha \pi/2)|/2, \epsilon/(2 \pi^\alpha))$. Plugging this in
\eqref{eq:expr}, we get:
$$\P(S_{N}=N-k)  \leq e^{2c \delta^ \alpha  k^{ \alpha ( \alpha-1)}/N^{ \alpha-1}- \delta k^{ \alpha}/N } \int_{- \infty}^{ \infty} e^{-c_{1} Nt^ \alpha}dt.$$
The result follows by choosing $ \delta \leq 1/(2c)^{ \alpha-1}$. Indeed, for $\delta \leq 1/(2c)^{ \alpha-1}$, since $k \geq N^{1/ \alpha}$, we have
$$ \left( \frac{k^ \alpha}{N} \right)^{2- \alpha} \geq 2c \delta^{ \alpha-1}.$$
This completes the proof. \end {proof}
  

\begin{thebibliography}{10}

\bibitem{Ald91a}
{\sc D.~Aldous}, {\em The continuum random tree. {I}}, Ann. Probab., 19 (1991),
  pp.~1--28.

\bibitem{Ang03}
{\sc O.~Angel}, {\em Growth and percolation on the uniform infinite planar
  triangulation}, Geom. Funct. Anal., 13 (2003), pp.~935--974.

\bibitem{ACpercopeel}
{\sc O.~Angel and N.~Curien}, {\em Percolations on infinite random maps {I},
  half-plane models}, Ann. Inst. H. Poincar\'e Probab. Statist. (to appear).

\bibitem{AS03}
{\sc O.~Angel and O.~Schramm}, {\em Uniform infinite planar triangulations},
  Comm. Math. Phys., 241 (2003), pp.~191--213.

\bibitem{BCsubdiffusive}
{\sc I.~Benjamini and N.~Curien}, {\em Simple random walk on the uniform
  infinite planar quadrangulation: subdiffusivity via pioneer points}, Geom.
  Funct. Anal., 23 (2013), pp.~501--531.

\bibitem{Bil99}
{\sc P.~Billingsley}, {\em Convergence of probability measures}, Wiley Series
  in Probability and Statistics: Probability and Statistics, John Wiley \& Sons
  Inc., New York, second~ed., 1999.
\newblock A Wiley-Interscience Publication.

\bibitem{BBG12}
{\sc G.~Borot, J.~Bouttier, and E.~Guitter}, {\em More on the {O}(n) model on
  random maps via nested loops: loops with bending energy}, Journal of Physics
  A: Mathematical and Theoretical, 45 (2012), p.~275206.

\bibitem{BBG11}
{\sc G.~Borot, J.~Bouttier, and E.~Guitter}, {\em A recursive approach to the
  o(n) model on random maps via nested loops}, J. Phys. A: Math. Theor., 45
  (2012).

\bibitem{BBI01}
{\sc D.~Burago, Y.~Burago, and S.~Ivanov}, {\em A course in metric geometry},
  vol.~33 of Graduate Studies in Mathematics, American Mathematical Society,
  Providence, RI, 2001.

\bibitem{CD06}
{\sc P.~Chassaing and B.~Durhuus}, {\em Local limit of labeled trees and
  expected volume growth in a random quadrangulation}, Ann. Probab., 34 (2006),
  pp.~879--917.

\bibitem{CurKPZ}
{\sc N.~Curien}, {\em A glimpse of the conformal structure of random planar
  maps.}, Commun. Math. Phys. (to appear).

\bibitem{CHK13}
{\sc N.~Curien, B.~Haas, and I.~Kortchemski}, {\em The {CRT} is the scaling
  limit of random dissections}, Random Structures Algorithms (to appear).

\bibitem{CK13}
{\sc N.~Curien and I.~Kortchemski}, {\em Random stable looptrees}, Electron. J.
  Probab., 19 (2014), pp.~no. 108, 1--35.

\bibitem{CMM12}
{\sc N.~Curien, L.~M{\'e}nard, and G.~Miermont}, {\em A view from infinity of
  the uniform infinite planar quadrangulation}, ALEA Lat. Am. J. Probab. Math.
  Stat., 10 (2013), pp.~45--88.

\bibitem{Du03}
{\sc T.~Duquesne}, {\em A limit theorem for the contour process of conditioned
  {G}alton-{W}atson trees}, Ann. Probab., 31 (2003), pp.~996--1027.

\bibitem{DLG02}
{\sc T.~Duquesne and J.-F. Le~Gall}, {\em Random trees, {L}\'evy processes and
  spatial branching processes}, Ast\'erisque,  (2002), pp.~vi+147.

\bibitem{Fel71}
{\sc W.~Feller}, {\em An introduction to probability theory and its
  applications. {V}ol. {II}.}, Second edition, John Wiley \& Sons Inc., New
  York, 1971.

\bibitem{GJ83}
{\sc I.~P. Goulden and D.~M. Jackson}, {\em Combinatorial enumeration}, A
  Wiley-Interscience Publication, John Wiley \& Sons Inc., New York, 1983.
\newblock With a foreword by Gian-Carlo Rota, Wiley-Interscience Series in
  Discrete Mathematics.

\bibitem{GGN12}
{\sc O.~Gurel-Gurevich and A.~Nachmias}, {\em Recurrence of planar graph
  limits}, Ann. of Math. (2), 177 (2013), pp.~761--781.

\bibitem{IL71}
{\sc I.~A. Ibragimov and Y.~V. Linnik}, {\em Independent and stationary
  sequences of random variables}, Wolters-Noordhoff Publishing, Groningen,
  1971.
\newblock With a supplementary chapter by I. A. Ibragimov and V. V. Petrov,
  Translation from the Russian edited by J. F. C. Kingman.

\bibitem{Jan06b}
{\sc S.~Janson}, {\em Random cutting and records in deterministic and random
  trees}, Random Structures Algorithms, 29 (2006), pp.~139--179.

\bibitem{Jan12}
\leavevmode\vrule height 2pt depth -1.6pt width 23pt, {\em Simply generated
  trees, conditioned {G}alton-{W}atson trees, random allocations and
  condensation}, Probab. Surv., 9 (2012), pp.~103--252.

\bibitem{JS12}
{\sc S.~Janson and S.~O. Stef\'ansson}, {\em Scaling limits of random planar
  maps with a unique large face}, Ann. Probab. (to appear).

\bibitem{JS11}
{\sc T.~Jonsson and S.~O. Stef\'ansson}, {\em Condensation in nongeneric
  trees}, J. Stat. Phys., 142 (2011), pp.~277--313.

\bibitem{Ken75}
{\sc D.~P. Kennedy}, {\em The {G}alton-{W}atson process conditioned on the
  total progeny}, J. Appl. Probability, 12 (1975), pp.~800--806.

\bibitem{K12+}
{\sc I.~Kortchemski}, {\em Limit theorems for conditioned non-generic
  {G}alton-{W}atson trees}, Ann. Inst. H. Poincar\'e Probab. Statist. (to
  appear).

\bibitem{K11}
{\sc I.~Kortchemski}, {\em A simple proof of {D}uquesne's theorem on contour
  processes of conditioned {G}alton-{W}atson trees}, in S\'eminaire de
  {P}robabilit\'es {XLV}, vol.~2078 of Lecture Notes in Math., Springer, Cham,
  2013, pp.~537--558.

\bibitem{Kri05}
{\sc M.~Krikun}, {\em Local structure of random quadrangulations},
  arXiv:0512304.

\bibitem{Kri07}
\leavevmode\vrule height 2pt depth -1.6pt width 23pt, {\em Explicit enumeration
  of triangulations with multiple boundaries}, Electron. J. Combin., 14 (2007),
  pp.~Research Paper 61, 14 pp. (electronic).

\bibitem{LG05}
{\sc J.-F. Le~Gall}, {\em Random trees and applications}, Probab. Surv., 2
  (2005), pp.~245--311.

\bibitem{LG11}
\leavevmode\vrule height 2pt depth -1.6pt width 23pt, {\em Uniqueness and
  universality of the {B}rownian map}, Ann. Probab., 41 (2013), pp.~2880--2960.

\bibitem{LGLJ98}
{\sc J.-F. Le~Gall and Y.~Le~Jan}, {\em Branching processes in {L}\'evy
  processes: the exploration process}, Ann. Probab., 26 (1998), pp.~213--252.

\bibitem{LGM11}
{\sc J.-F. Le~Gall and G.~Miermont}, {\em Scaling limits of random planar maps
  with large faces}, Ann. Probab., 39 (2011), pp.~1--69.

\bibitem{MN13}
{\sc L.~M{\'e}nard and P.~Nolin}, {\em Percolation on uniform infinite planar
  maps}, Electron. J. Probab., 19 (2014), pp.~no. 78, 1--27.

\bibitem{Mie12}
{\sc G.~Miermont}, {\em Random maps and continuum random 2-dimensional
  geometries}, Proceedings of the 6th ECM, to appear.

\bibitem{Mie11}
\leavevmode\vrule height 2pt depth -1.6pt width 23pt, {\em The {B}rownian map
  is the scaling limit of uniform random plane quadrangulations}, Acta Math.,
  210 (2013), pp.~319--401.

\bibitem{Nev86}
{\sc J.~Neveu}, {\em Arbres et processus de {G}alton-{W}atson}, Ann. Inst. H.
  Poincar\'e Probab. Statist., 22 (1986), pp.~199--207.

\bibitem{PK11}
{\sc R.~B. Paris and D.~Kaminski}, {\em Asymptotics and {M}ellin-{B}arnes
  integrals}, vol.~85 of Encyclopedia of Mathematics and its Applications,
  Cambridge University Press, Cambridge, 2001.

\bibitem{Pit06}
{\sc J.~Pitman}, {\em Combinatorial stochastic processes}, vol.~1875 of Lecture
  Notes in Mathematics, Springer-Verlag, Berlin, 2006.
\newblock Lectures from the 32nd Summer School on Probability Theory held in
  Saint-Flour, July 7--24, 2002, With a foreword by Jean Picard.

\bibitem{Tut62}
{\sc W.~T. Tutte}, {\em A census of planar triangulations}, Canad. J. Math., 14
  (1962), pp.~21--38.

\end{thebibliography}

 \end{document}